\title{Galois quotients of metric graphs and invariant linear systems}
\author{Song JuAe \footnote{Tokyo Metropolitan University 1-1 Minami-Ohsawa, Hachioji, Tokyo, 192-0397, Japan. E-mail: song-juae@ed.tmu.ac.jp}}
\date{}
\newtheorem{dfn}{Definition}[subsection]
\newtheorem{thm}[dfn]{Theorem}
\newtheorem{prop}[dfn]{Proposition}
\newtheorem{cor}[dfn]{Corollary}
\newtheorem{lemma}[dfn]{Lemma}
\newtheorem{rem}[dfn]{Remark}
\newtheorem{prb}[dfn]{Problem}
\def\Gamma{\varGamma}
\def\Z{\boldsymbol{Z}}
\begin{document}
\maketitle

\begin{abstract}
For a map $\varphi : \Gamma \rightarrow \Gamma^{\prime}$ between metric graphs and an isometric action on $\Gamma$ by finite group $K$, $\varphi$ is a {\it $K$-Galois covering} on $\Gamma^{\prime}$ if $\varphi$ is a morphism, the degree of $\varphi$ coincides with the order of $K$ and $K$ induces a transitive action on every fibre.
We prove that for a metric graph $\Gamma$ with an isometric action by finite group $K$, there exists a rational map, from $\Gamma$ to a tropical projective space, which induces a $K$-Galois covering on the image.
By using this fact, we also prove that for a hyperelliptic metric graph without one valent points and with genus at least two, the invariant linear system of the hyperelliptic involution $\iota$ of the canonical linear system, the complete linear system associated to the canonical divisor, induces an $\langle \iota \rangle$-Galois covering on a tree.
This is an analogy of the fact that a compact Riemann surface  is hyperelliptic if and only if the canonical map, the rational map induced by the canonical linear system, is a double covering on a projective line $\boldsymbol{P}^1$.
\end{abstract}

{\bf keywords}: metric graph, invariant linear subsystem, rational map, Galois covering, hyperelliptic metric graph, canonical map

{\bf 2010 Mathematical Subject Classification}: 14T05, 15A80



\tableofcontents




\section{Introduction}

Tropical geometry is an algebraic geometry over tropical semifield $\boldsymbol{T} = (\boldsymbol{R} \cup \{ - \infty \}, {\rm max}, +)$.
A tropical curve is a one-dimensional object obtained from a compact Riemann surface by a limit operation called tropicalization and realized as a metric graph.
In this paper, a metric graph means a finite connected multigraph where each edge is identified with a closed segment of $\boldsymbol{T}$.
Exactly as a compact Riemann surface, concepts of a divisor, a rational function and a complete linear system etc.~are defined on a metric graph.
A morphism of metric graphs is a (finite) harmonic map.

In tropical geometry, a hyperelliptic metric graph, {\it {\it i.e.}} a metric graph with a special action by two element group is investigated in detail in \cite{Haase=Musiker=Yu}.
In this paper, we study a metric graph with an action by a finite group and develop a quotient metric graph in a tropical projective space as the image of a rational map.
Note that in this paper, we always suppose that an action by a finite group on a metric graph is isometric.

\begin{thm}[Theorem \ref{main theorem2}]
	\label{main theorem1}
Let $\Gamma$ be a metric graph and $K$ a finite group acting on $\Gamma$.
Then, there exists a rational map, from $\Gamma$ to a tropical projective space, which induces a $K$-Galois covering on the image.
\end{thm}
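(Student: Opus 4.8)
The plan is to exhibit the image of the rational map as the quotient of $\Gamma$ by $K$ and to let $\varphi$ induce the quotient morphism. A map $\varphi = (f_0 : \cdots : f_n) : \Gamma \to \boldsymbol{TP}^{n}$ is constant on $K$-orbits as soon as each coordinate $f_i$ is a $K$-invariant rational function; if in addition the $f_i$ separate distinct $K$-orbits, then $\varphi$ is injective on the orbit space and its image $\Gamma' = \varphi(\Gamma)$ is a model of $\Gamma / K$ on which $K$ acts transitively in each fibre of $\Gamma \to \Gamma'$. The whole theorem then reduces to producing such a $\varphi$ for which the corestriction $\Gamma \to \Gamma'$ is a finite harmonic morphism of degree exactly $|K|$.

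To build $K$-invariant rational functions I would symmetrise. Starting from a family of rational functions that separates points of $\Gamma$ --- for instance the distance functions $d(\cdot, p)$, each of which is continuous and piecewise linear with integer slopes, hence rational --- I form $K$-invariants by averaging over the group in the tropical operations: $\min_{k \in K} d(\cdot, kp)$ separates distinct orbits, while the additive symmetrisation $\sum_{k \in K} h \circ k$ of a rational function $h$ produces invariants whose slope along a locus fixed by a subgroup $K_x$ is multiplied by $|K_x|$. A suitable finite combination of such invariants yields coordinate functions $f_0, \dots, f_n$ that both separate $K$-orbits and carry the slope data needed in the next step.

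With $\varphi = (f_0 : \cdots : f_n)$ in hand, I would verify that $\Gamma \to \Gamma'$ is a finite harmonic morphism of degree $|K|$. Away from the fixed locus the action is free, each fibre has $|K|$ points, and $\varphi$ is locally an isometry onto $\Gamma'$ (dilation factor $1$), so the local degrees sum to $|K|$. It then remains to treat the points $x$ with nontrivial stabiliser $K_x$: here the directions at $x$ are permuted by $K_x$, and the dilation factor along a direction $\vec{w}$ must equal the order of its stabiliser in $K_x$, so that every downstairs direction receives the common local degree $m_\varphi(x) = |K_x|$; the fibre of $x$, of size $|K|/|K_x|$, then contributes total degree $|K|$, matching the free locus. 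Once harmonicity and the degree count are confirmed, $\Gamma \to \Gamma'$ is by definition a $K$-Galois covering, proving the theorem.

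The main obstacle is precisely the local analysis at the fixed locus. Over an edge fixed pointwise by a subgroup $K_x$ the fibre is a single point, so the morphism must have dilation factor $|K_x|$ there; consequently the induced metric on $\Gamma'$ is not the orbit metric of $\Gamma / K$ but rescales such edges, and the coordinate functions $f_i$ must be chosen with slopes of the correct magnitude along them --- this is exactly where the additive symmetrisation, rather than the distance functions alone, is needed. Verifying the balancing condition at branch points where $K_x$ has direction-orbits of different sizes, and checking that the chosen invariants realise the required dilation factors while still separating orbits, is the delicate technical core; once it is in place, transitivity on fibres together with the global degree $|K|$ yields the Galois-covering property.
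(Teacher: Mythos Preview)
Your overall strategy---produce $K$-invariant coordinate functions that separate $K$-orbits, and then check that the resulting map to the image is harmonic of degree $|K|$---is the right shape, and is close in spirit to what the paper does. But the concrete functions you propose are inconsistent with the dilation you say you need, and you explicitly postpone the step that carries all the content.

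Here is the specific clash. You want the dilation along a direction $\vec w$ at $x$ to be $|(K_x)_{\vec w}|$, which on an edge $e$ with pointwise stabiliser $K_e$ means dilation $|K_e|$. For a rational map $\phi=(f_0:\cdots:f_n)$ the dilation on $e$ equals the lattice length of the slope vector, i.e.\ $\gcd_i(\text{slope of }f_i\text{ on }e)$; so you need every coordinate $f_i$ to have slope divisible by $|K_e|$ on $e$. Your additive symmetrisations $\sum_{k\in K} h\circ k$ do have this property (grouping the sum by $K_e$-cosets shows each orbit point is counted $|K_e|$ times), but your min-distance invariants $\min_{k}d(\cdot,kp)=d(\cdot,Kp)$ do \emph{not}: a distance-to-a-finite-set function has slopes $\pm 1$ everywhere, so including even one such coordinate forces the gcd to be $1$ and the dilation to be $1$ on every edge, destroying exactly the balancing you describe as ``the delicate technical core''. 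Also, a single $\min_k d(\cdot,kp)$ does not separate distinct orbits; you would need a family of them over varying $p$, and each one you add further pins the dilation at $1$.

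The paper resolves this tension differently. It works inside the invariant linear system $R(D)^K$ for a $K$-invariant effective $D$ and proves two things: (i) with a generating set $F$ of $R(D)^K$, the map $\phi_F$ is a \emph{local isometry} (because among the generators one always finds slope $0$ and slope $1$ on each edge, so the gcd of slopes is $1$), and (ii) one can then equip the image with edge-multiplicities $m'([e])=|K_e|$ so that the weighted local degree is $|K_e|$ and the global degree is $|K|$. The existence of a $D$ for which $\phi_F$ separates $K$-orbits (``$K$-very ample'') is obtained by pushing $D$ to $\Gamma/K$, using ordinary ampleness there, and pulling back---this is exactly the ``work on the quotient and pull back'' manoeuvre you are gesturing at, but carried out inside the linear-system framework so that harmonicity is automatic.

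If you want to salvage your dilation-based picture without edge-multiplicities, the clean fix is to use \emph{only} coordinates pulled back from $\Gamma/K$ (these are precisely the $K$-invariant functions whose slope on each $e$ is a multiple of $|K_e|$); then separating $K$-orbits on $\Gamma$ amounts to separating points of $\Gamma/K$, which one gets from an embedding of $\Gamma/K$. That is essentially the paper's argument unwound, and it replaces the deferred ``delicate technical core'' with a one-line appeal to ampleness on the quotient.
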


Here, the definition of a $K$-Galois covering on a metric graph is given as follows.

\begin{dfn}[Definition \ref{branched $K$-Galois1}]
	\label{$K$-Galois}
{\upshape
Assume that a map $\varphi : \Gamma \rightarrow \Gamma^{\prime}$ between metric graphs and an action on $\Gamma$ by $K$ are given.
Then, $\varphi$ is a {\it $K$-Galois covering} on $\Gamma^{\prime}$ if $\varphi$ is a morphism of metric graphs, the degree of $\varphi$ coincides with the order of $K$ and the action on $\Gamma$ by $K$ induces a transitive action on every fibre by $K$.
}
\end{dfn}

For the proof of theorem \ref{main theorem1}, we use the following various results.

For a divisor $D$ on a metric graph $\Gamma$, $R(D)$ denotes the set of rational functions corresponding to the complete linear system $|D|$ together with a constant function of $- \infty$ on $\Gamma$, {\it {\it i.e.}} $R(D) := \{ f \,|\, f \text{ is a rational function other than } -\infty \text{ and } D + {\rm div}(f) \text{ is effective.}\} \cup \{ - \infty \}$.
$R(D)$ becomes a tropical semimodule over $\boldsymbol{T}$ ({\cite[Lemma $4$]{Haase=Musiker=Yu}}).

\begin{thm}[{\cite[Theorem $6$]{Haase=Musiker=Yu}}]
	\label{HMY}
$R(D)$ is finitely generated.
\end{thm}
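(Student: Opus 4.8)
The plan is to reduce this statement, which a priori concerns an infinite-dimensional semimodule of functions on the continuum $\Gamma$, to a finite polyhedral problem, and then to read off a finite generating set from the vertices of the resulting cell structure. Throughout I write $d := \deg D$ and use the tropical operations $f \oplus g = \max(f,g)$ and $a \odot f = a + f$, under which $R(D)$ is already known to be a semimodule over $\boldsymbol{T}$.

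First I would bound the geometry of the functions in $R(D)$. For $f \in R(D) \setminus \{-\infty\}$, the effectivity condition $D + {\rm div}(f) \geq 0$ forces, at each point $p$, the jump in the slope of $f$ to be at least $-D(p)$; thus the slope of $f$ is nondecreasing along $D$-free edge interiors and can decrease only at the finitely many points of $\mathrm{supp}(D)$, with total decrease controlled by $d$. Since the slopes are integers lying in $[-d,d]$, every $f$ is $d$-Lipschitz. After normalizing by $\max_\Gamma f = 0$, that is, replacing $f$ by $f \odot (-\max_\Gamma f)$, which changes neither the $\boldsymbol{T}$-class of $f$ nor membership in $R(D)$, the normalized functions form a uniformly bounded, equicontinuous, hence relatively compact family in $C(\Gamma)$.

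Next I would install a finite polyhedral structure. Because the slopes take values in the finite set of integers in $[-d,d]$ and can decrease only over $\mathrm{supp}(D)$, the number of breakpoints of any $f \in R(D)$ is bounded independently of $f$. Consequently there are only finitely many combinatorial types $\tau$, each recording, on a fixed model of $\Gamma$, which edges carry breakpoints and the sequence of integer slopes between them. For a fixed type $\tau$, the normalized functions of that type are determined by the finitely many breakpoint positions subject to linear inequalities, so they fill a polytope $P_\tau$, and the parameter-to-function map is affine; identifying $f$ with $D + {\rm div}(f)$ exhibits the normalized slice of $R(D)$ as a finite union of such cells, i.e.\ as the complete linear system $|D|$ endowed with a compact polyhedral-complex structure.

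Finally I would produce the generators: on each cell I take the finitely many vertex functions, and I claim every $f$ lies in the tropical span $\{\max_i(a_i \odot f_i)\}$ of the union of these finite vertex sets, together with $-\infty$. The main obstacle I anticipate lies precisely here, in reconciling ordinary convexity with the tropical operation $\max$. A point of $P_\tau$ is an ordinary convex combination of the vertices of $P_\tau$, but the cell need not coincide with the \emph{tropical} convex hull of the same functions, so the naive vertex set may fail to span tropically. To repair this I would refine the decomposition so that each piece is genuinely a tropical polytope, equivalently verify that on each cell a tropical combination of functions of a common type is again a rational function of that type, and then recover interior functions as max-plus combinations of the tropically extremal ones. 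Establishing this tropical convexity and the finiteness of the tropically extremal functions, rather than the routine slope bound of the first step, is where the real work sits.
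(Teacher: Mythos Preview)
Your outline is sound through the polyhedral decomposition, and you correctly identify that the real difficulty is the passage from ordinary to tropical convexity. But you have named the gap without closing it: ``refine the decomposition so that each piece is a tropical polytope'' and ``recover interior functions as max-plus combinations of the tropically extremal ones'' is precisely the content of the theorem, and you offer no mechanism. The vertices of your cells $P_\tau$ (ordinary-convexity vertices) are not in general the tropical extremals of $R(D)$, and a tropical extremal need not sit at any cell vertex; an independent, intrinsic characterization of the extremals together with a direct finiteness argument for them is required, and this is exactly what is missing from your proposal.

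The paper does not itself prove the theorem---it is cited from Haase--Musiker--Yu---but it quotes the decisive lemma (their Lemma~6, recorded here in Remark~\ref{finitely generation of R(D)}): let $S$ be the set of $f \in R(D)$ for which $\mathrm{supp}(D + \mathrm{div}(f))$ contains no cut set of $\Gamma$ made up entirely of smooth points. One shows that (i)~every extremal lies in $S$, since if $\mathrm{supp}(D + \mathrm{div}(f))$ contains such a smooth cut set one can split $\Gamma$ along it into two proper subgraphs each of which can fire, yielding $f = g_1 \oplus g_2$ nontrivially; (ii)~$S$ is finite modulo scaling, because the absence of smooth cut sets pins the support of $D + \mathrm{div}(f)$ to finitely many combinatorial configurations relative to a fixed model; and (iii)~$S$ therefore generates $R(D)$. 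This route bypasses your cell decomposition entirely and supplies exactly the ingredient you lack: a concrete, graph-theoretic description of the tropical extremals from which finiteness is immediate. Your program could in principle be completed along the lines you sketch, but the step you defer is the entire substance of the proof.
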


$|D|$ is also finitely generated since $|D|$ is identified with the projection of $R(D)$.
In this paper, we show the following theorem such that we consider a finite group on Theorem \ref{HMY}.

\begin{thm}[Remark \ref{$R(D)^K$ is a tropical semimodule.}, Theorem \ref{main theorem'} and Theorem \ref{main theorem''}]
	\label{$R(D)^K$ is finitely generated1.}
Let $\Gamma$ be a metric graph, $K$ a finite group acting on $\Gamma$ and $D$ a $K$-invariant effective divisor on $\Gamma$.
Then, the set $R(D)^K$ consisting of all $K$-invariant rational functions in $R(D)$ becomes a tropical semimodule and is finitely generated.
\end{thm}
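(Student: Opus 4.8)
The plan is to realize $R(D)^K$ as the \emph{image} of a single, explicitly defined symmetrization homomorphism out of $R(D)$, and then to transport the finite generation of $R(D)$ (Theorem \ref{HMY}) along this homomorphism. I would not try to argue directly inside $R(D)^K$, because a sub-semimodule of a finitely generated tropical semimodule need not be finitely generated.

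First I would record the $K$-action on $R(D)$ and dispatch the semimodule claim. Since $K$ acts isometrically on $\Gamma$, each $\sigma \in K$ acts on rational functions by $(\sigma \cdot f)(x) := f(\sigma^{-1}x)$, and one checks that $\operatorname{div}(\sigma \cdot f) = \sigma_{*}\operatorname{div}(f)$, where $\sigma_{*}$ transports a divisor by $\sigma$. Because $D$ is $K$-invariant, $D + \operatorname{div}(\sigma\cdot f) = \sigma_{*}\bigl(D + \operatorname{div}(f)\bigr) \ge 0$, so $\sigma \cdot f \in R(D)$ whenever $f \in R(D)$, and $K$ acts on $R(D)$. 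A function is $K$-invariant precisely when $\sigma \cdot f = f$ for every $\sigma$. For $f,g \in R(D)^K$ and $a \in \boldsymbol{T}$ the identities $\sigma \cdot \max(f,g) = \max(\sigma\cdot f,\sigma\cdot g)$ and $\sigma \cdot (a+f) = a + \sigma\cdot f$ show that $\max(f,g)$ and $a+f$ are again $K$-invariant; as they already lie in the tropical semimodule $R(D)$, the set $R(D)^K$ is a sub-semimodule, which handles the first assertion.

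The key step is to introduce the \emph{tropical average over the orbit}
\[
\Phi : R(D) \longrightarrow R(D), \qquad \Phi(f) := \max_{\sigma \in K}\,(\sigma \cdot f).
\]
I would verify three points. First, $\Phi(f) \in R(D)^K$: it lies in $R(D)$ because $R(D)$ is closed under $\max$ and each $\sigma\cdot f \in R(D)$, and it is $K$-invariant because for $\tau \in K$ the index substitution $\sigma \mapsto \tau\sigma$ permutes $K$, so $\tau\cdot\Phi(f) = \max_{\sigma}(\tau\sigma\cdot f) = \Phi(f)$. Second, $\Phi$ is a homomorphism of tropical semimodules: both $\Phi(\max(f,g)) = \max(\Phi(f),\Phi(g))$ and $\Phi(a+f) = a+\Phi(f)$ follow by interchanging the outer $\max$ over $K$ with $\max$ and with the scalar $+\,a$. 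Third, $\Phi$ restricts to the identity on $R(D)^K$, since for $K$-invariant $h$ each $\sigma\cdot h$ equals $h$. The first and third points together give that the image of $\Phi$ is exactly $R(D)^K$.

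To finish, by Theorem \ref{HMY} the semimodule $R(D)$ has finitely many generators $g_1,\dots,g_n$, so every $f\in R(D)$ is $f = \max_i(a_i + g_i)$ for some $a_i \in \boldsymbol{T}$. Applying $\Phi$ and using its compatibility with $\max$ and scalar addition yields $\Phi(f) = \max_i\bigl(a_i + \Phi(g_i)\bigr)$, and since $\Phi$ is onto $R(D)^K$ this shows $R(D)^K$ is generated by $\Phi(g_1),\dots,\Phi(g_n)$; this is presumably where an explicit generating set (and any sharpening of the count) enters via the two theorems cited in the statement. I expect the genuine obstacle to be conceptual rather than computational: the argument must route through the surjective homomorphic image, and the averaging must be carried out with $\max$ (tropical addition), for which $R(D)$ is closed, rather than with $\min$, which would leave $R(D)$. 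Checking that this single $\max$-symmetrization is at once $K$-invariant, $R(D)$-valued, and a semimodule homomorphism is the crux of the whole proof.
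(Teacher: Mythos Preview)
Your argument is correct and coincides with the paper's second proof (Theorem~\ref{main theorem''}): the paper defines $g_i := \text{``}\sum_{\sigma\in K} f_i\circ\sigma\text{''}$ for a minimal generating set $\{f_1,\dots,f_n\}$ of $R(D)$ and checks directly that any $g\in R(D)^K$ satisfies $g=\text{``}\sum_\sigma g\circ\sigma\text{''}=\text{``}\sum_i a_i g_i\text{''}$, which is exactly your computation $\Phi(f)=\max_i(a_i+\Phi(g_i))$ unwound. Your packaging of the symmetrization as a tropical semimodule homomorphism $\Phi$ with $\Phi|_{R(D)^K}=\mathrm{id}$ is a slightly cleaner way to say the same thing. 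Note that the paper also records an independent proof (Theorem~\ref{main theorem'}): writing $D=D_1+D_2$ with $D_1$ the maximal $K$-invariant part, one has $R(D)^K=R(D_1)^K$ (Lemma~\ref{important lemma}), and then the Haase--Musiker--Yu cut-set argument applied to the $K$-invariant divisor $D_1$ shows $S_G(D_1)^K$ is finite and generates; this route gives a more geometric description of the extremals but does not bound the number of generators by that of $R(D)$, which your approach (and Theorem~\ref{main theorem''}) does.
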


We can show that the set $|D|^K$ consisting of all $K$-invariant divisors in $|D|$ is identified with the projection of $R(D)^K$.
Thus, the $K$-invariant linear system $|D|^K$ is also finitely generated.
Let $\phi_{|D|^K}$ be the rational map, from $\Gamma$ to a tropical projective space, associated to $|D|^K$.
Then, the following holds.

\begin{thm}[Theorem \ref{If $K$-injective, then $K$-Galois}]
	\label{Galois}
$\phi_{|D|^K}$ induces a $K$-Galois covering on ${\rm Im}(\phi_{|D|^K})$ if and only if $\phi_{|D|^K}$ maps distinct $K$-orbits to distinct points.
\end{thm}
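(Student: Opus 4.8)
The plan is to compare the fibres of $\phi_{|D|^K}$ with the $K$-orbits on $\Gamma$. The essential structural fact, which I would record first, is that $\phi_{|D|^K}$ is $K$-invariant: its homogeneous coordinates are elements of $R(D)^K$, hence are constant on each $K$-orbit, so for every $g \in K$ and every $p \in \Gamma$ one has $\phi_{|D|^K}(g\cdot p) = \phi_{|D|^K}(p)$. Consequently each $K$-orbit is contained in a single fibre of $\phi_{|D|^K}$, and the whole equivalence reduces to the question of when, conversely, each fibre is contained in a single orbit.

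For the forward implication, suppose $\phi_{|D|^K}$ induces a $K$-Galois covering on its image. By Definition~\ref{$K$-Galois} the action of $K$ on $\Gamma$ is transitive on every fibre, so any two points lying in a common fibre belong to one $K$-orbit; thus each fibre is contained in an orbit. Combined with the inclusion from the previous paragraph, the fibres of $\phi_{|D|^K}$ are exactly the $K$-orbits, and in particular distinct orbits are sent to distinct points, as claimed.

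For the backward implication, assume $\phi_{|D|^K}$ separates distinct $K$-orbits. Then whenever $\phi_{|D|^K}(p) = \phi_{|D|^K}(q)$ the points $p$ and $q$ lie in the same orbit, so each fibre is contained in a single orbit; with the opposite inclusion this again identifies the fibres with the $K$-orbits. This identification gives at once the transitivity of the $K$-action on every fibre, which is the last requirement of Definition~\ref{$K$-Galois}. It therefore remains to verify the two metric conditions: that $\phi_{|D|^K}$ is a morphism of metric graphs onto ${\rm Im}(\phi_{|D|^K})$, and that $\deg \phi_{|D|^K} = |K|$. For the former I would endow the image with a metric-graph structure and check that $\phi_{|D|^K}$ is a finite harmonic map, analysing the piecewise-linear behaviour of a finite generating set of $R(D)^K$ near each point and using that $K$ acts by isometries, so that the local models at the points of a single orbit coincide. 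For the degree I would evaluate the defining sum $\deg \phi_{|D|^K} = \sum_{x \in \phi_{|D|^K}^{-1}(y)} m(x)$ at a generic point $y$ of the image: there the corresponding orbit is free, since $K$ acts faithfully and the fixed locus of each nontrivial $g \in K$ is a proper polyhedral subset of $\Gamma$, so the action is generically free; such a generic orbit has exactly $|K|$ points, each of local degree one, giving $\deg \phi_{|D|^K} = |K|$.

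The main obstacle is precisely this last verification, since the separation hypothesis is purely set-theoretic whereas being a $K$-Galois covering is a metric-geometric condition; the work lies in promoting the bijection ``fibres $=$ orbits'' to a finite harmonic map of degree exactly $|K|$. The two delicate points are to rule out contracted edges, so that $\phi_{|D|^K}$ is genuinely finite, and to confirm that the local dilation factors sum to $|K|$ on every fibre, including the non-free orbits over branch points, where the increase in local degree must exactly offset the drop in orbit size. The isometry of the $K$-action and the $K$-invariance of $D$ are the crucial inputs here, as they force the local picture to be constant along each orbit and tie the local degree of $\phi_{|D|^K}$ at a point to the stabiliser of that point.
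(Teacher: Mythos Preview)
Your forward implication matches the paper's (it is Lemma~\ref{$K$-Galois is $K$-injective} there) and is complete.

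The backward implication, however, is only a plan: you correctly isolate the two substantive tasks --- ruling out contracted edges and establishing harmonicity with degree $|K|$ --- but you do not carry them out, and your final paragraph essentially concedes this. The paper's argument is concrete and sharper than what you anticipate. The crucial step is to show that $\phi_{|D|^K}$ is a \emph{local isometry} on every edge (dilation factor exactly $1$), not merely finite. This is done in two lemmas. First (Lemma~\ref{K-orbit lemma}), $K$-injectivity forces, for every $x \in \Gamma$, the existence of some $f \in R(D)^K$ with $Kx \subset \mathrm{supp}(D + \mathrm{div}(f))$; otherwise one shows every generator is locally constant near $x$, so a whole neighbourhood of $x$ collapses to a point, contradicting $K$-injectivity. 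Second (Lemma~\ref{slope one lemma}), this condition guarantees that on every edge some generator has slope exactly $1$: if all nonconstant generators had slope $\ge 2$ on an edge, a truncation-and-chip-firing construction produces a function in $R(D)^K$ with slope $1$ there, a contradiction. Together with the constant functions (slope $0$), the gcd of the slopes on each edge is $1$, so the lattice-length distance between the images of the endpoints equals the edge length.

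Once the local isometry is in hand, the situation is simpler than you expect: the dilation factor does \emph{not} increase at branch points to compensate for smaller orbits. Instead the paper equips the image with edge-multiplicities $m'(e') = |K_e|$ (edge-stabiliser size) and the source with the trivial multiplicity $\mathbf{1}$, and it is in this weighted sense that the map is harmonic of degree $|K|$. Your generic-free-orbit argument for the degree is correct as a consistency check but does not by itself handle the branch locus; the uniform local-isometry statement is what makes the computation go through at every point, and it simultaneously disposes of the contracted-edge issue.
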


For each edge of ${\rm Im}(\phi_{|D|^K})$, a natural measure defined by the $\boldsymbol{Z}$-affine structure of the tropical projective space is induced.
In the proof of theorem \ref{Galois}, it is essential to show that $\phi_{|D|^K}$ is a local isometry for the edge length defined from this measure.
For the fact that generally, the rational map defined by a finite number of rational functions on a metric graph may not induce a morphism of metric graphs since the rational map may not be harmonic, Theorem \ref{Galois} states that the rational map induced by a $K$-invariant linear system induces a morphism of metric graphs if it satisfies the condition in Theorem \ref{Galois}.
Moreover, we show the following theorem.

\begin{thm}[Theorem \ref{$K$-ample1}]
	\label{$K$-ample}
There exists a $K$-invariant effective divisor $D$ on a metric graph with an action of a finite group $K$ such that $\phi_{|D|^K}$ maps distinct $K$-orbits to distinct points.
\end{thm}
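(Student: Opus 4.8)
The plan is to reduce the statement, by means of Theorem \ref{Galois}, to the purely separative assertion that there is a $K$-invariant effective divisor $D$ for which $\phi_{|D|^K}$ separates distinct $K$-orbits, and then to build such a $D$ out of finitely many invariant distance functions. First I record a convenient reformulation of the separation condition. Let $f_0, \dots, f_n$ be generators of the tropical semimodule $R(D)^K$, so that $\phi_{|D|^K}(z) = (f_0(z) : \cdots : f_n(z))$. Since $D$ is effective the constant function lies in $R(D)^K$, and every element of $R(D)^K$ is a tropical combination $\max_i (a_i + f_i)$ of the generators; a direct computation then shows that $\phi_{|D|^K}(x) = \phi_{|D|^K}(y)$ if and only if $f(x) = f(y)$ for every $f \in R(D)^K$. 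Consequently $\phi_{|D|^K}$ maps distinct orbits $Kx \neq Ky$ to distinct points exactly when there exists $f \in R(D)^K$ with $f(x) \neq f(y)$. I also note the monotonicity $R(D)^K \subseteq R(D')^K$ whenever $D \leq D'$, so that enlarging a $K$-invariant effective divisor never destroys a separating function.

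Next I would produce separating invariant functions. For a point $p \in \Gamma$ let $f_p(z) := \operatorname{dist}(z, Kp)$ be the distance from $z$ to the orbit $Kp$. Because $K$ acts by isometries, $f_p$ is $K$-invariant, and because $\Gamma$ is a finite metric graph, $f_p$ is continuous and piecewise linear with integer slopes, hence a rational function in the sense of the paper; moreover $f_p(z) = 0$ precisely on $Kp$. Thus $f_p$ separates $p$ from every point lying in a different orbit, however close. Its divisor $\operatorname{div}(f_p)$ is $K$-invariant, so its polar (negative) part $E_p := (\operatorname{div} f_p)_-$ is a $K$-invariant effective divisor satisfying $E_p + \operatorname{div}(f_p) = (\operatorname{div} f_p)_+ \geq 0$, that is $f_p \in R(E_p)^K$.

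It remains to pass from one pair at a time to a single finite family separating all pairs simultaneously. Here I would invoke that the quotient $\Gamma/K$, with its quotient length metric, is again a finite metric graph and therefore has finite metric dimension: there is a finite set of orbits $Kx_1, \dots, Kx_m$ such that the map $z \mapsto (\operatorname{dist}(z, Kx_1), \dots, \operatorname{dist}(z, Kx_m))$ has fibres exactly the $K$-orbits, equivalently the functions $f_{x_1}, \dots, f_{x_m}$ jointly separate any two distinct orbits. This is the step I expect to be the main obstacle, since it requires a single finite resolving set for the whole continuum of points of $\Gamma/K$ at once, and care must be taken along the ramification locus (fixed points of nontrivial stabilizers), where the quotient edge lengths are rescaled and one must still check that the distance-to-orbit functions separate nearby distinct orbits; I would establish it by fixing a finite $K$-invariant model and choosing base orbits that resolve each edge and distinguish the edges from one another.

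Finally I would assemble the divisor. Set $D := \sum_{j=1}^m E_{x_j}$, a $K$-invariant effective divisor with $D \geq E_{x_j}$ for each $j$, so that every $f_{x_j}$ lies in $R(E_{x_j})^K \subseteq R(D)^K$ by monotonicity. Given distinct orbits $Kx \neq Ky$, the resolving property furnishes an index $j$ with $f_{x_j}(x) \neq f_{x_j}(y)$; by the reformulation of the first step this yields $\phi_{|D|^K}(x) \neq \phi_{|D|^K}(y)$. Hence $\phi_{|D|^K}$ maps distinct $K$-orbits to distinct points, which is exactly the assertion, and, combined with Theorem \ref{Galois}, shows that this $D$ realizes a $K$-Galois covering on $\operatorname{Im}(\phi_{|D|^K})$.
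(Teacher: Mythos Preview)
Your construction is plausible and genuinely different from the paper's, but the step you yourself flag as the main obstacle is a real gap: you need a finite set of orbits $Kx_1,\dots,Kx_m$ whose distance functions $f_{x_j}(z)=\operatorname{dist}(z,Kx_j)$ jointly separate \emph{all} pairs of distinct $K$-orbits, and this ``finite resolving set'' claim for a metric graph, while true, requires an argument you only sketch. Your framing via the metric on $\Gamma/K$ is also slightly off, since in the paper the quotient edge lengths are rescaled by stabilizer orders, so $\operatorname{dist}_\Gamma(z,Kp)$ does not descend to the distance function on $\Gamma/K$; fortunately your argument only actually uses that the $f_p$ are $K$-invariant rational functions on $\Gamma$, which they are. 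The reformulation in your first paragraph and the monotonicity/assembly steps are correct.

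The paper avoids the resolving-set issue entirely and proves more: it shows that \emph{every} $K$-invariant effective divisor of positive degree is $K$-ample, not merely that one exists. The mechanism is push--pull along the quotient $\pi:\Gamma\to\Gamma/K$. Since $\operatorname{deg}\pi_*(D)=\operatorname{deg}D>0$, the cited result of Haase--Musiker--Yu (every divisor of positive degree is ample) makes some multiple $k\,\pi_*(D)$ very ample on $\Gamma/K$; separating functions $f'_1,f'_2\in R(k\,\pi_*(D))$ are then pulled back, and the identity $\pi^*\pi_*(D)=|K|\,D$ (from $K$-invariance of $D$ and orbit--stabilizer) places $\pi^*f'_i\in R(k|K|D)^K$, where they still separate any two points in distinct $K$-orbits because $\pi$ itself is $K$-injective. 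This outsources the entire separation problem to the already-established ampleness on the quotient, and yields the stronger ``every'' in place of your ``there exists''.
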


In conclusion, we obtain Theorem \ref{main theorem1}.
Especially when the group $K$ is trivial, we have the following corollary.

\begin{cor}[Corollary \ref{embedded in a tropical projective space1}]
	\label{embedded in a tropical projective space}
A metric graph is embedded in a tropical projective space by a rational map.
\end{cor}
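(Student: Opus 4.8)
The plan is to deduce this statement from Theorem~\ref{main theorem1} --- equivalently, from Theorems~\ref{$K$-ample} and~\ref{Galois} --- by specializing to the case in which $K$ is the trivial group $\{ e \}$. First I would observe that $\{ e \}$ acts on any metric graph $\Gamma$ by the identity, which is an isometric action, so the hypotheses of the cited results are satisfied for this choice of $K$. Under this specialization the $K$-orbits of $\Gamma$ are exactly the one-point sets $\{ x \}$, every rational function is $K$-invariant, and consequently $R(D)^K = R(D)$ and $|D|^K = |D|$ for every effective divisor $D$, so that $\phi_{|D|^K} = \phi_{|D|}$.

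Next I would invoke Theorem~\ref{$K$-ample} with $K = \{ e \}$ to obtain an effective divisor $D$ on $\Gamma$ for which $\phi_{|D|}$ sends distinct $K$-orbits to distinct points. Because the $K$-orbits are now singletons, this condition is precisely the statement that $\phi_{|D|}$ is injective on the points of $\Gamma$. By Theorem~\ref{Galois}, this injectivity is equivalent to $\phi_{|D|}$ inducing a $\{ e \}$-Galois covering on ${\rm Im}(\phi_{|D|})$.

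It then remains to unwind Definition~\ref{$K$-Galois} in this case. A $\{ e \}$-Galois covering $\varphi := \phi_{|D|}$ is a morphism of metric graphs whose degree equals $|\{ e \}| = 1$; the transitivity requirement is automatic, since the trivial group acts transitively on a fibre precisely when that fibre is a single point, and a morphism of degree one has a single preimage (counted with local degree) over each point of its image. Hence $\varphi$ is injective and its local degree is identically one. Since a finite harmonic morphism multiplies each edge length by the local degree, $\varphi$ is a local isometry onto ${\rm Im}(\phi_{|D|})$ for the edge lengths induced by the natural measure discussed before Theorem~\ref{Galois}; an injective local isometry is an isometric embedding, which yields the corollary.

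The only point requiring care is the final one --- that injectivity together with the harmonic, degree-one structure gives a genuinely \emph{isometric} embedding and not merely a continuous injection --- but this is already furnished by the local-isometry analysis carried out in the proof of Theorem~\ref{Galois}. Thus the corollary follows at once from the trivial-group specialization, and no work beyond interpreting the definitions is needed.
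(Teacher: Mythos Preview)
Your proposal is correct and follows the same route as the paper: the corollary is stated immediately after Theorem~\ref{main theorem2} with the sentence ``Especially when the group $K$ is trivial, we have the following corollary,'' and no further proof is given. Your write-up simply spells out this specialization in detail --- that $K$-orbits become singletons, that $\phi_{|D|}$ is then injective, and that the local-isometry computation in the proof of Theorem~\ref{If $K$-injective, then $K$-Galois} upgrades this to an isometric embedding --- so nothing substantive differs.
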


For a canonical map, which is the rational map induced by a canonical linear system, Haase--Musiker--Yu\cite{Haase=Musiker=Yu} showed the following theorem.

\begin{thm}[{\cite[Theorem $49$]{Haase=Musiker=Yu}}]
	\label{HMY2}
A metric graph whose canonical map is not injective is hyperelliptic.
\end{thm}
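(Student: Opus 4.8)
The plan is to reduce the statement to the rank-theoretic characterization of hyperellipticity by means of the tropical Riemann--Roch theorem. Write $K_\Gamma$ for the canonical divisor and $\phi := \phi_{|K_\Gamma|}$ for the canonical map. Suppose $\phi$ is not injective, so that there are distinct points $p, q \in \Gamma$ with $\phi(p) = \phi(q)$. By the description of the map into tropical projective space, this means exactly that there is a constant $c$ with $f(p) - f(q) = c$ for every $f \in R(K_\Gamma) \setminus \{-\infty\}$. The goal is to deduce $r(p+q) \ge 1$; since $\deg(p+q) = 2$ and (for genus $g \ge 2$) a degree-two divisor of rank equal to its degree would force $\Gamma$ to be a tree, we in fact get $r(p+q) = 1$, which is precisely the defining property of a hyperelliptic metric graph.

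First I would carry out the Riemann--Roch reduction. By the tropical Riemann--Roch theorem, $r(p+q) - r(K_\Gamma - p - q) = \deg(p+q) - g + 1 = 3 - g$, so it suffices to prove $r(K_\Gamma - p - q) \ge g - 2$. Since $r(K_\Gamma) = g - 1$ and the canonical system has no base points (using that $\Gamma$ has no one-valent points and $g \ge 2$), subtracting a single point drops the rank by exactly one, giving $r(K_\Gamma - p) = g - 2$. As $r(K_\Gamma - p - q) \le r(K_\Gamma - p)$ always holds, the entire theorem comes down to showing that subtracting the second point $q$ does not lower the rank further, i.e. $r(K_\Gamma - p - q) = r(K_\Gamma - p)$.

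The heart of the proof is therefore the following separation lemma: if $\phi(p) = \phi(q)$, then $q$ is a base point of the residual linear system $|K_\Gamma - p|$, equivalently $r(K_\Gamma - p - q) = r(K_\Gamma - p)$. Intuitively, failure to separate $p$ from $q$ means that every effective canonical divisor passing through $p$ is forced to pass through $q$ as well. To make this precise I would unwind the rank in terms of the semimodule $R(K_\Gamma)$: starting from an $f$ with $K_\Gamma + \mathrm{div}(f) \ge p + E$ for an effective $E$ of degree $g-2$, I would use the hypothesis $f(p) - f(q) = c$ for all canonical $f$ to promote the prescribed zero at $p$ to a simultaneous zero at $q$, producing an $f'$ with $K_\Gamma + \mathrm{div}(f') \ge p + q + E$. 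The mechanism is that $\phi(p) = \phi(q)$ forces the local slope and multiplicity profile of every effective canonical divisor at $p$ to be matched at $q$, so a canonical divisor through $p$ can be transported, by a chip-firing move supported along a path from $p$ to $q$, to one through both. I would control this using reduced divisors and Dhar's burning algorithm, which govern exactly when a point is forced into the base locus of a complete linear system on a metric graph.

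The step I expect to be the main obstacle is precisely this separation lemma, and the difficulty is genuinely tropical. Over a field the analogous statement is immediate linear algebra: the canonical sections form a vector space, non-separation of $p$ and $q$ says the two cutting conditions coincide, and one subtracts sections freely. Here $R(K_\Gamma)$ is only a tropical semimodule, so there is no subtraction, and the passage from a function vanishing at $p$ to one vanishing at both $p$ and $q$ must be executed by an explicit chip-firing/Laplacian argument rather than by solving a linear system; handling higher-order forced vanishing, and checking that the single constant $c$ is consistent across all generators simultaneously, is where the care lies. Once the separation lemma is established, the chain $r(p+q) = 1 \Rightarrow \Gamma$ carries a degree-two divisor of rank one $\Rightarrow \Gamma$ is hyperelliptic closes the argument; in the morphism formulation, the rank-one degree-two class yields a degree-two harmonic morphism from $\Gamma$ onto a tree, which is the defining property of a hyperelliptic metric graph.
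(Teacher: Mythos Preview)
The paper does not itself prove this statement; it is quoted from Haase--Musiker--Yu, and, as the paper remarks just after stating it and as is visible in the proof of Theorem~\ref{canonical map1}, their argument proceeds by explicit classification: they determine precisely which metric graphs (without one-valent points) have a non-injective canonical map, listing concrete combinatorial types, and observe that each such type is hyperelliptic. Your route is the classical complex-analytic one transplanted to the tropical setting---reduce via Riemann--Roch to $r(K_\Gamma-p-q)=r(K_\Gamma-p)$ and then invoke a separation lemma asserting that $q$ is a base point of $|K_\Gamma-p|$. That is a genuinely different strategy and, if it went through, would be more conceptual than the classification.

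The gap is the separation lemma itself, and you have not actually supplied an argument for it. The hypothesis $\phi(p)=\phi(q)$ says only that $f(p)-f(q)$ equals the same constant $c$ for every $f\in R(K_\Gamma)$; it gives no direct information about $\mathrm{ord}_p(f)$ versus $\mathrm{ord}_q(f)$, and membership in $R(K_\Gamma-p)$ is a condition on the latter. Over a field the step is linear algebra, but here you correctly note there is no subtraction, and your proposal (``transport by a chip-firing move along a path from $p$ to $q$'', ``control this using reduced divisors and Dhar's burning algorithm'') is a wish rather than a proof: you would have to show, for a fixed $D\in|K_\Gamma|$ with $D(p)\ge 1$, that necessarily $D(q)\ge 1$, and nothing in the value condition forces this for an individual $D$. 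Making this precise is exactly where the tropical work lies, and until it is written out the argument is incomplete; it is not clear it can be finished without importing some of the structural analysis that Haase--Musiker--Yu carry out.
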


In the proof of Theorem \ref{HMY2}, Haase--Musiker--Yu\cite{Haase=Musiker=Yu} gave all hyperelliptic metric graphs satisfying the condition concretely.
Moreover, Haase--Musiker--Yu\cite{Haase=Musiker=Yu} showed that the inverse of Theorem \ref{HMY2} does not hold and posed the problem of other characterizing metric graphs with non-injective canonical maps.
As answers of this problem, we give Theorem \ref{application1} and Corollary \ref{application2} as follows.

\begin{thm}[Theorem \ref{canonical map1}]
	\label{application1}
Let $\Gamma$ be a metric graph without one valent points.
Then, the canonical map induces a morphism which is a double covering on the image if and only if the genus of $\Gamma$ is two.
\end{thm}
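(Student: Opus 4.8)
The plan is to prove both implications by comparing the full canonical map $\phi_{|K_\Gamma|}$ with its $\langle\iota\rangle$-invariant part, where $\iota$ is the hyperelliptic involution furnished by Theorem \ref{HMY2}. Write $g=g(\Gamma)$; recall $\deg K_\Gamma=2g-2$ and, by tropical Riemann--Roch, $r(K_\Gamma)=g-1$, so the image of $\phi_{|K_\Gamma|}$ lies in a tropical projective space $\boldsymbol{TP}^{N}$ with $N+1$ the number of generators of $R(K_\Gamma)$ and $N\ge g-1$. Since a positive-dimensional image requires $r(K_\Gamma)\ge 1$, we have $g\ge 2$ throughout.

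\textbf{Backward direction} ($g=2\Rightarrow$ double covering). A genus-two metric graph without one-valent points is hyperelliptic, so $\iota$ exists. As $\deg K_\Gamma=2$ and $r(K_\Gamma)=1$, the canonical system is the unique $g^{1}_{2}$, whose members are exactly the divisors $x+\iota x$; these are $\langle\iota\rangle$-invariant, so $|K_\Gamma|=|K_\Gamma|^{\langle\iota\rangle}$ and $\phi_{|K_\Gamma|}=\phi_{|K_\Gamma|^{\langle\iota\rangle}}$. Because the target is $\boldsymbol{TP}^{1}$ and $\iota$ exchanges the two sheets over each interior point, $\phi_{|K_\Gamma|^{\langle\iota\rangle}}$ sends distinct $\langle\iota\rangle$-orbits to distinct points; Theorem \ref{Galois}, applied with $D=K_\Gamma$ and $K=\langle\iota\rangle$, then shows that this map is an $\langle\iota\rangle$-Galois covering onto its image. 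The image, lying in $\boldsymbol{TP}^{1}$, is a tree, and the degree equals $|\langle\iota\rangle|=2$, so $\phi_{|K_\Gamma|}$ is a double covering.

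\textbf{Forward direction} ($\text{double covering}\Rightarrow g=2$). Suppose $\phi_{|K_\Gamma|}$ is a morphism of degree $2$ onto its image. It is then non-injective, so $\Gamma$ is hyperelliptic by Theorem \ref{HMY2}, and the covering involution is the unique hyperelliptic $\iota$; hence $\phi_{|K_\Gamma|}$ is constant on $\iota$-orbits, factors through $\Gamma\to\Gamma/\iota$, and has image a tree (genus $0$). The pulled-back tropical hyperplane class is $K_\Gamma$, of degree $2g-2$, so the image has tropical degree $(2g-2)/2=g-1$; as a nondegenerate curve in $\boldsymbol{TP}^{N}$ it has degree at least $N$, forcing $N\le g-1$ and therefore $N=g-1$. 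Thus a double covering is possible only when the canonical map already has minimal embedding dimension $N=g-1$, realising its image as a minimal-degree tree in $\boldsymbol{TP}^{g-1}$.

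\textbf{The main obstacle} is the final exclusion of $g\ge 3$. The decisive tropical point is that, although $r(K_\Gamma)=g-1$, for a hyperelliptic metric graph of genus $\ge 3$ the semimodule $R(K_\Gamma)$ needs strictly more than $g$ generators, so $N>g-1$; equivalently $|K_\Gamma|^{\langle\iota\rangle}\subsetneq|K_\Gamma|$, whence the full canonical system separates some pair $x,\iota x$ and $\phi_{|K_\Gamma|}$ cannot be constant on that orbit, contradicting uniform degree $2$. I would prove $N>g-1$ for $g\ge 3$ by a direct examination of the hyperelliptic models listed in the proof of Theorem \ref{HMY2}: in each case one exhibits a non-$\iota$-invariant generator of $R(K_\Gamma)$, built from a rational function concentrated at a point not fixed by $\iota$, which simultaneously raises the embedding dimension and destroys the degree-two property. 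Together with the reduction above, this leaves $g=2$ as the only possibility and completes the proof.
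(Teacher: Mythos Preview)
Your backward direction ($g=2\Rightarrow$ double covering) is correct and essentially matches the paper: the canonical system is the $g^1_2$, hence $\langle\iota\rangle$-invariant, and Lemma~\ref{hyperelliptic lemma} (your Theorem~\ref{Galois}) gives the $\langle\iota\rangle$-Galois covering onto a tree.

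The forward direction, however, has two genuine gaps.

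\textbf{First}, the degree--dimension paragraph does not stand up in the tropical setting. The assertions ``the pulled-back tropical hyperplane class is $K_\Gamma$'', ``the image has tropical degree $(2g-2)/2$'', and ``a nondegenerate curve in $\boldsymbol{TP}^N$ has degree at least $N$'' are imported from classical projective geometry; none of them is defined or proved in this paper (nor in \cite{Haase=Musiker=Yu}), and the relationship between $r(K_\Gamma)=g-1$ and the number $N+1$ of semimodule generators of $R(K_\Gamma)$ is far from the classical one. This paragraph should be removed; it does no work you can rely on.

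\textbf{Second}, the step you yourself call the ``main obstacle'' is not carried out. You say you would exhibit a non-$\iota$-invariant generator by examining the models from Theorem~\ref{HMY2}, but that examination \emph{is} the proof. (There is also a smaller gap: you assume the deck transformation of the hypothetical double covering equals the hyperelliptic involution $\iota$; this needs an argument, since a priori a degree-$2$ Galois cover only gives you \emph{some} involution $\sigma$ with $R(K_\Gamma)=R(K_\Gamma)^{\langle\sigma\rangle}$.)

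The paper avoids both issues by arguing more directly and more strongly. For $g\ge 3$ with $K_\Gamma$ not very ample, \cite[Theorem 49]{Haase=Musiker=Yu} says $\Gamma$ is one of two explicit families. For each family the paper writes down, edge by edge, explicit extremals of $R(K_\Gamma)$ (the functions $f_{i1},f_{i2},f_{i3}$ and $h_1,\ldots,h_8$ in Figures~\ref{canonicalmap-3}--\ref{canonicalmap-7}) which together make $\phi_{|K_\Gamma|}$ \emph{injective on all of $\Gamma\setminus\{x,y\}$}, while $\phi_{|K_\Gamma|}(x)=\phi_{|K_\Gamma|}(y)$. Thus the generic fibre has exactly one point, not two, so $\phi_{|K_\Gamma|}$ cannot be a degree-$2$ harmonic morphism for any choice of deck transformation. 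This sidesteps the need to identify $\sigma$ with $\iota$ and replaces your dimension count with a concrete injectivity statement.
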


By Theorem \ref{Galois}, Theorem \ref{HMY2} and its proof and Theorem \ref{application1}, we have the following.

\begin{cor}[Corollary \ref{canonical map2}]
	\label{application2}
Let $\Gamma$ be a metric graph of genus at least three without one valent points.
Then, the canonical map of $\Gamma$ is not injective if and only if the map induced by the canonical map is not harmonic.
\end{cor}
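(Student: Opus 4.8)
The plan is to prove the equivalent reformulation that the canonical map $\phi_{|K_{\Gamma}|}$ is injective if and only if it is harmonic, where $K_{\Gamma} = \sum_{p}({\rm val}(p) - 2)\, p$ denotes the canonical divisor; since $\Gamma$ has no one valent points every point has valence at least two, so $K_{\Gamma}$ is effective and $\phi_{|K_{\Gamma}|}$ is defined. For the implication ``injective $\Rightarrow$ harmonic'' I would invoke Theorem \ref{Galois} with the acting group taken to be trivial and $D = K_{\Gamma}$. In that case $|K_{\Gamma}|^{K} = |K_{\Gamma}|$, the orbits are single points, and the condition ``maps distinct orbits to distinct points'' is precisely injectivity; Theorem \ref{Galois} then gives that $\phi_{|K_{\Gamma}|}$ induces a Galois covering for the trivial group, that is, an isomorphism onto its image, which is in particular a morphism and hence harmonic. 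Reading this contrapositively yields ``not harmonic $\Rightarrow$ not injective''.

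For the reverse implication ``not injective $\Rightarrow$ not harmonic'', suppose $\phi_{|K_{\Gamma}|}$ is not injective. Theorem \ref{HMY2} then forces $\Gamma$ to be hyperelliptic, so the hyperelliptic involution $\iota$ acts on $\Gamma$ with quotient a tree. I would argue by contradiction: assume $\phi_{|K_{\Gamma}|}$ is harmonic. Appealing to the explicit description of the non injective hyperelliptic canonical maps produced in the proof of Theorem \ref{HMY2}, the only identifications made by $\phi_{|K_{\Gamma}|}$ are those of the two points in each $\iota$-orbit, while distinct $\iota$-orbits are kept apart. Consequently $\phi_{|K_{\Gamma}|}$ maps distinct $\langle \iota \rangle$-orbits to distinct points, and Theorem \ref{Galois} applied with $K = \langle \iota \rangle$ shows that the harmonic map $\phi_{|K_{\Gamma}|}$ induces an $\langle \iota \rangle$-Galois covering onto its image; in particular its degree equals $|\langle \iota \rangle| = 2$, so it is a double covering.

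To close the argument I would invoke Theorem \ref{application1}: a canonical map that is a harmonic double covering on its image can occur only when the genus of $\Gamma$ equals two. Since by hypothesis the genus is at least three, this is a contradiction, so $\phi_{|K_{\Gamma}|}$ cannot be harmonic. Combining the two implications gives the equivalence of not being injective and not being harmonic.

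The step I expect to be the main obstacle is the transition, in the second implication, from ``harmonic and not injective'' to ``double covering of degree exactly two''. This hinges on knowing that the failure of injectivity collapses precisely the $\iota$-orbits and separates distinct orbits --- information residing in the proof and classification underlying Theorem \ref{HMY2} rather than in its statement --- and on verifying that the Galois criterion of Theorem \ref{Galois}, which is phrased for the invariant system $|K_{\Gamma}|^{\langle \iota \rangle}$, may legitimately be transported to the map $\phi_{|K_{\Gamma}|}$ defined by the full system $R(K_{\Gamma})$; one must check that these two maps induce the same fibres on $\langle \iota \rangle$-orbits so that the degree computation and the comparison with Theorem \ref{application1} remain valid.
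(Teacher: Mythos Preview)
Your first implication (injective $\Rightarrow$ harmonic via Theorem~\ref{Galois} with trivial $K$) is correct and matches the paper's implicit reasoning.

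The second implication has a genuine gap: the central factual claim is false. In the two families of hyperelliptic graphs classified in the proof of Theorem~\ref{HMY2} and analysed in the proof of Theorem~\ref{application1}, the canonical map is shown to be \emph{injective on all of $\Gamma\setminus\{x,y\}$} and to identify only the two vertices $x$ and $y$. Thus for a generic interior point $p$ the points $p$ and $\iota(p)$ have \emph{distinct} images under $\phi_{|K_\Gamma|}$; the fibres are singletons everywhere except the single fibre $\{x,y\}$. So ``the only identifications made by $\phi_{|K_\Gamma|}$ are those of the two points in each $\iota$-orbit'' is simply wrong, and no $\langle\iota\rangle$-Galois structure of degree~$2$ can be extracted. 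Your own caveat about transporting Theorem~\ref{Galois} from $R(K_\Gamma)^{\langle\iota\rangle}$ to the full $R(K_\Gamma)$ is therefore not a side issue but the place where the argument actually breaks: the two maps have different fibres, and the full canonical map is generically one-to-one, not two-to-one.

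The paper's proof is direct and avoids this detour entirely. Once one knows from the analysis in Theorem~\ref{application1} that $\phi_{|K_\Gamma|}$ is injective on $\Gamma\setminus\{x,y\}$ with $\phi_{|K_\Gamma|}(x)=\phi_{|K_\Gamma|}(y)$, one just observes that the local degree at any interior point is~$1$ (the map is a local isometry there), whereas over the image of $x$ the fibre $\{x,y\}$ contributes at least~$2$; hence the total degree over a point is not constant and the map cannot be harmonic. You can repair your argument by replacing the ``degree-two Galois cover'' step with this degree-mismatch observation.
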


By using Theorem \ref{main theorem1} and the fact that for the rational map induced by the canonical linear system associated with a divisor whose degree is two and whose rank is one on a metric graph, the image is a tree and the order of every fibre is one or two ({\cite[Proposition $48$]{Haase=Musiker=Yu}}), we have the following.

\begin{thm}[Theorem \ref{canonical map3}]
	\label{double covering}
For a hyperelliptic metric graph with genus at least two without one valent points, the invariant linear system of the hyperelliptic involution $\iota$ of the canonical linear system induces a rational map whose image is a tree and which is a $\langle \iota \rangle$-Galois covering on the image.
\end{thm}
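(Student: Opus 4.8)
The plan is to apply Theorem \ref{Galois} to the $\langle\iota\rangle$-invariant canonical system, so that the entire task reduces to verifying a single separation property: that $\phi_{|K_{\Gamma}|^{\langle\iota\rangle}}$ maps distinct $\langle\iota\rangle$-orbits to distinct points. Once this is established, both conclusions of the statement follow, the tree condition coming essentially for free. First I would fix a divisor $D$ of degree two and rank one realizing the hyperelliptic structure, so that $\iota$ is the involution attached to $\phi_{|D|}$ and $\Gamma/\langle\iota\rangle$ is its image. By \cite[Proposition $48$]{Haase=Musiker=Yu} this image is a tree and every fibre of $\phi_{|D|}$ has order one or two; since $\iota$ exchanges the two points of each two-point fibre and fixes the one-point fibres, the fibres of $\phi_{|D|}$ are exactly the $\langle\iota\rangle$-orbits. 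In particular $\phi_{|D|}$ already separates distinct orbits and its image is the tree $\Gamma/\langle\iota\rangle$.

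Next I would pass from $D$ to the canonical divisor through the linear equivalence $K_{\Gamma} \sim (g-1)D$, the metric-graph analogue of the classical fact that the canonical class of a hyperelliptic object is $(g-1)$ times the $g^1_2$. Choosing a rational function $h$ with $\mathrm{div}(h) = K_{\Gamma} - (g-1)D$, one checks that $h$ may be taken $\iota$-invariant: its divisor is $\iota$-invariant, so $\mathrm{div}(h\circ\iota) = \mathrm{div}(h)$ and hence $h\circ\iota = h + c$ on the connected graph $\Gamma$, forcing $c=0$. Translation by $h$ is then an isomorphism $R((g-1)D)^{\langle\iota\rangle} \cong R(K_{\Gamma})^{\langle\iota\rangle}$ of tropical semimodules, and since adding the common function $h$ to every coordinate does not change a point of tropical projective space, the two associated maps coincide, $\phi_{|K_{\Gamma}|^{\langle\iota\rangle}} = \phi_{|(g-1)D|^{\langle\iota\rangle}}$. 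Thus it suffices to prove that $\phi_{|(g-1)D|^{\langle\iota\rangle}}$ separates distinct $\langle\iota\rangle$-orbits.

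For this I would first observe that every element of $R(D)$ is automatically $\iota$-invariant: if $f \in R(D)$, then $D + \mathrm{div}(f)$ is a fibre of $\phi_{|D|}$, hence $\iota$-invariant, whence $\mathrm{div}(f\circ\iota) = \mathrm{div}(f)$ and $f\circ\iota = f$ by the connectedness argument above. Therefore $R(D) = R(D)^{\langle\iota\rangle}$ and $\phi_{|D|}$ itself is orbit-separating. If $f_0,\dots,f_N$ generate $R(D)$, then the ordinary multiples $(g-1)f_0,\dots,(g-1)f_N$ lie in $R((g-1)D)^{\langle\iota\rangle}$, because tropical multiplication adds divisors so that $(g-1)D + \mathrm{div}((g-1)f_j) = (g-1)\bigl(D + \mathrm{div}(f_j)\bigr) \geq 0$. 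The map $p \mapsto [(g-1)f_0(p):\dots:(g-1)f_N(p)]$ differs from $\phi_{|D|}$ only by the injective rescaling of tropical projective space, so it still separates distinct orbits; and since its coordinate functions belong to the semimodule $R((g-1)D)^{\langle\iota\rangle}$, the map $\phi_{|(g-1)D|^{\langle\iota\rangle}}$ factors onto this rescaled map by a tropical-linear map. Hence it identifies at most the points the rescaled map identifies, and so it too separates distinct orbits.

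Having shown that $\phi_{|K_{\Gamma}|^{\langle\iota\rangle}}$ maps distinct $\langle\iota\rangle$-orbits to distinct points, Theorem \ref{Galois} gives that it induces a $\langle\iota\rangle$-Galois covering on its image; being $\iota$-invariant and orbit-separating, the map factors through a bijection $\Gamma/\langle\iota\rangle \to \mathrm{Im}(\phi_{|K_{\Gamma}|^{\langle\iota\rangle}})$, which is an isometry because the covering has degree two, so the image is isometric to the tree $\Gamma/\langle\iota\rangle$ of \cite[Proposition $48$]{Haase=Musiker=Yu}. I expect the main obstacle to be the step supplying $K_{\Gamma} \sim (g-1)D$ together with the identification of the fibres of $\phi_{|D|}$ with the $\langle\iota\rangle$-orbits, since these two facts carry all of the hyperelliptic geometry; the accompanying tropical subtleties, namely that $R(D)$ is entirely $\iota$-invariant and that the rescaled multiples genuinely lie in the invariant canonical semimodule, must be handled with some care, whereas the final Galois and tree conclusions are then formal consequences of Theorem \ref{Galois}.
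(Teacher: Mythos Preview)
Your proof is correct and follows the paper's strategy closely: both reduce to the $g^1_2$ system $|D|$, use \cite[Proposition~48]{Haase=Musiker=Yu} to see that $\phi_{|D|}$ separates $\langle\iota\rangle$-orbits, embed $R(D)$ into $R(K_{\Gamma})^{\langle\iota\rangle}$, and then apply a containment principle (your ``factoring'' argument is exactly the paper's Lemma~\ref{containment relation lemma}) to conclude that $\phi_{|K_{\Gamma}|^{\langle\iota\rangle}}$ is $\langle\iota\rangle$-injective and hence $\langle\iota\rangle$-Galois by Theorem~\ref{Galois}; the tree conclusion then comes from the identification of the image with $\Gamma/\langle\iota\rangle$. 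The one real difference is the embedding of $R(D)$ into the invariant canonical semimodule: you pass through the linear equivalence $K_{\Gamma}\sim(g-1)D$ and send $f\mapsto(g-1)f$, whereas the paper writes (a representative of) $K_{\Gamma}$ as $D+E$ with $E$ effective and $\iota$-invariant, so that $R(D)$ sits inside $R(D+E)^{\langle\iota\rangle}$ by the identity on functions, giving the sublinear system $\Lambda=\{D_1+E:D_1\in|D|\}$. The paper's route is a bit more economical precisely at the point you flagged as the main obstacle, since it avoids proving $K_{\Gamma}\sim(g-1)D$ and needs only $r(K_{\Gamma}-D)\geq 0$ together with an $\iota$-invariant effective representative of $K_{\Gamma}-D$; your route mirrors the classical argument for Riemann surfaces more faithfully, at the cost of supplying that extra (true) equivalence.
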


Theorem \ref{double covering} means that an analogy of the fact the canonical map of a classical hyperelliptic compact Riemann surface is a double covering holds by the rational map induced not by the canonical linear system but by an invariant linear subsystem of the hyperelliptic involution of the canonical system for a hyperelliptic metric graph.

The length of each edge of ${\rm Im}(\phi_{|D|^K})$ in Theorem \ref{Galois} have not given in \cite{Haase=Musiker=Yu}.
We show this edge length is defined naturally and then we become to be able to argue whether a rational map is harmonic or not.

In this paper, we recall some basic facts corresponding to metric graphs in Section $2$.
We prove Theorem \ref{main theorem1} and corresponding statements in Section $3$.
Metric graph with edge-multiplicities and harmonic morphisms between them, we need in Section $3$, are defined in Section $4$.

\section{Preliminaries}

In this section, we briefly recall some basic facts of tropical algebra (\cite{Akiba},\cite{Katakura}), metric graphs (\cite{Kawaguchi=Yamaki}), divisors on metric graphs (\cite{ABBR1}, \cite{Chan}, \cite{GK}, \cite{Kawaguchi=Yamaki}, \cite{MZ}), harmonic morphisms of metric graphs (\cite{Chan}, \cite{Haase=Musiker=Yu}, \cite{Kageyama}), and chip-firing moves on metric graphs (\cite{Haase=Musiker=Yu}), which we need later.

\subsection{Tropical algebra}
The set of $\boldsymbol{T}:=\boldsymbol{R} \cup \{ - \infty \}$ with two tropical operations:
\begin{center}
$a \oplus b := {\rm max}\{ a, b \}$~~~and~~~$a \odot b := a + b$,
\end{center}
where both $a$ and $b$ are in $\boldsymbol{T}$, becomes a semifield.
$\boldsymbol{T}=(\boldsymbol{T}, \oplus, \odot)$ is called the {\it tropical semifield} and $\oplus$ (resp. $\odot$) is called {\it tropical sum} (resp. {\it tropical multiplication}).
We frequently write $a \oplus b $ and $a \odot b$ as ``$a + b$'' and ``$ab$'', respectively.

A vector $\boldsymbol{v} \in \boldsymbol{T}^n$ is {\it primitive} if all coefficients of $\boldsymbol{v}$ are integers and their greatest common divisor is one.
For a vector $\boldsymbol{u} \in \boldsymbol{Q}^n$, its length is defined as $\lambda$ such that $\boldsymbol{u} = \lambda \boldsymbol{v}$, where $\boldsymbol{v} \in \boldsymbol{Z}^n$ is the primitive vector with the same direction as $\boldsymbol{u}$.
For a vector $\boldsymbol{u} = ( u_1,\ldots, u_n ) \in \boldsymbol{T}^n$, we define the length of $\boldsymbol{u}$ as $\infty$ if each $u_i \in \boldsymbol{Q} \cup \{ -\infty \}$ and some $u_j = - \infty$.
In each case, we call $\lambda$ or $\infty$ the {\it lattice length} of $\boldsymbol{u}$.

For $(x_1, \ldots, x_n) \in \boldsymbol{T}^n$ and $a \in \boldsymbol{T}$, we define a scalar operation in $\boldsymbol{T}^n$ as follows:
\[
``a(x_1, \ldots, x_n)\text{''} := (``ax_1\text{''}, \ldots, ``ax_n\text{''}).
\]
For $\boldsymbol{x}, \boldsymbol{y} \in \boldsymbol{T}^{n + 1} \setminus \{ (- \infty, \ldots, - \infty) \}$, we define the following relation $\sim$:
\begin{center}
$\boldsymbol{x} \sim \boldsymbol{y} \iff$ there exists a real number $\lambda$ such that $\boldsymbol{x} =$``$\lambda \boldsymbol{y}$''.
\end{center}
The relation $\sim$ becomes an equivalence relation.
$\boldsymbol{TP}^n := \boldsymbol{T}^{n+1} / \sim$ is called the {\it $n$-dimensional tropical projective space}.

Let $\boldsymbol{u} = ( u_1 : \cdots: u_{n+1})$ and $\boldsymbol{v} = ( v_1 : \cdots: v_{n+1})$ be distinct two points on $\boldsymbol{TP}^n (n \ge 2)$.
A distance between $\boldsymbol{u} = ( u_1 : \cdots: u_{n+1})$ and $\boldsymbol{v} = ( v_1 : \cdots: v_{n+1})$ is defined as ``the lattice length of $((u_1 - u_i) - (v_1 - v_i), \ldots, (u_n - u_i) - (v_n - v_i))$''$ = l \cdot {\rm gcd}((u_1 - u_i) - (v_1 - v_i), \ldots, (u_n - u_i) - (v_n - v_i))$ for some $i$ if all $u_j - u_i ,v_j - v_i$ are rational numbers, where $l$ is a positive rational number such that all $\frac{(u_j - u_i)}{l}$ and $\frac{(v_j - v_i)}{l}$ are integers.
A distance between a point and itself on $\boldsymbol{TP}^n$ is defined by zero.

\begin{lemma}
	\label{well-definedness of length}
Let $\boldsymbol{u} = ( u_1 : \cdots: u_{n+1})$ and $\boldsymbol{v} = ( v_1 : \cdots: v_{n+1})$ be distinct two points on $\boldsymbol{TP}^n (n \ge 2)$ such that for some $i$, all $u_j - u_i ,v_j - v_i$ are integers.
Then, 
\begin{eqnarray*}
&&{\rm gcd}((u_1 - u_i) - (v_1 - v_i), \ldots, (u_n - u_i) - (v_n - v_i))\\ &=& {\rm gcd}((u_1 - u_k) - (v_1 - v_k), \ldots, (u_n - u_k) - (v_n - v_k))
\end{eqnarray*}
holds for any $k$.
\end{lemma}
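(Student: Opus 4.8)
The plan is to reduce the statement to the elementary fact that the greatest common divisor of the differences of a finite integer tuple from a chosen base entry is independent of that base. Setting $a_j := u_j - v_j$ for $j = 1, \ldots, n+1$, the key observation is that each entry of the vector in question rewrites as
\[
(u_j - u_i) - (v_j - v_i) = (u_j - v_j) - (u_i - v_i) = a_j - a_i,
\]
so the vector whose lattice length defines the distance is $(a_j - a_i)$ with $j$ ranging over the indices other than the base $i$ (a formally included term $j = i$ would contribute only a zero, which does not affect the gcd). The hypothesis that $u_j - u_i$ and $v_j - v_i$ are integers for all $j$ makes every $a_j - a_i$ an integer, and the identity $a_j - a_k = (a_j - a_i) - (a_k - a_i)$ shows that this integrality propagates from the base $i$ to every other base $k$, so the asserted equality is meaningful for all $k$. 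I would also record the nondegeneracy: since $\boldsymbol{u}$ and $\boldsymbol{v}$ are distinct in $\boldsymbol{TP}^n$, there is no real $\lambda$ with $u_j - v_j = \lambda$ for all $j$, hence the $a_j$ are not all equal and at least one $a_j - a_i$ is nonzero. Thus each ${\rm gcd}_{j \ne i}(a_j - a_i)$ is a well-defined positive integer, which I denote $G_i$.

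The core of the argument is to prove $G_i = G_k$ for $i \ne k$ by mutual divisibility. By definition $G_i$ divides $a_j - a_i$ for every $j \ne i$, and in particular it divides $a_k - a_i$. Hence, for any $j \ne k$ with $j \ne i$, the identity $a_j - a_k = (a_j - a_i) - (a_k - a_i)$ exhibits $a_j - a_k$ as a difference of two multiples of $G_i$, while for $j = i$ one has $a_i - a_k = -(a_k - a_i)$, again a multiple of $G_i$. Therefore $G_i$ divides every entry $a_j - a_k$ with $j \ne k$, so $G_i \mid G_k$. Interchanging the roles of $i$ and $k$ gives $G_k \mid G_i$, and since both are positive integers we conclude $G_i = G_k$, which is exactly the claimed equality (the case $i = k$ being trivial).

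The main, and essentially only, subtlety is the index bookkeeping: the divisibility $G_i \mid (a_k - a_i)$ that drives the computation requires $a_k - a_i$ to occur among the entries defining $G_i$, which is precisely the statement that the difference vector is indexed by all coordinates other than the base $i$, that is, over $\{1, \ldots, n+1\} \setminus \{i\}$ rather than over a fixed set $\{1, \ldots, n\}$ independent of the base. With this convention fixed, the argument goes through uniformly for every pair $i, k$, and the remaining content is the routine mutual-divisibility manipulation above; there is no analytic or geometric difficulty.
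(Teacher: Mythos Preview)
Your proof is correct and follows essentially the same mutual-divisibility strategy as the paper: both arguments reduce to showing that the gcd of the differences $a_j-a_i$ (with $a_j=u_j-v_j$) divides every $a_j-a_k$ via the identity $a_j-a_k=(a_j-a_i)-(a_k-a_i)$, and then conclude by symmetry. Your version is somewhat cleaner because you introduce the abbreviation $a_j$ at the outset and make the indexing convention explicit, whereas the paper carries the full $u,v$ notation and leaves the index range implicit, but the underlying argument is the same.
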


\begin{proof}
Let $l_i :={\rm gcd}((u_1 - u_i) - (v_1 - v_i), \ldots, (u_n - u_i) - (v_n - v_i))$ for any $i$.
For any $k$, we have integers $m_k$ and $t_k$ such that $(u_k - u_i) - (v_k - v_i) = l_i \cdot m_k, {\rm gcd}(m_1, \ldots, m_n)=1$, $(u_k - u_j) - (v_k - v_j) = l_j \cdot t_k$ and ${\rm gcd}(t_1, \ldots, t_n)=1$.
Since $(u_k - u_i) - (v_k - v_i) = (u_k - v_k) - (u_i - v_i)$,
\begin{eqnarray*}
l_j &=& {\rm gcd}((u_1 - u_j) - (v_1 - v_j), \ldots, (u_n - u_j) - (v_n - v_j))\\
&=& {\rm gcd}((u_1 - v_1) - (u_j - v_j), \ldots, (u_n - v_n) - (u_j - v_j))\\
&=& {\rm gcd}((u_i - v_i) + l_i \cdot m_1 - (u_j - v_j), \ldots, (u_i - v_i) + l_i \cdot m_n - (u_j - v_j))\\
&=& {\rm gcd}((u_i - u_j) - (v_i - v_j) + l_i \cdot m_1, \ldots, (u_i - u_j) - (v_i - v_j) + l_i \cdot m_n)\\
&=& {\rm gcd}(l_j \cdot t_i + l_i \cdot m_1, \ldots, l_j \cdot t_i + l_i \cdot m_n).
\end{eqnarray*}
Then $l_j$ must divide $l_i \cdot m_1, \ldots, l_i \cdot m_n$.
As ${\rm gcd}(m_1, \ldots, m_n)=1$, $l_j$ divides $l_i$.
Thus $l_j \le l_i$.
The inverse inequality holds since $i$ and $j$ are arbitrary.
\end{proof}

By Lemma \ref{well-definedness of length}, the above distance between two points of $\boldsymbol{TP}^n$ satisfying the condition is well-defined.

A {\it tropical semimodule} on $\boldsymbol{T}$ is defined like a classical module on a ring.
Note that a tropical semimodule on $\boldsymbol{T}$ has two tropical operations: tropical sum $\oplus$ and tropical scalar multiplication $\odot$.
Let $R$ and $R^{\prime}$ be tropical semimodules on $\boldsymbol{T}$, respectively.
A map $f : R \rightarrow R^{\prime}$ is said a {\it homomorphism} if for any $a, b \in R$ and $\lambda \in \boldsymbol{T}$, $f(a \oplus b) = f(a) \oplus f(b)$ and $f(\lambda \odot a) = \lambda \odot f(a)$ hold.
For a homomorphism $f : R \rightarrow R^{\prime}$ of tropical semimodules, $f$ is an {\it isomorphism} if there exists a homomorphism $f^{\prime} : R^{\prime} \rightarrow R$ of tropical semimodules such that $f^{\prime} \circ f = {\rm id}_R$ and $f \circ f^{\prime} = {\rm id}_{R^{\prime}}$.
Then, $f^{\prime}$ is also an isomorphism.
Two tropical semimodules $R$ and $R^{\prime}$ are {\it isomorphic} if there exists an isomorphism of tropical semimodules between them.

\subsection{Metric graphs}

In this paper, a {\it graph} means an unweighted, finite connected nonempty multigraph.
Note that we allow the existence of loops.
For a graph $G$, the sets of vertices and edges are denoted by $V(G)$ and $E(G)$, respectively.
The {\it genus} of $G$ is defined by $g(G):=|E(G)|-|V(G)|+1$.
The {\it valence} ${\rm val}(v)$ of a vertex $v$ of $G$ is the number of edges emanating from $v$, where we count each loop as two.
A vertex $v$ of $G$ is a {\it leaf end} if $v$ has valence one.
A {\it leaf edge} is an edge of $G$ adjacent to a leaf end.

An {\it edge-weighted graph} $(G, l)$ is the pair of a graph $G$ and a function $l: E(G) \rightarrow {\boldsymbol{R}}_{>0} \cup \{\infty\}$ called a {\it length function}, where $l$ can take the value $\infty$ on only leaf edges.
A {\it metric graph} is the underlying $\infty$-metric space of an edge-weighted graph $(G, l)$, where each edge of $G$ is identified with the closed interval $[0,l(e)]$ and if $l(e)=\infty$, then the leaf end of $e$ must be identified with $\infty$.
Such a leaf end identified with $\infty$ is called a {\it point at infinity} and any other point is said to be a {\it finite point}.
For the above metric graph $\Gamma$, $(G, l)$ is said to be its {\it model}.
There are many possible models for $\Gamma$.
We construct a model $(G_{\circ}, l_{\circ})$ called the {\it canonical model} of $\Gamma$ as follows.
Generally, we determine $V(G_{\circ}):= \{ x \in \Gamma~|~{\rm val}(x) \neq 2 \}$, where the {\it valence} ${\rm val}(x)$ of $x$ is the number of connected components of $U \setminus \{ x \}$ with any sufficiently small connected neighborhood $U$ of $x$ in $\Gamma$ except following two cases.
When $\Gamma$ is a circle, we determine $V(G_{\circ})$ as the set consisting of one arbitrary point on $\Gamma$.
When $\Gamma$ is the $\infty$-metric space obtained from the graph consisting only of two edges with length of $\infty$ and three vertices adjacent to these edges, $V(G_{\circ})$ consists of the two endpoints of $\Gamma$ (those are points at infinity) and an any point on $\Gamma$ as the origin,
Since connected components of $\Gamma \setminus V(G_{\circ})$ consist of open intervals, whose lengths determine the length function $l_{\circ}$.
If a model $(G, l)$ of $\Gamma$ has no loops, then $(G, l)$ is said to be a {\it loopless model} of $\Gamma$.
For a model $(G, l)$ of $\Gamma$, the loopless model for $(G, l)$ is obtained by regarding all midpoints of loops of $G$ as vertices and by adding them to the set of vertices of $G$.
The loopless model for the canonical model of a metric graph is called the {\it canonical loopless model}.

For terminology, in a metric graph $\Gamma$, an edge of $\Gamma$ means an edge of the underlying graph $G_{\circ}$ of the canonical model $(G_{\circ}, l_{\circ})$.
Let $e$ be an edge of $\Gamma$ which is not a loop.
We regard $e$ as a closed subset of $\Gamma$, {\it {\it i.e.}}, including the endpoints $v_1, v_2$ of $e$.
The {\it relative interior} of $e$ is $e^{\circ} = e \setminus \{ v_1, v_2 \}$.
For a point $x$ on $\Gamma$, a connected component of $U \setminus \{ x \}$ with any sufficiently small connected neighborhood $U$ of $x$ is a {\it half-edge} of $x$.

For a model $(G, l)$ of a metric graph $\Gamma$, we frequently identify a vertex $v$ (resp. an edge $e$) of $G$ with the point corresponding to $v$ on $\Gamma$ (resp. the closed subset corresponding to $e$ of $\Gamma$).

The {\it genus} $g(\Gamma)$ of a metric graph $\Gamma$ is defined to be its first Betti number, where one can check that it is equal to $g(G)$ of any model $(G, l)$ of $\Gamma$.
A metric graph of genus zero is called a {\it tree}.

\subsection{Divisors on metric graphs}

Let $\Gamma$ be a metric graph.
An element of the free abelian group ${\rm Div}(\Gamma)$ generated by points on $\Gamma$ is called a {\it divisor} on $\Gamma$.
For a divisor $D$ on $\Gamma$, its {\it degree} ${\rm deg}(D)$ is defined by the sum of the coefficients over all points on $\Gamma$.
We write the coefficient at $x$ as $D(x)$.
A divisor $D$ on $\Gamma$ is said to be {\it effective} if $D(x) \ge 0$ for any $x$ in $\Gamma$. If $D$ is effective, we write simply $D \ge 0$.
For an effective divisor $D$ on $\Gamma$, the set of points on $\Gamma$ where the coefficient(s) of $D$ is not zero is called the {\it support} of $D$ and written as ${\rm supp}(D)$.
The {\it canonical divisor} $K_{\Gamma}$ of $\Gamma$ is defined as $K_{\Gamma} := \sum_{x \in \Gamma}({\rm val}(x) - 2) \cdot x$.

A {\it rational function} on $\Gamma$ is a constant function of $-\infty$ or a piecewise linear function with integer slopes and with a finite number of pieces, taking the value $\pm \infty$ only at points at infinity.
${\rm Rat}(\Gamma)$ denotes the set of rational functions on $\Gamma$.
For a point $x$ on $\Gamma$ and $f$ in ${\rm Rat}(\Gamma)$ which is not constant $-\infty$, the sum of the outgoing slopes of $f$ at $x$ is denoted by ${\rm ord}_x(f)$.
If $x$ is a point at infinity and $f$ is infinite there, we define ${\rm ord}_x(f)$ as the outgoing slope from any sufficiently small connected neighborhood of $x$.
Note when $\Gamma$ is a singleton, for any $f$ in ${\rm Rat}(\Gamma)$, we define ${\rm ord}_x(f) := 0$.
This sum is $0$ for all but finite number of points on $\Gamma$, and thus
\[
{\rm div}(f):=\sum_{x \in \Gamma}{\rm ord}_x(f) \cdot x
\]
is a divisor on $\Gamma$, which is called the {\it principal divisor} defined by $f$.
Two divisors $D$ and $E$ on $\Gamma$ are said to be {\it linearly equivalent} if $D-E$ is a principal divisor.
We handle the values $\infty$ and $-\infty$ as follows.
Let $f, g$ in ${\rm Rat}(\Gamma)$ take the value $\infty$ and $-\infty$ at a point $x$ at infinity on $\Gamma$ respectively,
and $y$ be any point in any sufficiently small neighborhood of $x$.
When ${\rm ord}_x(f) + {\rm ord}_x(g)$ is negative, then $(f \odot g)(x) := \infty$.
When ${\rm ord}_x(f) + {\rm ord}_x(g)$ is positive, then $(f \odot g)(x) := -\infty$.
Remark that the constant function of $-\infty$ on $\Gamma$ dose not determine a principal divisor.
For a divisor $D$ on $\Gamma$, the {\it complete linear system} $|D|$ is defined by the set of effective divisors on $\Gamma$ being linearly equivalent to $D$.

For a divisor $D$ on a metric graph, let $R(D)$ be the set of rational functions $f \not\equiv -\infty$ such that $D + {\rm div}(f)$ is effective together with $-\infty$.
When ${\rm deg}(D)$ is negative, $|D|$ is empty, so is $R(D)$.
Otherwise, from the argument in Section $3$ of \cite{Haase=Musiker=Yu}, $|D|$ is not empty and consequently so is $R(D)$.
Hereafter, we treat only divisors of nonnegative degree.

\begin{rem}[\cite{Song} and cf. {\cite[Lemma 4]{Haase=Musiker=Yu}}]
	\label{R(D) is tropical semimodule}
\upshape{
$R(D)$ becomes a tropical semimodule on $\boldsymbol{T}$ by extending above tropical operations onto functions, giving pointwise sum and product.
}
\end{rem}


For a tropical subsemimodule $M$ of $(\boldsymbol{R} \cup \{ \pm \infty \})^{\Gamma}$ (or of $\boldsymbol{R}^{\Gamma}$), $f$ in $M$ is called an {\it extremal of} $M$ when it implies $f = g_1$ or $f = g_2$ that any $g_1$ and $g_2$ in $M$ satisfies $f = g_1 \oplus g_2$.



\begin{rem}[\cite{Song}]
\upshape{
	\label{generator is extremal}
Any finitely generated tropical subsemimodule $M$ of $R(D) \subset (\boldsymbol{R} \cup \{ \pm \infty \})^{\Gamma}$ is generated by the extremals of $M$.
}
\end{rem}

For a divisor $D$ on a metric graph $\Gamma$, we set $r(D)$, called the {\it rank} of $D$, as the minimum integer $s$ such that for some effective divisor $E$ with degree $s - 1$, the complete linear system associated to $D - E$ is empty set.

A Riemann--Roch theorem for finite loopless graphs was established by Baker--Norine (\cite{Baker=Norine1}).
A Riemann--Roch theorem for metric graphs was proven independently by Gathmann--Kerber (\cite{GK}) and by Mikhalkin--Zharkov (\cite{MZ}). 

\begin{rem}[Riemann--Roch theorem for metric graphs]
{\upshape
Let $\Gamma$ be a metric graph and $D$ a divisor on $\Gamma$.
Then, $r(D) - r(K_{\Gamma} - D) = {\rm deg}(D) + 1 - g(\Gamma)$ holds.
}
\end{rem}

Let $\Gamma$ be a metric graph of genus at least two.
$\Gamma$ is {\it hyperelliptic} if there exists a divisor on $\Gamma$ whose degree is two and whose rank is one.
A binary group action on $\Gamma$ with a tree quotient is called a {\it hyperelliptic involution} of $\Gamma$.
Chan (\cite{Chan}), Amini--Baker--Brugall\'{e}--Rabinoff (\cite{ABBR1}) and Kawaguchi--Yamaki  (\cite{Kawaguchi=Yamaki}) investigated hyperelliptic metric graphs.

\begin{rem}[{\cite[Theorem 5]{Kawaguchi=Yamaki}}]
{\upshape
Let $\Gamma$ be a metric graph of genus at least two without one valent points.
Then, the following are equivalent:
\begin{itemize}
\item[$(1)$] $\Gamma$ is hyperelliptic;
\item[$(2)$] $\Gamma$ has a hyperelliptic involution.
\end{itemize}
Furthermore, a hyperelliptic involution is unique.
}
\end{rem}

\subsection{Harmonic morphisms}

Let $\Gamma, \Gamma^{\prime}$ be metric graphs, respectively, and $\varphi : \Gamma \rightarrow \Gamma^{\prime}$ be a continuous map.
The map $\varphi$ is called a {\it morphism} if there exist a model $(G, l)$ of $\Gamma$ and a model $(G^{\prime}, l^{\prime})$ of $\Gamma^{\prime}$ such that the image of the set of vertices of $G$ by $\varphi$ is a subset of the set of vertices of $G^{\prime}$, the inverse image of the relative interior of any edge of $G^{\prime}$ by $\varphi$ is the union of the relative interiors of a finite number of edges of $G$ and the restriction of $\varphi$ to any edge $e$ of $G$ is a dilation by some nonnegative integer factor ${\rm deg}_e(\varphi)$.
Note that the dilation factor on $e$ with ${\rm deg}_e(\varphi) \ne 0$ represents the ratio of the distance of the images of any two points $x$ and $y$ except points at infinity on $e$ to that of original $x$ and $y$.
If an edge $e$ is mapped to a vertex of $G^{\prime}$ by $\varphi$, then ${\rm deg}_e(\varphi) = 0$.
The morphism $\varphi$ is said to be {\it finite} if ${\rm deg}_e(\varphi) > 0$ for any edge $e$ of $G$.
For any half-edge $h$ of any point on $\Gamma$, we define ${\rm deg}_h(\varphi)$ as ${\rm deg}_e(\varphi)$, where $e$ is the edge of $G$ containing $h$.

Let $\Gamma^{\prime}$ be not a singleton and $x$ a point on $\Gamma$.
The morphism $\varphi$ is {\it harmonic at} $x$ if the number
\[
{\rm deg}_x(\varphi) := \sum_{x \in h \mapsto h^{\prime}}{\rm deg}_h(\varphi)
\]
is independent of the choice of half-edge $h^{\prime}$ emanating from $\varphi(x)$, where $h$ is a connected component of the inverse image of $h^{\prime}$ by $\varphi$ containing $x$. The morphism $\varphi$ is {\it harmonic} if it is harmonic at all points on $\Gamma$.
One can check that if $\varphi$ is a finite harmonic morphism, then the number
\[
{\rm deg}(\varphi) := \sum_{x \mapsto x^{\prime}}{\rm deg}_x(\varphi)
\]
is independent of the choice of a point $x^{\prime}$ on $\Gamma^{\prime}$, and is said the {\it degree} of $\varphi$, where $x$ is an element of the inverse image of $x^{\prime}$ by $\varphi$.
If $\Gamma^{\prime}$ is a singleton and $\Gamma$ is not a singleton, for any point $x$ on $\Gamma$, 
we define ${\rm deg}_x(\varphi)$ as zero so that we regard $\varphi$ as a harmonic morphism of degree zero.
If both $\Gamma$ and $\Gamma^{\prime}$ are singletons, we regard $\varphi$ as a harmonic morphism which can have any number of degree.

The collection of metric graphs together with harmonic morphisms between them forms a category.

Let $\varphi : \Gamma \rightarrow \Gamma^{\prime}$ be a finite harmonic morphism between metric graphs.
The {\it pull-back} of $f^{\prime}$ in ${\rm Rat}(\Gamma^{\prime})$ is the function $\varphi^{\ast}f^{\prime} : \Gamma \rightarrow \boldsymbol{R} ~\cup \{ \pm \infty \}$ defined by $\varphi^{\ast}f^{\prime} := f^{\prime} \circ \varphi$.
We define the {\it push-forward homomorphism} on divisors $\varphi_\ast : {\rm Div}(\Gamma) \rightarrow {\rm Div}(\Gamma^{\prime})$ by 
\[
\varphi_\ast (D) := \sum_{x \in \Gamma}D(x) \cdot \varphi(x).
\]
The {\it pull-back homomorphism} on divisors $\varphi^{\ast}:{\rm Div}(\Gamma^{\prime}) \rightarrow {\rm Div}(\Gamma)$ is defined to be
\[
\varphi^{\ast} (D^{\prime}) := \sum_{x \in \Gamma}{\rm deg}_x(\varphi) \cdot D^{\prime}(\varphi(x)) \cdot x.
\]
One can check that ${\rm deg}(\varphi_\ast(D)) = {\rm deg}(D)$ and $\varphi^{\ast} ({\rm div}(f^{\prime})) = {\rm div}(\varphi^{\ast} f^{\prime})$ for any divisor $D$ on $\Gamma$ and any $f^{\prime}$ in ${\rm Rat}(\Gamma^{\prime})^{\times}$, respectively (cf. \cite[Proposition 4.2]{Baker=Norine}).

\subsection{Chip-firing moves}

In \cite{Haase=Musiker=Yu}, Haase, Musiker and Yu used the term {\it subgraph} of a metric graph as a compact subset of the metric graph with a finite number of connected components and  defined the {\it chip firing move} ${\rm CF}(\widetilde{\Gamma_1}, l)$ by a subgraph $\widetilde{\Gamma_1}$ of a metric graph $\widetilde{\Gamma}$ and a positive real number $l$ as the rational function ${\rm CF}(\widetilde{\Gamma_1}, l)(x) := - {\rm min}(l, {\rm dist}(x, \widetilde{\Gamma_1}))$, where ${\rm dist}(x, \widetilde{\Gamma_1})$ is the infimum of the lengths of the shortest path to arbitrary points on $\widetilde{\Gamma_1}$ from $x$.
They proved that every rational function on a metric graph is an (ordinary) sum of chip firing moves (plus a constant) ({\cite[Lemma 2]{Haase=Musiker=Yu}}) with the concept of a {\it weighted chip firing move}.
This is a rational function on a metric graph having two disjoint proper subgraphs $\widetilde{\Gamma_1}$ and $\widetilde{\Gamma_2}$ such that the complement of the union of $\widetilde{\Gamma_1}$ and $\widetilde{\Gamma_2}$ in $\widetilde{\Gamma}$ consists only of open line segments and such that the rational function is constant on $\widetilde{\Gamma_1}$ and $\widetilde{\Gamma_2}$ and linear (smooth) with integer slopes on the complement.
A weighted chip firing move is an (ordinary) sum of chip firing moves (plus a constant) ({\cite[Lemma 1]{Haase=Musiker=Yu}}).

With unbounded edges, their definition of chip firing moves needs a little correction.
Let $\Gamma_1$ be a subgraph of a metric graph $\Gamma$ which does not have any connected components consisting only of points at infinity and $l$ a positive real number or infinity.
The {\it chip firing move} by $\Gamma_1$ and $l$ is defined as the rational function ${\rm CF}(\Gamma_1, l)(x) := - {\rm min}(l, {\rm dist}(x, \Gamma_1))$.

\begin{rem}[\cite{Song}]
\upshape{
A weighted chip firing move on a metric graph is a linear combination of chip firing moves having integer coefficients (plus a constant).
}
\end{rem}




\begin{rem}[\cite{Song}]
\upshape{
Every rational function on a metric graph is a linear combination of chip firing moves having integer coefficients (plus a constant).
}
\end{rem}

A point on $\Gamma$ with valence two is said to be a {\it smooth} point.
We sometimes refer to an effective divisor $D$ on $\Gamma$ as a {\it chip configuration}.
We say that a subgraph $\Gamma_1$ of $\Gamma$ can {\it fire on} $D$ if for each boundary point of $\Gamma_1$ there are at least as many chips as the number of edges pointing out of $\Gamma_1$.
A set of points on a metric graph $\Gamma$ is said to be {\it cut set} of $\Gamma$ if the complement of that set in $\Gamma$ is disconnected.

\section{Rational maps induced by $|D|^K$}

In this section, our main concern is the rational map induced by an invariant linear system on a metric graph with an action by a finite group $K$.
We find a condition that the rational map induces a $K$-Galois covering on the image.

\subsection{Generators of $R(D)^K$}

In this subsection, for an effective divisor $D$ on a metric graph and a finite group $K$ acting on the metric graph, we give  two proofs, other than that in \cite{Song}, of the statement that the $K$-invariant set $R(D)^K$ of $R(D)$ is finitely generated as a tropical semimodule.
When $D$ is $K$-invariant, $R(D)/\boldsymbol{R}$ is identified with the subset $|D|^K$ of $|D|$ consisting of all $K$-invariant elements of $|D|$, so the $K$-invariant linear system $|D|^K$ is finitely generated by the generating set of $R(D)^K$ modulo tropical scaling (except by $-\infty$).


\begin{rem}[{\cite[Lemma 6]{Haase=Musiker=Yu}}]
	\label{finitely generation of R(D)}
{\upshape
Let $\widetilde{\Gamma}$ be a metric graph, $\widetilde{D}$ be a divisor on $\widetilde{\Gamma}$ and $S$ be the set of rational functions $f$ in $R(\widetilde{D})$ such that the support of $\widetilde{D} + {\rm div}(f)$ does not contain any cut set of $\widetilde{\Gamma}$ consisting only of smooth points.
Then
\begin{itemize}
\item[$(1)$]
$S$ contains all the extremals of $R(\widetilde{D})$,
\item[$(2)$]
$S$ is finite modulo tropical scaling (except by $-\infty$), and
\item[$(3)$]
$S$ generates $R(\widetilde{D})$ as a tropical semimodule.
\end{itemize}
}
\end{rem}

\begin{rem}[{\cite[Theorem 14]{Haase=Musiker=Yu}}]
	\label{finitely generation of R(D)2}
{\upshape
Let $G$ be a model of $\widetilde{\Gamma}$ and let $S_G$ be the set of functions $f \in R(D)$ such that the support of $D+{\rm div}(f)$ does not contain an interior cut set ({\it {\it i.e.}} a cut set consisting of points in interior of edges in the model $G$).
Then 
\begin{itemize}
\item[$(1)$]
$S_G$ contains the set $S$ from Remark \ref{finitely generation of R(D)}, and
\item[$(2)$]
$S_G$ is finite modulo tropical scaling (except by $-\infty$).
\end{itemize}
}
\end{rem}

Though in the above remarks they assume that $R(\widetilde{D})$ is a subset of $\boldsymbol{R}^{\widetilde{\Gamma}}$, the proof is applied even in the case that $R(\widetilde{D})$ is a subset of $(\boldsymbol{R} \cup \{ \pm \infty \})^{\widetilde{\Gamma}}$ with preparations in Section 2.
Also, the above remarks throws the relation between $S$ (resp. $S_G$) and $\widetilde{D}$ into relief, hence hereafter we write $S$ (resp. $S_G$) for $\widetilde{D}$ as $S(\widetilde{D})$ (resp. $S_G(\widetilde{D}))$.

Next, for $R(D)^K$, the following holds.

\begin{rem}[\cite{Song}]
\upshape{
	\label{$R(D)^K$ is a tropical semimodule.}
$R(D)^K$ is a tropical semimodule.
}
\end{rem}


Note that $R(D + {\rm div}(f))^K = R(D)^K \odot (-f)$ for any $K$-invariant rational function $f$.

The following lemma is an extension of {\cite[Lemma 5]{Haase=Musiker=Yu}}.

\begin{rem}[\cite{Song}]
	\label{condition to be extremal}
\upshape{
Let $f$ be in ${\rm Rat}(\Gamma)$.
Then, $f$ is an extremal of $R(D)^K$ if and only if there are not two proper $K$-invariant subgraphs $\Gamma_1$ and $\Gamma_2$ covering $\Gamma$ such that each can fire on $D + {\rm div}(f)$.
}
\end{rem}

If $D$ is $K$-invariant, $R(D)^K/\boldsymbol{R}$ is naturally identified with the subset $|D|^K$ of $|D|$ consisting of all $K$-invariant elements of $|D|$.
In fact, let $D$ be a $K$-invariant effective divisor on $\Gamma$. 
For any $D^{\prime} \in |D|^K$, there exists $f \in R(D)^K$ such that $D^{\prime}=D + {\rm div}(f)$.
Since both $D$ and $D^{\prime}$ are $K$-invariant, $D^{\prime} = D + {\rm div}(f \circ \sigma)$ for any $\sigma \in K$.
Thus $0 = {\rm div}(f) - {\rm div}(f \circ \sigma) = {\rm div}(f - f \circ \sigma)$ and there exists $c \in \boldsymbol{R}$ such that $f - f \circ \sigma = c$, {\it {\it i.e.}} $f = c \odot f \circ \sigma$ by Liouville's theorem.
Since the order $k$ of $\sigma$ is finite, $f = c \odot \cdots \odot c \odot f$ holds, where c is multiplied $k$ times.
As $k$ is not zero, $c$ must be zero.
Therefore $f$ is $K$-invariant and then $f \in R(D)^K$.
Conversely, $[g] \in R(D)^K / \boldsymbol{R}$ corresponds to an element $D + {\rm div}(g)$ in $|D|^K$.



Let $\Gamma$ be a metric graph, $K$ a finite group acting on $\Gamma$ and $D$ an effective divisor on $\Gamma$.
In this subsection, we prove that $R(D)^K$ is finitely generated as a tropical semimodule in two different ways from that in Subsection $4.1$.
In one of them, we return arguments about effective divisors to ones about $K$-invariant divisors to use the condition of generators of $R(D)$ found by Haase, Musiker and Yu.
In the other, we find generators of $R(D)^K$ by an algebraic way.
By later proof, we know that the number of generators of $R(D)^K$ is not greater than that of $R(D)$.

Let $D_1$ be the maximum $K$-invariant part of $D$, {\it {\it i.e.}} $D_1 := \sum_{x \in \Gamma}{\rm min}_{x^{\prime} \in Kx}\{D(x^{\prime})\} \cdot x$.
By the definition, both $D_1$ and the $K$-variant part $D_2 := D - D_1$ are effective.

\begin{lemma}
	\label{important lemma}
$R(D)^K = R(D_1)^K$ holds.
\end{lemma}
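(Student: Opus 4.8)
The plan is to prove the two inclusions $R(D)^K \subseteq R(D_1)^K$ and $R(D_1)^K \subseteq R(D)^K$ separately. The second is immediate: since $D_1(x) = \min_{x' \in Kx}\{D(x')\} \le D(x)$ for every point $x$, we have $D_1 \le D$ pointwise, so any $K$-invariant $f$ satisfying $D_1 + {\rm div}(f) \ge 0$ automatically satisfies $D + {\rm div}(f) \ge D_1 + {\rm div}(f) \ge 0$; together with the common element $-\infty$ this yields $R(D_1)^K \subseteq R(D)^K$. Both sides consist of $K$-invariant functions, so the containment is consistent.

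For the reverse inclusion the key fact I would isolate first is that for any $K$-invariant rational function $f$, the principal divisor ${\rm div}(f)$ is itself $K$-invariant, that is, ${\rm ord}_{\sigma x}(f) = {\rm ord}_x(f)$ for all $\sigma \in K$ and all $x$. This follows because $K$ acts isometrically and $f \circ \sigma = f$: for a finite point $x$, each half-edge $h$ at $x$ is carried isometrically by $\sigma$ to a half-edge at $\sigma x$, and since $f(\sigma y) = f(y)$ while $\sigma$ preserves distances, the outgoing slope of $f$ along $h$ equals the outgoing slope of $f$ along $\sigma h$; summing over all half-edges at $x$ gives ${\rm ord}_x(f) = {\rm ord}_{\sigma x}(f)$. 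The case of points at infinity is handled the same way using the single outgoing slope there. Hence ${\rm ord}_\cdot(f)$ is constant on each $K$-orbit.

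Granting this, take any $f \in R(D)^K$ and fix a point $x$. Choose $x_0 \in Kx$ at which the orbit-minimum defining $D_1$ is attained, so that $D_1(x) = D(x_0)$. Since ${\rm ord}_\cdot(f)$ is orbit-constant we have ${\rm ord}_x(f) = {\rm ord}_{x_0}(f)$, and therefore $D_1(x) + {\rm ord}_x(f) = D(x_0) + {\rm ord}_{x_0}(f) \ge 0$, the inequality holding because $D + {\rm div}(f) \ge 0$ in particular at $x_0$. As $x$ is arbitrary, $D_1 + {\rm div}(f) \ge 0$, so $f \in R(D_1)^K$, which gives $R(D)^K \subseteq R(D_1)^K$ and completes the proof.

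I expect the only delicate point to be the orbit-invariance of ${\rm div}(f)$, where one must argue carefully — using that the $K$-action is isometric — that the family of outgoing slopes of $f$ at $x$ matches that at $\sigma x$; once this is established the remainder is a direct pointwise comparison exploiting that $D_1$ is, by construction, the orbit-minimum of $D$. A minor amount of additional care is needed to accommodate the values $\pm\infty$ at points at infinity, but these are covered by the same orbit-matching argument together with the convention that $-\infty$ lies in both $R(D)^K$ and $R(D_1)^K$.
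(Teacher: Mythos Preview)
Your proof is correct and follows essentially the same approach as the paper: both establish the two inclusions, with the nontrivial direction resting on the fact that ${\rm div}(f)$ is constant on $K$-orbits for $K$-invariant $f$, combined with the orbit-minimum definition of $D_1$. Your presentation is in fact cleaner---you argue directly at the point $x_0$ where the minimum is attained, whereas the paper phrases the same step as a contradiction and leaves the orbit-invariance of ${\rm div}(f)$ implicit.
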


\begin{proof}
$D_1 + {\rm div}(f_1) \ge 0$ holds for any element $f_1$ of $R(D_1)^K$.
Therefore $D + {\rm div}(f_1) = D_2 + (D_1 + {\rm div}(f_1)) \ge 0$, {\it {\it i.e.}} $f_1 \in R(D_1)^K$.

For arbitrary element $f$ of $R(D)^K$, the set of poles of $f$ is $K$-invariant as $f$ is $K$-invariant and the set is contained in the support of $D_1$.
This means $D_1 + {\rm div}(f) \ge 0$.
In fact, if $D_1 + {\rm div}(f) < 0$ holds, then there exists a point $x$ on $\Gamma$ whose orbit by $K$ is a subset of ${\rm supp}(D_1 + {\rm div}(f))$.
Therefore $0 > D_2 + (D_1 + {\rm div}(f)) = D + {\rm div}(f)$ holds and this contradicts to $f \in R(D)^K$.
\end{proof}

By Lemma \ref{important lemma}, we can prove $(1) , (3)$ of the following theorem in the same way of the proof of {\cite[Lemma 6]{Haase=Musiker=Yu}} and $(2)$ clearly holds by {\cite[Lemma 6]{Haase=Musiker=Yu}}.

\begin{thm}
	\label{main theorem'}
In the above situation, the following hold\,:
\begin{itemize}
\item[$(1)$]
$S_G(D_1)^K$ contains all the extremals of $R(D_1)^K$,

\item[$(2)$]
$S_G(D_1)^K$ is finite modulo tropical scaling (except by $-\infty)$, and

\item[$(3)$]
$S_G(D_1)^K$ generates $R(D_1)^K$ as a tropical semimodule.
\end{itemize}
\end{thm}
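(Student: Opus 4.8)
The plan is to lean on Lemma \ref{important lemma}, which turns every assertion about $R(D)^K$ into the same assertion about $R(D_1)^K$ for the \emph{$K$-invariant} divisor $D_1$. The whole point of passing to $D_1$ is that once the reference divisor is $K$-invariant, the Haase--Musiker--Yu apparatus of Remark \ref{finitely generation of R(D)} and Remark \ref{finitely generation of R(D)2} can be run equivariantly: for any $K$-invariant $f$ the divisor $D_1 + {\rm div}(f)$ is again $K$-invariant, so its support is a $K$-invariant subset of $\Gamma$. Throughout I would fix a $K$-invariant model $G$ of $\Gamma$ whose vertex set contains ${\rm supp}(D_1)$ (legitimate because $K$ acts isometrically and hence preserves valences), and read $S_G(D_1)^K$ as $S_G(D_1) \cap R(D_1)^K$, the set of $K$-invariant functions in $S_G(D_1)$. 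Statement $(2)$ is then immediate and I would dispose of it first: $S_G(D_1)^K$ is a subset of $S_G(D_1)$, and the latter is finite modulo tropical scaling by Remark \ref{finitely generation of R(D)2}$(2)$, so the subset is too.

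For $(1)$ I would argue by contraposition, imitating the proof of Remark \ref{finitely generation of R(D)}$(1)$ while keeping everything $K$-invariant. Suppose $f \in R(D_1)^K$ is \emph{not} in $S_G(D_1)$; then the $K$-invariant support of $D_1 + {\rm div}(f)$ contains an interior cut set $C$. Replacing $C$ by its orbit $KC$, which is still contained in the support and is still an interior cut set because $G$ is $K$-invariant, I may assume $C$ itself is $K$-invariant. Every point of $C$ is smooth and carries at least one chip, so exactly as in Haase--Musiker--Yu any partition of the connected components of $\Gamma \setminus C$ into two nonempty classes produces two proper subgraphs covering $\Gamma$, each of which can fire on $D_1 + {\rm div}(f)$. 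The task is to choose this partition to be $K$-invariant, so that both subgraphs are $K$-invariant; Remark \ref{condition to be extremal} then shows that $f$ is not an extremal of $R(D_1)^K$, and the contrapositive yields $(1)$.

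The hard part will be producing that $K$-invariant two-colouring of the components of $\Gamma \setminus C$. When $K$ acts on these components with at least two orbits there is nothing to do: colour one orbit-union with the first colour and the remainder with the second, and both subgraphs are $K$-invariant. The genuine obstacle is the case in which $K$ permutes the components transitively, so that no nontrivial $K$-invariant partition exists; note that small $K$-invariant neighbourhoods of $C$, or their complements, do not help because their boundary points carry no chips. I expect to resolve this by exploiting the freedom in the model: if the cut points are exchanged by $K$ so transitively that no invariant splitting is available, they can be absorbed into $V(G)$, whence $C$ ceases to be an \emph{interior} cut set and the hypothesis $f \notin S_G(D_1)$ cannot have held. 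Establishing this dichotomy cleanly---transitivity on components forces the cut points to be vertices of a suitable $K$-invariant model---is the crux of the argument, and it is exactly the phenomenon one already sees for a circle with an order-two rotation, where the constant function is extremal and the model must place the two antipodal chips at vertices.

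Finally, for $(3)$ I would prove generation by descent, once again staying inside $R(D_1)^K$. Given a $K$-invariant $f$, if it is extremal it already lies in $S_G(D_1)^K$ by $(1)$. Otherwise Remark \ref{condition to be extremal} furnishes two proper $K$-invariant subgraphs $\Gamma_1, \Gamma_2$ covering $\Gamma$ that fire on $D_1 + {\rm div}(f)$; firing them gives $K$-invariant functions $g_1, g_2 \in R(D_1)^K$ (the firing moves inherit $K$-invariance from $\Gamma_1, \Gamma_2$ and $f$) with $f = g_1 \oplus g_2$ and with strictly larger associated divisors. Iterating, and using the finiteness from $(2)$ to bound the process so that it terminates, one writes $f$ as a tropical combination of extremals, all of which lie in $S_G(D_1)^K$ by $(1)$; this is $(3)$. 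Equivalently, once $(1)$ and $(2)$ are secured one can invoke Remark \ref{generator is extremal}, since a finitely generated $K$-invariant subsemimodule is generated by its extremals, and $(1)$ places those extremals in the finite set $S_G(D_1)^K$.
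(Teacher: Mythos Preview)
Your plan coincides with the paper's: the paper's entire proof is the single sentence that $(1)$ and $(3)$ are proved ``in the same way'' as \cite[Lemma~6]{Haase=Musiker=Yu} once Lemma~\ref{important lemma} replaces $D$ by the $K$-invariant $D_1$, and that $(2)$ is immediate from \cite{Haase=Musiker=Yu}. Your treatment of $(2)$ and of the non-transitive branch of $(1)$ is exactly this, carried out in detail.

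The gap is precisely where you place it, but your proposed resolution does not close it. You argue that if $K$ acts transitively on the components of $\Gamma\setminus C$ then the cut points ``can be absorbed into $V(G)$'', so that $C$ was never an interior cut set. This fails because $G$ is fixed at the outset while $C$ depends on $f$, and your hypotheses on $G$ ($K$-invariant, $V(G)\supset{\rm supp}(D_1)$) do not force $C\subset V(G)$. Concretely: let $\Gamma$ be a circle, $K=\langle\sigma\rangle\cong\boldsymbol{Z}/2\boldsymbol{Z}$ acting by the half-turn, $D_1=2p+2\sigma(p)$, and take $V(G)=\{p,\sigma(p)\}$, which meets your requirements. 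Let $m_1,m_2$ be the midpoints of the two arcs and let $f\in R(D_1)^K$ be the $K$-invariant piecewise-linear function with $f(p)=f(\sigma(p))=0$, $f(m_1)=f(m_2)=-\tfrac12$ and slopes $\pm1$, so that $D_1+{\rm div}(f)=2m_1+2m_2$. Any subgraph that can fire on $2m_1+2m_2$ has boundary contained in $\{m_1,m_2\}$; the only \emph{proper $K$-invariant} such subgraph is $\{m_1,m_2\}$ itself, and two copies of it do not cover $\Gamma$. Hence by Remark~\ref{condition to be extremal} $f$ is extremal in $R(D_1)^K$, yet $\{m_1,m_2\}$ is an interior cut set for this $G$, so $f\notin S_G(D_1)^K$ and $(1)$ fails; the same example breaks your descent for $(3)$. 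Thus either $G$ must be specified more restrictively than you propose, or the transitive case requires a genuinely different argument; the paper's one-line proof does not address this point, and your dichotomy does not establish it.
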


The following theorem is proved with a purely algebraic way from stacked point of view.

\begin{thm}
	\label{main theorem''}
Let $\Gamma$ be a metric graph, $K$ be a finite group acting on $\Gamma$ and $D$ a $K$-invariant effective divisor on $\Gamma$.
For a minimal generating set $\{ f_1, \ldots, f_n \}$ of $R(D)$, $\{ g_1, \ldots, g_n \}$ is a generating set of $R(D)^K$, where $g_i := ``\sum_{\sigma \in K}f_i \circ \sigma\text{''}$.
\end{thm}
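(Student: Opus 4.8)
The plan is to show that the tropical-averaging operator $f \mapsto ``\sum_{\sigma \in K} f \circ \sigma\text{''}$, which is max-linear, carries a generating set of $R(D)$ to a generating set of the invariant submodule $R(D)^K$. First I would verify that each $g_i$ genuinely lands in $R(D)^K$. Since $D$ is $K$-invariant and each $f_i \in R(D)$, every translate $f_i \circ \sigma$ lies in $R(D)$ as well (because $D + {\rm div}(f_i \circ \sigma) = \sigma^{*}(D + {\rm div}(f_i)) \ge 0$). The tropical sum $g_i = \bigoplus_{\sigma \in K} f_i \circ \sigma$ is then again in $R(D)$ because $R(D)$ is a tropical semimodule, and it is manifestly $K$-invariant: precomposing the finite max over the group $K$ with any $\tau \in K$ merely permutes the summands, so $g_i \circ \tau = g_i$. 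Hence $g_i \in R(D)^K$.

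Next I would prove that $\{g_1, \ldots, g_n\}$ spans $R(D)^K$. Take any $h \in R(D)^K$. Using that $\{f_1, \ldots, f_n\}$ generates $R(D)$ and that $h \in R(D)$, write $h = \bigoplus_{i} ``\lambda_i f_i\text{''}$ for suitable scalars $\lambda_i \in \boldsymbol{T}$, where the tropical scalar multiple means the pointwise sum $\lambda_i + f_i$. Now apply the averaging idea to $h$ itself: because $h$ is already $K$-invariant, $h = ``\sum_{\sigma \in K} h \circ \sigma\text{''}$ up to the constant coming from the group order (here it is cleanest to note $h \circ \sigma = h$ for each $\sigma$, so taking the tropical sum over $\sigma$ reproduces $h$ exactly, the max of identical functions being that function). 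I would then push the group sum through the representation of $h$: since $(\,\cdot\,)\circ\sigma$ is a semimodule homomorphism and tropical sum is commutative and associative, $h = ``\sum_{\sigma}\bigl(\bigoplus_i ``\lambda_i f_i\text{''}\bigr)\circ\sigma\text{''} = \bigoplus_i ``\lambda_i \bigl(\sum_{\sigma} f_i \circ \sigma\bigr)\text{''} = \bigoplus_i ``\lambda_i g_i\text{''}$, exhibiting $h$ as a tropical-linear combination of the $g_i$. This is the crux of the argument.

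The step I expect to be the main obstacle is the interchange of the two tropical sums, namely justifying $``\sum_{\sigma}\bigl(\bigoplus_i ``\lambda_i f_i\text{''}\bigr)\circ\sigma\text{''} = \bigoplus_i ``\lambda_i g_i\text{''}$ at the level of functions taking values in $\boldsymbol{T} = \boldsymbol{R}\cup\{-\infty\}$. The identity $\max_\sigma \max_i (\lambda_i + f_i(\sigma^{-1}x)) = \max_i (\lambda_i + \max_\sigma f_i(\sigma^{-1}x))$ is the commutativity of a double maximum, which is routine for real values; the care is entirely in the boundary values at points at infinity, where one must invoke the conventions for $\pm\infty$ recorded in Section $2$ and confirm that precomposition by the isometries $\sigma$ preserves membership in $R(D)$ pointwise. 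I would dispatch this by checking that $(\,\cdot\,) \circ \sigma$ is an honest semimodule homomorphism $R(D) \to R(D)$ and that finite tropical sums commute with it, the finiteness of $K$ guaranteeing there are no convergence issues. Finally, I would remark that this gives a generating set of cardinality at most $n$, recovering the bound advertised before the theorem that $R(D)^K$ needs no more generators than $R(D)$.
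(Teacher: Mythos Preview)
Your proposal is correct and follows essentially the same approach as the paper's own proof: verify $g_i \in R(D)^K$ via the $K$-invariance of $D$ and closure of $R(D)$ under tropical sum, then for any $h \in R(D)^K$ use $h = ``\sum_{\sigma \in K} h\circ\sigma\text{''}$ together with the expansion $h = ``\sum_i \lambda_i f_i\text{''}$ and commute the two (finite) tropical sums to obtain $h = ``\sum_i \lambda_i g_i\text{''}$. Your additional remarks on the interchange of maxima and the boundary conventions at points at infinity are more careful than, but consistent with, the paper's one-line computation.
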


\begin{proof}
For any $i$ and $\tau \in K$,
\[
g_i \circ \tau = `` \left( \sum_{\sigma \in K}f_i \circ \sigma \right) \circ \tau\text{''} = ``\sum_{\sigma \in K}f_i \circ (\sigma \circ \tau)\text{''} = ``\sum_{\sigma \in K}f_i \circ \sigma\text{''} = g_i
\]
and
\[
0 \leq \tau (D + {\rm div}(f_i)) = \tau(D) + {\rm div}(f_i \circ \tau) = D + {\rm div}(f_i \circ \tau)
\]
hold.
Thus, each $g_i$ is in $R(D)^K$.
For any element $g=``\sum_{i=1}^{n}a_i f_i\text{''} \in R(D)^K$,
\[
g = ``\sum_{\sigma \in K}g \circ \sigma\text{''} = ``\sum_{\sigma \in K} \left( \sum_{i=1}^{n} a_i f_i \right) \circ \sigma\text{''} = ``\sum_{i=1}^{n} a_i \left( \sum_{\sigma \in K} f_i \circ \sigma \right) \text{''} = ``\sum_{i=1}^{n} a_i g_i\text{''}.
\]
Hence $\{ g_1, \ldots, g_n \}$ generates $R(D)^K$.
\end{proof}

\begin{cor}
Let $\Gamma$ be a metric graph, $K$ be a finite group acting on $\Gamma$ and $D$ an effective divisor on $\Gamma$.
For a minimal generating set $\{ f_1, \ldots, f_n \}$ of $R(D_1)$, $\{ g_1, \ldots, g_n \}$ is a generating set of $R(D)^K=R(D_1)^K$, where $g_i := ``\sum_{\sigma \in K}f_i \circ \sigma\text{''}$.
\end{cor}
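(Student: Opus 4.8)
The plan is to deduce this statement formally by combining Lemma \ref{important lemma} with Theorem \ref{main theorem''}. First I would verify that the maximum $K$-invariant part $D_1 = \sum_{x \in \Gamma} {\rm min}_{x^{\prime} \in Kx}\{D(x^{\prime})\} \cdot x$ is itself $K$-invariant, which is precisely what licenses the application of Theorem \ref{main theorem''} to $D_1$. This is immediate from the definition: for any $\tau \in K$ and any point $x$, one has $K(\tau x) = Kx$, so the coefficient of $\tau x$ in $D_1$ equals ${\rm min}_{x^{\prime} \in Kx}\{D(x^{\prime})\}$, which is the same as the coefficient of $x$; hence $\tau(D_1) = D_1$. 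Since $D_1$ is effective by construction, it satisfies all the hypotheses of Theorem \ref{main theorem''}.

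Next I would apply Theorem \ref{main theorem''} directly to the $K$-invariant effective divisor $D_1$ together with the given minimal generating set $\{ f_1, \ldots, f_n \}$ of $R(D_1)$. The theorem then asserts that $\{ g_1, \ldots, g_n \}$, with $g_i := ``\sum_{\sigma \in K}f_i \circ \sigma\text{''}$, is a generating set of $R(D_1)^K$ as a tropical semimodule.

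Finally, I would invoke Lemma \ref{important lemma}, which gives the equality $R(D)^K = R(D_1)^K$. Combining this with the previous step shows that $\{ g_1, \ldots, g_n \}$ generates $R(D)^K$ as well, which is exactly the claim. I expect essentially no obstacle here, since the corollary is a purely formal splice of the two earlier results; the only point demanding a moment's care is confirming the $K$-invariance of $D_1$ so that Theorem \ref{main theorem''} applies, and this follows at once from $D_1$ being the orbit-wise minimum of $D$.
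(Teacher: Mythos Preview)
Your proposal is correct and matches the paper's intent: the corollary is stated there without proof, as an immediate splice of Lemma~\ref{important lemma} ($R(D)^K = R(D_1)^K$) with Theorem~\ref{main theorem''} applied to the $K$-invariant divisor $D_1$. Your explicit verification that $D_1$ is $K$-invariant (needed to invoke Theorem~\ref{main theorem''}) is the only nontrivial check, and you handle it correctly.
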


\begin{rem}
{\upshape
$\{ g_1, \ldots, g_n \}$ is not always minimal.
Using Lemma 	\ref{condition to be extremal}, we can obtain a minimal generating set by omitting elements not being extremal from $\{ g_1 , \ldots, g_n \}$.
}
\end{rem}



\subsection{Galois covering on metric graphs}

Let $\Gamma$ be a metric graph and $K$ a finite group acting on $\Gamma$.
We define $V_1(\Gamma)$ as the set of points $x$ on $\Gamma$ such that there exists a point $y$ in any neighborhood of $x$ whose stabilizer is not equal to that of $x$.

\begin{rem}[\cite{Song}]
\upshape{
	\label{$V_1$ is finite}
$V_1(\Gamma)$ is a finite set.
}
\end{rem}

We set $(G_0, l_0)$ as the canonical loopless model of $\Gamma$.
By Lemma \ref{$V_1$ is finite}, we obtain the model $(\widetilde{G_1}, \widetilde{l_1})$ of $\Gamma$ by setting the $K$-orbit of the union of $V(G_0)$ and $V_1(\Gamma)$ as the set of vertices $V(\widetilde{G_1})$.
Naturally, we can regard that $K$ acts on $V(\widetilde{G_1})$ and also on $E(\widetilde{G_1})$.
Thus, the sets $V(\widetilde{G^{\prime}})$ and $E(\widetilde{G^{\prime}})$ are defined as the quotient sets of $V(\widetilde{G_1})$ and $E(\widetilde{G_1})$ by $K$, respectively.
Let $\widetilde{G^{\prime}}$ be the graph obtained by setting  $V(\widetilde{G^{\prime}})$ as the set of vertices and $E(\widetilde{G^{\prime}})$ as the set of edges.
Since $\widetilde{G_1}$ is connected, $\widetilde{G^{\prime}}$ is also connected.
We obtain the loopless graph $G^{\prime}$ from $\widetilde{G^{\prime}}$ and the loopless model $(G_1, l_1)$ of $\Gamma$ from the inverse image of $V(G^{\prime})$ by the map defined by $K$.
Note that $V(G_1)$ contains $V(\widetilde{G_1})$.
Since $K$ is a finite group acting on $\Gamma$, the length function $l^{\prime} : E(G^{\prime}) \rightarrow {\boldsymbol{R}}_{> 0} \cup \{ \infty \}$, $[e] \mapsto |K_e| \cdot l_1(e)$ is well-defined, where $[e]$ and $K_e$ mean the equivalence class of $e$ and the stabilizer of $e$, respectively.
Let $\Gamma^{\prime}$ be the metric graph obtained from $(G^{\prime}, l^{\prime})$.
Then, $\Gamma^{\prime}$ is the quotient metric graph of $\Gamma$ by $K$.

For any edge $e$ of $G_1$, by the Orbit-Stabilizer formula, $|K_e|$ is a positive integer.
Thus, for $(G_1, l_1)$ and $(G^{\prime}, l^{\prime})$, there exists only one morphism $\pi : \Gamma \rightarrow \Gamma^{\prime}$ that satisfies ${\rm deg}_e(\pi) = |K_e|$ for any edge $e$ of $G_1$.


\begin{rem}[\cite{Song}]
\upshape{
$\pi$ is a finite harmonic morphism of degree $|K|$.
}
\end{rem}

Note that whether $\Gamma$ is a singleton or not agrees with whether $\Gamma^{\prime}$ is a singleton.

Let $\varphi : \Gamma \rightarrow \Gamma^{\prime}$ be a finite harmonic morphism of metric graphs.
We write the isometry transformation group of $\Gamma$ as ${\rm Isom}(\Gamma)$, {\it i.e.} ${\rm Isom}(\Gamma) := \{ \sigma : \Gamma \rightarrow \Gamma \,|\, \sigma \text{ is an isometry} \}$.

\begin{dfn}
	\label{branched $K$-Galois1}
{\upshape
Assume that a map $\varphi : \Gamma \rightarrow \Gamma^{\prime}$ between metric graphs and an action on $\Gamma$ by $K$ are given.
Then, $\varphi$ is a {\it $K$-Galois covering} on $\Gamma^{\prime}$ if $\varphi$ is a harmonic morphism of metric graphs, the degree of $\varphi$ is coincident with the order of $K$ and the action on $\Gamma$ by $K$ induces a transitive action on every fibre by $K$.
$K$ is called the {\it Galois group} of $\varphi$.
}
\end{dfn}

If $\varphi$ is a $K$-Galois covering, then $\varphi$ is finite since $K$ transitively acts on ever fibre.


\begin{rem}
{\upshape
A $K$-Galois covering can be $K^{\prime}$-Galois for a finite group $K^{\prime}$ which is not conjugate to $K$.
}
\end{rem}


\begin{lemma}
	\label{There exists an isomorphism.}
There exists a finite harmonic morphism of degree one ({\it i.e.} an isomorphism) $\psi$ from the quotient metric graph $\Gamma/K$ to $\Gamma^{\prime}$ which satisfies $\varphi = \psi \circ \pi$, where $\pi : \Gamma \rightarrow \Gamma / K$ is the natural surjection.
\end{lemma}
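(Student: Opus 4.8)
The plan is to first construct $\psi$ set-theoretically as the unique map through which $\varphi$ factors, to show it is a bijection, and then to upgrade it to an isometry by comparing the local degrees of $\varphi$ and $\pi$.

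First I would observe that, because $\varphi$ is a $K$-Galois covering in the sense of Definition \ref{branched $K$-Galois1}, the group $K$ permutes every fibre of $\varphi$; in particular $\varphi \circ \sigma = \varphi$ for every $\sigma \in K$, so $\varphi$ is constant on each $K$-orbit. Hence $\varphi$ factors uniquely through the natural surjection $\pi$, giving a map $\psi : \Gamma/K \to \Gamma^{\prime}$ with $\varphi = \psi \circ \pi$ and $\psi([x]) = \varphi(x)$. The transitivity of the $K$-action on each fibre, together with the $K$-invariance just noted, shows that the fibres of $\varphi$ are exactly the $K$-orbits, which are exactly the fibres of $\pi$; thus $\psi$ is injective. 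Since $\Gamma$ and $\Gamma^{\prime}$ are either both singletons (a trivial case) or $\varphi$ is a finite harmonic morphism of positive degree and hence surjective, $\psi$ is also surjective. Continuity of $\psi$ is immediate from continuity of $\varphi$ and the fact that $\pi$ is a topological quotient map.

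Next I would show that $\psi$ is a local isometry, whence an isomorphism. The key computation is that $\varphi$ and $\pi$ have the same local degrees. For a point $x$, the fibre $\varphi^{-1}(\varphi(x))$ is the orbit $Kx$, of cardinality $|K|/|K_x|$; since each $\sigma \in K$ is an isometry with $\varphi \circ \sigma = \varphi$, all points of the orbit share a common local degree $d$, and ${\rm deg}(\varphi) = |K|$ forces $(|K|/|K_x|)\, d = |K|$, i.e. ${\rm deg}_x(\varphi) = |K_x|$; the same orbit-counting argument gives ${\rm deg}_x(\pi) = |K_x|$. Working in a common refinement of the model $(G_1, l_1)$ used to build $\pi$ and of a model adapted to $\varphi$ --- and using that the insertion of $V_1(\Gamma)$ into the vertex set makes the $K$-stabilizer constant along the interior of each edge $e$, equal to $K_e$ --- harmonicity at interior points yields ${\rm deg}_e(\varphi) = |K_e| = {\rm deg}_e(\pi)$ for every edge $e$. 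Since $\pi|_e$ is a homeomorphism onto the edge $[e]$ of $\Gamma/K$ stretching lengths by the factor $|K_e|$, the relation $\varphi|_e = \psi|_{[e]} \circ \pi|_e$ shows that $\psi|_{[e]}$ is a dilation of factor ${\rm deg}_e(\varphi)/|K_e| = 1$, i.e. an isometry onto its image in $\Gamma^{\prime}$.

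Finally, a bijective local isometry between metric graphs is an isometry, which is precisely a finite harmonic morphism of degree one, so $\psi$ is the desired isomorphism satisfying $\varphi = \psi \circ \pi$. I expect the main obstacle to be the local-degree bookkeeping of the middle step: pinning down ${\rm deg}_e(\varphi) = |K_e|$ requires simultaneously choosing models for $\Gamma$, $\Gamma/K$ and $\Gamma^{\prime}$ on which all three maps are morphisms, and ruling out edge-reversals by the stabilizer (handled by the insertion of $V_1(\Gamma)$ into the vertex set). Once the local degrees are matched, the degree-one conclusion could alternatively be read off from the multiplicativity ${\rm deg}(\varphi) = {\rm deg}(\psi)\,{\rm deg}(\pi)$, but the local-isometry route avoids having to verify separately that $\psi$ is harmonic.
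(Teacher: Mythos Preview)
Your proposal is correct and follows essentially the same route as the paper: define $\psi([x])=\varphi(x)$, use the orbit--stabilizer count together with $\deg(\varphi)=|K|$ to get $\deg_e(\varphi)=|K_e|=\deg_e(\pi)$, and conclude that $\psi$ has dilation factor $1$ on every edge. The paper's proof is terser---it computes $\deg_e(\varphi)=|K_e|$ directly by summing over the edge orbit $Ke$ rather than passing through point degrees, and it omits the explicit bijectivity check---but the substance is the same.
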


\begin{proof}
Let $(G, l), (G^{\prime}, l^{\prime})$ and $(G^{\prime \prime}, l^{\prime \prime})$ be models of $\Gamma, \Gamma^{\prime}$ and $\Gamma / K$ corresponding to $\varphi$ and $\pi$, respectively.
Let $\psi : \Gamma / K \rightarrow \Gamma^{\prime}$ be a map defined by $[x] \mapsto \varphi(x)$.
Since $[x] = Kx \subset \Gamma$ and $\varphi$ is $K$-Galois, $\varphi(Kx) = \varphi(x)$ holds.
Thus $\psi$ is well-defined.
By the definition of $\psi$, $\varphi = \psi \circ \pi$ holds.
As $\varphi$ is continuous, so is $\psi$.
For any edge $e \in E(G)$, since $\varphi$ is $K$-Galois,
\[
{\rm deg}(\varphi) = \sum_{\substack{e_1 \subset Ke \\ e_1 \in E(G)}} {\rm deg}_{e_1}(\varphi) = \sum_{\substack{e_1 \subset Ke \\ e_1 \in E(G)}} {\rm deg}_e(\varphi) = |Ke| \cdot {\rm deg}_e(\varphi).
\]
Therefore,
\[
{\rm deg}_e(\varphi) = \frac{{\rm deg}(\varphi)}{|Ke|} = \frac{|K|}{|Ke|} = |K_e|.
\]
For any edge $[e] \in E(G^{\prime \prime})$,
\[
\frac{l^{\prime \prime}([e])}{l^{\prime}(\varphi(e))} = \frac{l(e) \cdot {\rm deg}_e(\pi)}{l(e) \cdot {\rm deg}_e(\varphi)} = \frac{l(e) \cdot |K_e|}{l(e) \cdot |K_e|} = 1.
\]
Therefore, $\psi$ is a finite harmonic morphism of degree one.
\end{proof}

Note that $\pi$ in Lemma \ref{There exists an isomorphism.} is a $K$-Galois covering.

\subsection{Rational maps induced by $|D|^K$}

Several concepts and statements appearing in this subsection are based on Section $7$ of \cite{Haase=Musiker=Yu}. 

Let $\Gamma$ be a metric graph, $K$ a finite group acting on $\Gamma$ and let $D$ be a $K$-invariant effective divisor on $\Gamma$.
For a finite generating set ${\it F}=\{ f_1 , \ldots, f_n \}$ of $R(D)^K$, let $(G, l)$ be the model of $\Gamma$ such that $V(G) := V(G_0) \cup \bigcup_{i=1}^n ({\rm supp}(D + {\rm div}(f_i)))$, where $G_0$ is the underlying graph structure of the canonical loopless model $(G_0, l_0)$ of $\Gamma$.
$\phi_{\it F} : \Gamma \rightarrow \boldsymbol{TP^{n-1}}, x \mapsto (f_1 (x): \cdots: f_n (x))$ denotes the {\it rational map} induced by ${\it F}$.

\begin{prop}
	\label{image is a metric graph}
${\rm Im}(\phi_{\it F})$ is a metric graph in $\boldsymbol{TP}^{n-1}$.
\end{prop}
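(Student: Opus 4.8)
The plan is to verify the two features that a subset of $\boldsymbol{TP}^{n-1}$ must have in order to be a metric graph: that $\phi_{\it F}$ is a well-defined continuous map whose image is a finite, connected, one-dimensional union of segments, and that each of these segments is isometric to a segment of $\boldsymbol{T}$ for the lattice-length metric on $\boldsymbol{TP}^{n-1}$.

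First I would check well-definedness, namely that the vector $(f_1(x),\ldots,f_n(x))$ is never $(-\infty,\ldots,-\infty)$, so that $\phi_{\it F}(x)$ lands in $\boldsymbol{TP}^{n-1}$. The constant function $0$ lies in $R(D)^K$, since ${\rm div}(0)=0$ and $D$ is effective and $K$-invariant; because ${\it F}$ generates $R(D)^K$, we may write $0 = \bigoplus_{i} a_i \odot f_i$ with $a_i \in \boldsymbol{T}$, i.e.\ $\max_i (a_i + f_i(x)) = 0$ for every $x \in \Gamma$. For each $x$ the maximum is attained by some index $i$ with $a_i + f_i(x)=0$, which forces $f_i(x)$ to be finite; hence not all coordinates of $\phi_{\it F}(x)$ equal $-\infty$. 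Thus $\phi_{\it F}$ is a well-defined map into $\boldsymbol{TP}^{n-1}$, and it is continuous because each $f_i$ is continuous and the projection $\boldsymbol{T}^{n}\setminus\{(-\infty,\ldots,-\infty)\} \to \boldsymbol{TP}^{n-1}$ is continuous.

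Next I would produce the piecewise-linear structure. After refining the model $(G,l)$ so that every breakpoint of every $f_i$ (a point $x$ with ${\rm ord}_x(f_i)\neq 0$, of which there are only finitely many since ${\rm div}(f_i)$ has finite support) is a vertex, each $f_i$ is affine with integer slope along every edge $e$ of $G$. Parametrizing $e$ by arclength $t$, the homogeneous coordinates $f_i(t)$ are affine in $t$, so in any affine chart the coordinate differences $f_i(t)-f_j(t)$ are affine in $t$; therefore $\phi_{\it F}(e)$ is a single point (when all slopes agree) or a straight line segment in $\boldsymbol{TP}^{n-1}$, whose lattice length on a bounded edge is finite and is read off from the slope vector. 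On a leaf edge running to a point at infinity the image is still a closed segment of $\boldsymbol{T}$, now possibly of infinite length, attached along the boundary of a chart, which is consistent with the convention that the value $\infty$ occurs only on leaf edges.

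Finally I would assemble the pieces. Since $G$ has finitely many edges, ${\rm Im}(\phi_{\it F})$ is a finite union of segments and points, and it is connected because $\Gamma$ is connected and $\phi_{\it F}$ is continuous. Subdividing at the finitely many points where these segments meet or cross yields a finite graph whose edges are segments of $\boldsymbol{T}$ for the lattice-length metric of Lemma \ref{well-definedness of length}; this exhibits ${\rm Im}(\phi_{\it F})$ as a metric graph in $\boldsymbol{TP}^{n-1}$. The step I expect to be the main obstacle is precisely this last bookkeeping carried out on the image rather than on $\Gamma$: distinct edges of $\Gamma$ may be folded onto overlapping or mutually crossing segments, so one must argue that only finitely many such coincidences occur and that the resulting identification space genuinely satisfies the definition of a metric graph, together with the boundary analysis at points at infinity, where some coordinates degenerate to $-\infty$ and the segment must be checked to remain a well-defined segment of $\boldsymbol{T}$.
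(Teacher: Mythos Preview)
Your approach is correct and essentially the same as the paper's: well-definedness via the presence of constants in $R(D)^K$, affineness of each $f_i$ on edges of the chosen model so that each edge maps to a segment or point, and then the lattice-length metric on those segments. The paper's proof is considerably terser---it uses that the model $(G,l)$ was already chosen with $V(G)=V(G_0)\cup\bigcup_i\operatorname{supp}(D+\operatorname{div}(f_i))$, so no further refinement is needed, and it does not explicitly address the bookkeeping you flag (overlaps, crossings, behaviour at points at infinity), simply asserting that the distance between endpoint images ``can be measured'' and hence the image is a metric graph.
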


\begin{proof}
If $n=1$, then $\phi_{\it F}$ is a constant map from $\Gamma$ to $\boldsymbol{TP}^0$ and ${\rm Im}(\phi_{\it F})$ is a metric graph.

Let us assume that $n \ge 2$.
As $R(D)^K$ contains all constant functions, $\phi_{\it F}$ is well-defined.
Since all $f_i$s are $\boldsymbol{Z}$-affine function, the image of $\phi_{\it F}$ is a one-dimensional polyhedral complex.

Let $e=v_1 v_2$ be an edge of $G$.
As each $f_i$ is constant on $e$, $\phi_{\it F}(e)$ is a segment or a point in $\boldsymbol{TP}^{n-1}$ and the distant between $\phi_{\it F}(v_1)$ and $\phi_{\it F}(v_2)$ can be measured by the definition of rational functions on metric graphs.
Hence ${\rm Im}(\phi_{\it F})$ becomes a metric graph.
\end{proof}

When does $\phi_{\it F}$ induce a finite harmonic morphism from $\Gamma$ to ${\rm Im}(\phi_{\it F})$?
Moreover, when does $\phi_{\it F}$ induce a $K$-Galois covering on ${\rm Im}(\phi_{\it F})$?
We consider an answer for these questions.

\begin{rem}
	\label{homomorphism remark}
{\upshape
$R(D)^K$ is isomorphic to $R(D^{\prime})^K$ as a tropical semimodule for any element $D^{\prime}$ of $|D|^K$.
In fact, since $D^{\prime}$ is linearly equivalent to $D$ and both are $K$-invariant, there exists $f \in R(D)^K$ such that $D^{\prime} = D + {\rm div}(f)$ (see Subsection $3.1$).
$\psi : R(D)^K \rightarrow R(D^{\prime})^K$, $g \mapsto g - f$ and $\psi^{\prime} : R(D^{\prime})^K \rightarrow R(D)^K$, $g \mapsto g + f$ are homomorphisms of tropical semimodules and are inverses of each other.
}
\end{rem}

\begin{dfn}
{\upshape
Let $\Gamma$ be a metric graph, $K$ a finite group acting on $\Gamma$ and let $D$ be a $K$-invariant effective divisor on $\Gamma$.
$D$ is {\it $K$-very ample} if for any elements $x$ and $x^{\prime}$ of $\Gamma$ whose orbits by $K$ differ from each other, there exist $f$ and $f^{\prime}$ in $R(D)^K$ such that $f(x) - f(x^{\prime}) \not = f^{\prime}(x) - f^{\prime}(x^{\prime})$.
We call $D$ {\it $K$-ample} if some positive multiple $kD$ is $K$-very ample. 
When $K$ is trivial, we use words ``very ample'' or ``ample'' simply. 
}
\end{dfn}

\begin{rem}
{\upshape
$D$ is $K$-very ample if and only if $D^{\prime}$ is $K$-very ample for any element $D^{\prime}$ of $|D|^K$.
In fact, if $D$ is $K$-very ample, for any points $x$ and $x^{\prime}$ on $\Gamma$ whose orbits by $K$ differ from each other, there exist $g$ and $g^{\prime}$ in $R(D)^K$ such that $g(x) - g(x^{\prime}) \not= g^{\prime}(x) - g^{\prime}(x^{\prime})$.
Using $\psi$ given in Remark \ref{homomorphism remark}, $(g - f)(x) - (g - f)(x^{\prime}) = (g(x) - g(x^{\prime})) - (f(x) - f(x^{\prime})) \not= (g(x) - g(x^{\prime})) - (f^{\prime}(x) - f^{\prime}(x^{\prime})) = (g - f^{\prime})(x) - (g - f^{\prime})(x^{\prime})$.
Thus, $D^{\prime}$ is $K$-very ample.
The converse is shown in the same way.
}
\end{rem}

\begin{dfn}
{\upshape
Let ${\it F} = \{ f_1, \ldots, f_n\}$ be a finite generating set of $R(D)^K$.
$\phi_{\it F}$ is {\it $K$-injective} if $\phi_{\it F}$ separates different $K$-orbits on $\Gamma$, {\it {\it i.e.}} for any $x$ and $x^{\prime}$ in $\Gamma$ whose $K$-orbits differ from each other, $\phi_{\it F} (x) \not = \phi_{\it F} (x^{\prime})$ holds.
}
\end{dfn}

\begin{rem}
	\label{independent remark}
	\upshape{
Let ${\it F}_1 = \{ f_1, \ldots, f_n \}$ and ${\it F}_2 = \{g_1, \ldots, g_n \}$ be minimal generating sets of $R(D)^K$, respectively.
Since both ${\it F}_1$ and ${\it F}_2$ are minimal, each $g_i$ is written as $a_{i} \odot f_i$ with some real number $a_i$ by changing numbers if we need.
Thus, we can move ${\rm Im}(\phi_{{\it F}_1})$ to ${\rm Im}(\phi_{{\it F}_2})$ by the translation $(x_1 : \cdots : x_n) \mapsto (x_1 + a_1 : \cdots : x_n + a_n)$.
Hence $\phi_{{\it F}_1}$ is $K$-injective if and only if $\phi_{{\it F}_2}$ is $K$-injective.
}
\end{rem}

\begin{lemma}
$D$ is $K$-very ample if and only if the rational map associated to any finite generating set is $K$-injective.
\end{lemma}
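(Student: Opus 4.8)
The plan is to translate the condition $\phi_{\it F}(x) = \phi_{\it F}(x')$ into an explicit scaling relation among the values of the generators, and then read off both implications almost directly from the definitions. First I would record, from the definition of the equivalence $\sim$ on $\boldsymbol{T}^{n}$, that two points $\phi_{\it F}(x)$ and $\phi_{\it F}(x')$ coincide in $\boldsymbol{TP}^{n-1}$ precisely when there is a real number $\lambda$ with $f_i(x) - f_i(x') = \lambda$ for every $i$ (the $-\infty$ coordinates being forced to occur at the same indices for $x$ and $x'$, since adding a real $\lambda$ preserves $-\infty$ positions). Equivalently, $\phi_{\it F}(x) \ne \phi_{\it F}(x')$ holds exactly when $f_i(x) - f_i(x') \ne f_j(x) - f_j(x')$ for some pair of indices $i,j$.

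For the implication that $K$-injectivity forces $K$-very ampleness, I would fix one finite generating set ${\it F} = \{ f_1, \ldots, f_n \}$ of $R(D)^K$, which exists by Theorem \ref{main theorem''}. Given $x, x'$ whose $K$-orbits differ, $K$-injectivity of $\phi_{\it F}$ gives $\phi_{\it F}(x) \ne \phi_{\it F}(x')$, hence by the translation above there are indices $i,j$ with $f_i(x) - f_i(x') \ne f_j(x) - f_j(x')$. Taking $f := f_i$ and $f' := f_j$ in $R(D)^K$ then witnesses that $D$ is $K$-very ample.

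For the converse I would argue by contraposition: supposing $\phi_{\it F}$ fails to be $K$-injective for some finite generating set ${\it F}$, I obtain $x, x'$ with distinct orbits and $\phi_{\it F}(x) = \phi_{\it F}(x')$, so $f_i(x) - f_i(x') = \lambda$ for all $i$ and a common real $\lambda$. The key step is that every $f \in R(D)^K$ is a tropical linear combination of the generators, $f = \bigoplus_{i=1}^{n} a_i \odot f_i$, i.e.\ $f(y) = \max_i (a_i + f_i(y))$ pointwise. Evaluating at $x$ and substituting $f_i(x) = f_i(x') + \lambda$ yields $f(x) = \max_i(a_i + f_i(x') + \lambda) = \lambda + f(x')$, so $f(x) - f(x') = \lambda$ independently of the chosen $f$. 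Hence no pair $f, f'$ in $R(D)^K$ can satisfy $f(x) - f(x') \ne f'(x) - f'(x')$, so $D$ is not $K$-very ample, as required.

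The main obstacle I anticipate is the bookkeeping at points at infinity, where some coordinates $f_i(x)$ may equal $\pm\infty$: I must verify that the defining relation of $\sim$ really forces the $-\infty$ positions of $\phi_{\it F}(x)$ and $\phi_{\it F}(x')$ to agree, and that the shift-by-$\lambda$ computation inside the maximum survives when some terms $a_i + f_i(x')$ are infinite, using the conventions for $\pm\infty$ fixed in Section $2$. Once these conventions are in place the two directions close as above, and the fact that the conclusion does not depend on the particular generating set chosen is exactly Remark \ref{independent remark}.
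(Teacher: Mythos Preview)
Your proof is correct and follows essentially the same route as the paper: both directions hinge on the observation that $\phi_{\it F}(x)=\phi_{\it F}(x')$ amounts to a common shift $f_i(x)-f_i(x')=c$ for all $i$, and the ``only if'' direction is done by contraposition, extending the shift from generators to all of $R(D)^K$ via the tropical linear combination. The only difference is that you spell out the $\max$-computation $f(x)=\max_i(a_i+f_i(x')+\lambda)=\lambda+f(x')$ explicitly, whereas the paper simply invokes that $F$ generates $R(D)^K$; your added remark about $\pm\infty$ bookkeeping is a reasonable caveat that the paper leaves implicit.
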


\begin{proof}
(``if'' part)
Let ${\it F} = \{ f_1, \ldots, f_n \}$ be a generating set of $R(D)^K$.
Assume that $\phi_{\it F}$ is $K$-injective, {\it {\it i.e.}} for any $Kx \not= Kx^{\prime}$ , $\phi_{\it F}(x) \not= \phi_{\it F}(x^{\prime})$.
If for any $i$ and $j$, $f_i(x) - f_i(x^{\prime})=f_j(x) - f_j(x^{\prime})=:c$, then
\begin{eqnarray*}
\phi_{\it F}(x) &=& (f_1(x): \cdots: f_n(x)) = (f_1(x^{\prime}) + c: \cdots: f_n(x^{\prime}) + c)\\
&=& (f_1(x^{\prime}): \cdots: f_n(x^{\prime})) = \phi_{\it F}(x^{\prime}).
\end{eqnarray*}
This is a contradiction.
Thus there exist $i \not= j$ such that $f_i(x) - f_i(x^{\prime}) \not= f_j(x) - f_j(x^{\prime})$.

(``only if'' part)
Suppose that there exists a finite generating set ${\it F} = \{ f_1, \ldots, f_n \} \subset R(D)^K$ such that $\phi_{\it F}$ is not $K$-injective.
There exist distinct points $x$ and $x^{\prime}$ on $\Gamma$ whose $K$-orbits are different from each other and whose images by $\phi_{\it F}$ are same.
Therefore, there exists a real number $c$ such that $f_i(x^{\prime}) + c = f_i(x)$ for any $i$.
This means that $f_i(x) - f_i(x^{\prime}) = f_j(x) - f_j(x^{\prime})$ for any $i$ and $j$.
Hence for any $f$ and $f^{\prime}$ in $R(D)^K$, $f(x)-f(x^{\prime}) = f^{\prime}(x) - f^{\prime}(x^{\prime})$ since ${\it F}$ generates $R(D)^K$ as a tropical semimodule.
\end{proof}

If the induced rational map associated to a minimal generating set of $R(D)^K$ is $K$-injective, then $D$ is $K$-very ample since all generating sets of $R(D)^K$ contains a minimal generating set of $R(D)^K$ consisting only of extremals of $R(D)^K$ (see \cite{Song}).
Therefore, we obtain the following corollary.

\begin{cor}
$D$ is $K$-very ample if and only if the rational map associated to a minimal generating set of $R(D)^K$ is $K$-injective.
\end{cor}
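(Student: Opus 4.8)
The plan is to deduce this corollary directly from the preceding Lemma, treating a minimal generating set simply as a particular finite generating set. The only auxiliary facts I need are that $R(D)^K$ admits a minimal generating set at all, and that $K$-injectivity does not depend on which minimal generating set one picks. The former follows from the finite generation of $R(D)^K$ established in Theorem \ref{main theorem'} (equivalently Theorem \ref{main theorem''}); the latter is exactly Remark \ref{independent remark}, which shows that two minimal generating sets differ only by a coordinatewise tropical scaling, so that their associated rational maps differ by a translation of the tropical projective space, and hence one is $K$-injective precisely when the other is. Consequently the phrase ``a minimal generating set'' appearing in the statement is unambiguous, and $K$-injectivity for such a set is a genuine property of $D$.

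For the forward implication I would suppose $D$ is $K$-very ample. By the Lemma, the rational map associated to every finite generating set of $R(D)^K$ is $K$-injective; since a minimal generating set is in particular finite, the map $\phi_{\it F}$ associated to it is $K$-injective. For the reverse implication I would let ${\it F}$ be a minimal generating set for which $\phi_{\it F}$ is $K$-injective. Because ${\it F}$ is again a finite generating set, the ``if'' direction of the Lemma applies verbatim: for any $x,x'$ with $Kx \neq Kx'$ the failure of equality $f_i(x)-f_i(x') = f_j(x)-f_j(x')$ for some pair of members of ${\it F} \subset R(D)^K$ exhibits the two functions required by the definition of $K$-very ampleness. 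Thus $D$ is $K$-very ample.

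I do not expect a real obstacle, since all the mathematical content is already carried by the Lemma; the corollary is its specialization from ``any finite generating set'' to the distinguished case of a minimal one. The only steps needing attention are the bookkeeping around minimal generating sets: invoking finite generation so that such a set exists, and citing Remark \ref{independent remark} so that the conclusion is stated for $D$ itself rather than for one chosen generating set. With these in place the equivalence is immediate.
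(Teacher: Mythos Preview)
Your proposal is correct and essentially follows the paper's approach: both deduce the corollary immediately from the preceding Lemma, with the forward direction being trivial specialization. The only cosmetic difference is in the reverse implication---the paper notes that any finite generating set contains a minimal one (so $K$-injectivity for the minimal set propagates to all, and the Lemma applies as stated), whereas you observe that the Lemma's ``if'' argument already works for a single generating set and apply it directly; both are one-line observations.
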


\begin{lemma}
	\label{slope one lemma}
If $\Gamma$ dose not consist only of one point and for any point $x$ on $\Gamma$, there exists $f$ in $R(D)^K$ such that the support of $D + {\rm div}(f)$ contains the orbit of $x$ by $K$, then for any edge $e$ of $G$, there exists $f_i$ which has slope one on $e$.
\end{lemma}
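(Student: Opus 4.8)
The plan is to reduce the statement to a claim about the finite set of slopes that the generators take on $e$, and then to force the value $1$ into this set. First I would record the structural facts about the restriction to $e$. Since $V(G)$ contains the support of every $D+{\rm div}(f_i)$, no such divisor has a chip in the interior of $e$; assuming (as one may) that the model $G$ also refines ${\rm supp}(D)$, each $f_i$ is then affine on $e$ with a single integer slope $m_i$, and every $g\in R(D)^K$ is convex along $e$. Because $F$ generates $R(D)^K$ under $\oplus=\max$ and $\odot=+$, every $g\in R(D)^K$ equals $``\sum_i a_i f_i\text{''}=\max_i(a_i+f_i)$, so on each linear piece of $e$ the slope of $g$ is one of the $m_i$, while each $m_i$ is realized by $f_i$ itself. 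Hence the set $M$ of slopes occurring on $e$ among all functions of $R(D)^K$ is exactly $\{m_1,\dots,m_n\}$, and it suffices to show $1\in M$ after orienting $e$. Finally, the constant functions lie in $R(D)^K$, so $0\in M$.

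Next I would use the hypothesis to see that $e$ is not contracted. Choose an interior point $x$ of $e$. As $x\notin V(G)$ we have $D(x)=0$ and ${\rm ord}_x(f_i)=0$ for all $i$. By hypothesis there is $f\in R(D)^K$ with $Kx\subseteq{\rm supp}(D+{\rm div}(f))$, so ${\rm ord}_x(f)=(D+{\rm div}(f))(x)\ge 1$: the function $f$ has a genuine convex corner at $x$, and therefore at least two distinct slopes occur, i.e. $M\neq\{0\}$. Orienting $e$ suitably we may assume $M$ contains a positive integer, so $0\in M$ and $\max M\ge 1$.

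The crux is then to prove that $M$ is an interval of consecutive integers, for this immediately yields $1\in M$, i.e. some $f_i$ has slope one on $e$. My plan for this step is to show that whenever slopes $s$ and $s+2$ both occur on $e$ there is also a function in $R(D)^K$ of slope $s+1$ there. I would approach it through the extremal description of generators (Remark \ref{condition to be extremal}) together with the decomposition of a $K$-invariant rational function into an integer combination of chip-firing moves, each of which has slope in $\{0,\pm1\}$ on $e$: starting from a generator of slope $s+2$ one removes, symmetrically over the orbit $Ke$, a single slope-one chip-firing move and corrects by the flat generators, so as to keep $D+{\rm div}(\cdot)$ effective and $K$-invariant while lowering the slope on $e$ by one. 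I expect this last step to be the main obstacle, since the slope adjustment on $e$ is local while the constraints $D+{\rm div}(\cdot)\ge 0$ and $K$-invariance are global: the delicate point is to perform the chip-firing correction without creating a negative coefficient elsewhere on $\Gamma$. Here I would lean precisely on the criterion of Remark \ref{condition to be extremal}, which forbids two proper $K$-invariant subgraphs covering $\Gamma$ from both firing on $D+{\rm div}(f_i)$, to rule out the obstruction to effectivity and thereby close the interval argument.
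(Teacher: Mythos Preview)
Your first two paragraphs are correct and set up the problem exactly as needed: on the model $G$ each $f_i$ is affine on $e$, every $g\in R(D)^K$ takes slopes only from $M=\{m_1,\dots,m_n\}$ on pieces of $e$, $0\in M$, and the hypothesis forces some $m_j$ with $|m_j|\ge 2$. So the task is indeed to show $1\in M$.

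The gap is in your third paragraph. You propose to prove the stronger assertion that $M$ is an interval of consecutive integers, by removing a chip-firing move from a generator of slope $s+2$ and invoking Remark~\ref{condition to be extremal} to guarantee effectivity. But the extremal criterion does not furnish effectivity of a \emph{modified} function; it only characterizes when a given $f$ is extremal in $R(D)^K$. There is no mechanism in your outline that prevents the subtraction of a chip-firing move from creating a negative coefficient somewhere, and ``correcting by the flat generators'' does not help because adding a constant does not change ${\rm div}(\cdot)$. As stated, the crux step is a plan rather than an argument.

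The paper avoids the interval claim entirely and constructs a slope-one function directly. Take $f_j$ with slope at least two on $e$, say $f_j(v_1)>f_j(v_2)$. The truncation $(f_j)_{\ge f_j(v_1)}:=\max(f_j,f_j(v_1))$ lies in $R(D)^K$ (tropical sum of $f_j$ and a constant) and is constant on $e$. The $K$-invariant upper level set $\Gamma_{v_1}=\{x:f_j(x)\ge f_j(v_1)\}$ can fire on $D+{\rm div}((f_j)_{\ge f_j(v_1)})$: at each boundary point the outgoing slopes of $f_j$ are $\le -1$, so truncation deposits at least one chip per outgoing edge. Hence for small $l>0$ the function $(f_j)_{\ge f_j(v_1)}+{\rm CF}(\Gamma_{v_1},l)$ is in $R(D)^K$ and has slope exactly one on the segment $[t,v_1]\subset e$. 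By your own observation about $M$, this forces $1\in M$, i.e.\ some $f_i$ has slope one on $e$. No appeal to the extremal criterion is needed; the whole argument uses only the tropical semimodule structure and one chip-firing move applied to a truncated generator.
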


\begin{proof}
Suppose that there exists an edge $e=v_1v_2 \in E(G)$ such that any $f_i$ dose not have slope one on $e$.
By assumptions, there exists $f_j$ has slope at least two on $e$.
By changing numbers if we need, we may assume that $f_j(v_1) > f_j(v_2)$.
Let $(f_j)_{ \ge f_i(t)}(x):={\rm max}\{ f_j(x), f_j(t) \}$ for any $t \in \Gamma$.
Since both $f_j$ and the constant $f_j(t)$ function are in $R(D)^K$ and $(f_j)_{ \ge f_i(t)}$ is the tropical sum of them, $(f_j)_{ \ge f_i(t)} \in R(D)^K$ holds.
$\Gamma_{v_1} := \{ x \in \Gamma \,|\, f_j(x) \ge f_j(v_1) \}$ is $K$-invariant and can fire on $D + {\rm div}((f_j)_{ \ge f_i(v_1)})$.
In fact, for any $x \in \Gamma_{v_1}$ and $\sigma \in K$, since $f_j(\sigma(x)) = f_j(x) \ge f_j(v_1)$ holds, $\Gamma_{v_1}$ is $K$-invariant.
For any sufficiently close point $t$ to $v_1$ on $e$ and $\Gamma_t := \{ x \in \Gamma \,|\, f_j(x) \ge f_j(t) \}$, $g := (f_j)_{ \ge f_i(t)}-(f_j)_{ \ge f_i(v_1)}$ has a constant integer slope which is different from zero on any closure of connected component of $\Gamma_t \setminus {\Gamma_{v_1}}$.
Therefore for any point $x$ on the boundary set of $\Gamma_{v_1}$ and any positive number $l$ less than the minimum of lengths of these closures,
\begin{eqnarray*}
(D + {\rm div}((f_j)_{ \ge f_j(v_1)} + ({\rm CF}(\Gamma_{v_1}, l))))(x) &\ge& (D + {\rm div}((f_j)_{ \ge f_j(v_1)} + g))(x)\\
&=& (D + {\rm div}(f_j)_{\ge f_i(t)})(x) \ge 0.
\end{eqnarray*}
Thus $(f_j)_{\ge f_j(v_1)} + {\rm CF}(\Gamma_{v_1}, l)$ is in $R(D)^K$ and has slope one on $[t, v_1] \subset e$.
This is a contradiction.
\end{proof}

\begin{rem}
{\upshape
When $K$ is trivial, the condition ``for any point $x$ on $\Gamma$, there exists $f$ in $R(D)^K$ such that the support of $D + {\rm div}(f)$ contains the orbit of $x$ by $K$'' means that the rank $r(D)$ of $D$ is greater than or equal to one. 
}
\end{rem}

\begin{lemma}
	\label{K-orbit lemma}
If $\phi_{\it F}$ is $K$-injective, then for any $x \in \Gamma$, there exists $f \in R(D)^K$ such that ${\rm supp}(D + {\rm div}(f)) \supset Kx$.
\end{lemma}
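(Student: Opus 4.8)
The plan is to produce, for each $x\in\Gamma$, a single $K$-invariant function putting a chip at $x$. Since $D$ is $K$-invariant and any $K$-invariant $f$ makes $D+{\rm div}(f)$ a $K$-invariant divisor (outgoing slopes are preserved by the isometries in $K$), a positive coefficient at $x$ automatically propagates to every point of the orbit $Kx$. Hence it suffices to find $f\in R(D)^K$ with $(D+{\rm div}(f))(x)>0$. First I would dispose of the easy case $D(x)>0$: a constant function lies in $R(D)^K$, and $D$ itself already has $x$ in its support, so $Kx\subseteq{\rm supp}(D)$.

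So assume $D(x)=0$ (and $\Gamma$ is not a single point, the remaining case being immediate). I would work with the model $(G,l)$ attached to $F$, on whose edge interiors every $f_i$ is affine, since the support of each $D+{\rm div}(f_i)$ sits at vertices of $G$ and no generator can break inside an edge. For a half-edge $h$ at $x$ write $s_i(h)$ for the outgoing slope of $f_i$ along $h$, so that ${\rm ord}_x(f_i)=\sum_h s_i(h)$. The heart of the argument is to locate one generator $f_i$ with a strictly positive outgoing slope at $x$. Granting this, I would set $f:=f_i\oplus f_i(x)=\max\{f_i,\,f_i(x)\}$; this is a tropical sum of two elements of $R(D)^K$ (the generator $f_i$ and the constant function $f_i(x)$), hence lies in $R(D)^K$, and a direct slope computation at $x$ gives ${\rm ord}_x(f)=\sum_{h:\,s_i(h)>0}s_i(h)>0$. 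Therefore $(D+{\rm div}(f))(x)=D(x)+{\rm ord}_x(f)>0$, which is exactly what is needed.

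It remains to find such a generator, and this is the one place where $K$-injectivity is used. Suppose, to the contrary, that no generator had a positive outgoing slope at $x$, i.e. $s_i(h)\le 0$ for every $i$ and every half-edge $h$ at $x$. Because $f_i\in R(D)$ together with $D(x)=0$ forces ${\rm ord}_x(f_i)=\sum_h s_i(h)\ge 0$, a sum of nonpositive numbers that is nonnegative must vanish termwise, so $s_i(h)=0$ for all $i$ and all $h$. By affineness on the edges of $G$, each $f_i$ is then constant on every edge incident to $x$, whence $\phi_{\it F}$ is constant on a whole neighbourhood $U$ of $x$. But $U$ contains a nondegenerate segment, which meets infinitely many (in particular at least two distinct) $K$-orbits since $K$ is finite; choosing $y_1,y_2\in U$ with $Ky_1\ne Ky_2$ yields $\phi_{\it F}(y_1)=\phi_{\it F}(y_2)=\phi_{\it F}(x)$, contradicting $K$-injectivity. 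The main obstacle is making this collapse step fully rigorous: verifying that flatness of all the $f_i$ at $x$ genuinely propagates to constancy on the incident edges (through the model on which the generators are affine), and that the collapsed neighbourhood really separates two distinct $K$-orbits.
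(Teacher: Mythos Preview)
Your argument is correct and follows essentially the same route as the paper. Both proofs hinge on the truncation $f_i\oplus f_i(x)=\max\{f_i,f_i(x)\}\in R(D)^K$ and on showing that, if the conclusion fails at $x$, every generator $f_i$ is locally constant there, so $\phi_{\it F}$ collapses a neighbourhood of $x$ to a point and violates $K$-injectivity. Your presentation is slightly more direct (you first reduce to producing a positive coefficient at the single point $x$ via the $K$-invariance of $D+{\rm div}(f)$, and your derivation ``all $s_i(h)\le 0$ with $\sum_h s_i(h)\ge 0$ forces $s_i(h)\equiv 0$'' is a bit slicker than the paper's version), but the substance is the same. The worry you flag about propagating zero slopes to constancy on a full neighbourhood is harmless: each $f_i$ is piecewise linear with finitely many pieces, so it is affine on every sufficiently short half-edge at $x$, and the finitely many generators admit a common such neighbourhood.
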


\begin{proof}
If $\Gamma$ consists only of one point $p$ , then $D$ must be of the form $kp$ with a positive integer $k \in \boldsymbol{Z}_{>0}$ and $R(D)^K = R(D)$ consists only of constant functions on $\Gamma$.
Therefore for any $f \in R(D)^K$, the support of $D + {\rm div}(f)$ coincides with the support of $D$ and it is $\{ p \}$.

Let us assume that $\Gamma$ does not consist only of one point.
Since $\Gamma$ is connected, $\Gamma$ contains a closed segment.
We show the contraposition.
Suppose that there exists a point $x$ on $\Gamma$ such that for any $f \in R(D)^K$, the support of $D + {\rm div}(f)$ does not contain $Kx$.
In particular, for any $i$, the support of $D + {\rm div}((f_i)_{\ge f_i (x)})$ (resp. the support of $D + {\rm div}(f_i)$) does not contain $Kx$, where $ (f_i)_{\ge f_i (x)}(t) := {\rm max}\{ f_i (x), f_i(t) \}$ and it is in $R(D)^K$ as it is the tropical sum of $f_i$ and the constant $f_i (t)$ function for any $t \in \Gamma$.
Therefore $(D + {\rm div}((f_i)_{\ge f_i(x)}))(x)=0$ (resp. $(D + {\rm div}(f_i))(x) = 0$) holds.
By the definition of $(f_i)_{\ge f_i(x)}, D(x) \ge 0$ and $({\rm div}((f_i)_{\ge f_i(x)}))(x) \ge 0$.
Thus $D(x) = ({\rm div}((f_i)_{\ge f_i(x)}))(x) = 0$, and then $({\rm div}(f_i))(x)=0$.
If $f_i$ is nonconstant around $x$, then there must exist a direction on which $f_i$ has positive slope and another direction on which $f_i$ has negative slope at $x$.
This means that $({\rm div}((f_i)_{\ge f_i(x)}))(x) \ge 1$ and this is a contradiction.
Consequently, $f_i$ is locally constant at $x$.
As ${\it F}$ is finite, we can choose a connected neighborhood $U_x$ of $x$ such that $\phi_{\it F}(U_x) = \phi_{\it F}(x)$.
Since $K$ is finite, $\phi_{\it F}$ is not $K$-injective.
\end{proof}

\begin{rem}
	\label{slope zero remark}
{\upshape
Since $D$ is effective, $R(D)^K$ contains all constant functions on $\Gamma$.
Therefore for any edge $e$ of $G$, there exists $f_i$ which has slope zero on $e$.}
\end{rem}

\begin{rem}
\upshape{
In Section $6$, we define metric graphs with edge-multiplicities and harmonic morphisms between them.
Hereafter, we use these concepts and so we recommend seeing Section $6$.
}
\end{rem}

\begin{thm}
	\label{If $K$-injective, then $K$-Galois}
If $\phi_{\it F}$ is $K$-injective, then $\phi_{\it F}$ induces a $K$-Galois covering on ${\rm Im}(\phi_{\it F})$ with some edge-multiplicities.
\end{thm}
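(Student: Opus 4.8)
The plan is to verify the three defining conditions of a $K$-Galois covering (Definition \ref{branched $K$-Galois1}) for $\phi_{\it F}$, where ${\it F} = \{ f_1, \ldots, f_n \}$ is the given generating set of $R(D)^K$. First I would record that $\phi_{\it F}$ is constant on $K$-orbits: each $f_i$ is $K$-invariant, so $\phi_{\it F}(\sigma x) = \phi_{\it F}(x)$ for every $\sigma \in K$ and $x \in \Gamma$. Combined with the $K$-injectivity hypothesis this gives $\phi_{\it F}^{-1}(\phi_{\it F}(x)) = Kx$ for every $x$: the inclusion $Kx \subseteq \phi_{\it F}^{-1}(\phi_{\it F}(x))$ is $K$-invariance, while if $\phi_{\it F}(z) = \phi_{\it F}(x)$ then $Kz = Kx$ by $K$-injectivity. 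Hence every fibre is a single $K$-orbit, so $K$ acts transitively on each fibre; this establishes the transitivity requirement of Definition \ref{branched $K$-Galois1}, and it also shows that, set-theoretically, ${\rm Im}(\phi_{\it F})$ is the orbit space $\Gamma / K$.

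The heart of the argument is to show that $\phi_{\it F}$ is a local isometry on each edge. By Lemma \ref{K-orbit lemma}, $K$-injectivity guarantees that for every $x$ there is $f \in R(D)^K$ with ${\rm supp}(D + {\rm div}(f)) \supseteq Kx$, which is exactly the hypothesis of Lemma \ref{slope one lemma}; therefore on each edge $e$ of $G$ some $f_i$ has slope one, while by Remark \ref{slope zero remark} some $f_j$ has slope zero on $e$. Parametrising $e$ by arc length and writing $f_i = f_i(0) + s_i t$ with integer slopes $s_i$, the displacement of $\phi_{\it F}$ along $e$ has, by the definition of the lattice-length distance on $\boldsymbol{TP}^{n-1}$, lattice length $t \cdot {\rm gcd}_{i,k}(s_i - s_k)$. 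Since the slopes include both $0$ and $1$, this gcd equals one, so $\phi_{\it F}|_e$ is an isometry onto its image. Thus $\phi_{\it F}$ is a morphism of metric graphs (the image being a metric graph by Proposition \ref{image is a metric graph}) with dilation factor one on every edge, and the image edge $[e]$ has lattice length $l_1(e)$.

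Here the naive degree fails to be constant: over an interior point of $[e]$ the fibre $Kx$ has $|K|/|K_e|$ points, each of local degree one, so the plain degree is $|K|/|K_e|$ rather than $|K|$. This is precisely where the edge-multiplicities of Section $6$ enter. I would equip ${\rm Im}(\phi_{\it F})$ with the edge-multiplicity $m_{[e]} := |K_e|$ (well-defined because the stabilisers of the edges in an orbit $Ke$ are conjugate, hence of equal order) and give every edge of $\Gamma$ multiplicity one. The multiplicity-weighted length of $[e]$ is then $|K_e| \cdot l_1(e)$, which is exactly the length assigned to the corresponding edge of the quotient metric graph $\Gamma / K$ in the construction preceding Lemma \ref{There exists an isomorphism.}.

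Finally I would identify $\phi_{\it F}$ with the natural quotient. Since $\phi_{\it F}$ is $K$-invariant and $K$-injective it factors as $\phi_{\it F} = \iota \circ \pi$, where $\pi : \Gamma \to \Gamma/K$ is the natural surjection and $\iota$ is a bijection; by the previous paragraph $\iota$ matches the multiplicity-weighted metrics, so it is an isomorphism of metric graphs with edge-multiplicities. As $\pi$ is a $K$-Galois covering, a finite harmonic morphism of degree $|K|$ with transitive $K$-action on fibres, as noted after Lemma \ref{There exists an isomorphism.}, composing with the isomorphism $\iota$ shows that $\phi_{\it F}$ is harmonic of degree $|K|$, completing the verification. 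The main obstacle is the reconciliation carried out in the last two steps: the intrinsic lattice-length metric on ${\rm Im}(\phi_{\it F})$ makes $\phi_{\it F}$ a dilation-one local isometry, whereas a genuine degree-$|K|$ covering demands each edge be stretched by $|K_e|$; verifying that the multiplicity $|K_e|$ exactly bridges this gap within the harmonic-morphism formalism of Section $6$, and checking compatibility at vertices where edges of different stabilisers meet, is the delicate part.
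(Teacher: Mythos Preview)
Your approach coincides with the paper's for the main part: you invoke Lemma \ref{K-orbit lemma}, Lemma \ref{slope one lemma}, and Remark \ref{slope zero remark} to show $\phi_{\it F}$ is a local isometry on every edge of $G$, and you introduce exactly the same edge-multiplicities (${\bold 1}$ on $\Gamma$, $m'([e]) = |K_e|$ on the image) to force the degree up to $|K|$.

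Two differences are worth flagging. First, the paper inserts a short model-construction step you skip: a vertex of the canonical model of ${\rm Im}(\phi_{\it F})$ need not come from $\phi_{\it F}(V(G_\circ))$, since the midpoint of an edge $e$ inverted by some $\sigma \in K$ maps to a valence-one vertex in the image. The paper enlarges $V(G)$ by such midpoints before checking harmonicity, so that compatible loopless models $(G_1,l_1)$ and $(G',l')$ exist.

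Second, your final packaging via the factorisation $\phi_{\it F} = \iota \circ \pi$ does not quite fit the formalism of Section 4. In $\Gamma/K$ the edge $[e]$ has length $|K_e|\cdot l_1(e)$, while in ${\rm Im}(\phi_{\it F})$ the corresponding edge has lattice length $l_1(e)$; thus the underlying map $\iota$ has dilation $1/|K_e|$, which is not a nonnegative integer, so $\iota$ is not a morphism of metric graphs and no multiplicity assignment salvages this within the paper's definitions. The paper sidesteps this by verifying harmonicity directly for $\phi_{\it F}$ with multiplicities ${\bold 1}$ and $m'$: on each half-edge the weighted local degree is $|K_e|\cdot 1$, and summing over the fibre $Kx$ gives $|Ke|\cdot|K_e| = |K|$ by orbit--stabiliser. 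You already have this computation in hand; simply replace the factorisation with it.
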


\begin{proof}
If $\Gamma$ is a singleton, then the image of $\phi_{\it F}$ is also a singleton.
Since $\phi_{\it F}$ induces a finite harmonic morphism between singletons, then it is $K$-Galois.

Assume that $\Gamma$ is not a singleton. 
By Proposition \ref{image is a metric graph}, Lemma \ref{slope one lemma}, Lemma \ref{K-orbit lemma} and Remark \ref{slope zero remark}, $\phi_{\it F}$ is a local isometry.
In fact, for any edge $e = v_1 v_2$ of $G$, 
\begin{eqnarray*}
\phi_{\it F}(v_2) &=& (f_1(v_2) : \cdots :f_n(v_2))\\
&=& (f_1(v_1) + s_1 \cdot l(e) : \cdots :f_n(v_1) + s_n \cdot l(e)),
\end{eqnarray*}
where each $s_i$ is the slope of $f_i$ on $e$ from $v_1$ to $v_2$.
Let $j$ be a number such that $f_j$ has slope zero on $e$, {\it {\it i.e.}} $s_j = 0$.
Then, the distance between $\phi_{\it F}(v_1)$ and $\phi_{\it F}(v_2)$ is 
\begin{eqnarray*}
&&\text{``the lattice length of }\\
&&((f_1(v_2) - f_j(v_2)) - (f_1(v_1) - f_j(v_1)), \ldots, (f_n(v_2) - f_j(v_2)) - (f_n(v_1) - f_j(v_1)))\text{''}\\
&=& \text{``the lattice length of }(s_1 \cdot l(e), \ldots, s_n \cdot l(e))\text{''}\\
&=& l(e) \cdot {\rm gcd}(s_1, \ldots, s_n) = l(e).
\end{eqnarray*}

Let $(G_{\circ}, l_{\circ})$ (resp. $(G_{\circ}^{\prime}, l_{\circ}^{\prime})$) be the canonical model of $\Gamma$ (resp. ${\rm Im}(\phi_{\it F}))$.
We show that we can choose loopless models $(G_1, l_1)$ and $(G^{\prime}, l^{\prime})$ of $\Gamma$ and ${\rm Im}(\phi_{\it F})$ respectively for that $\phi_{\it F}$ induces a $K$-Galois covering on ${\rm Im}(\phi_{\it F})$ with the edge-multiplicities ${\bold 1} : E(G_1) \rightarrow \boldsymbol{Z}_{\ge 0}, e \mapsto 1$, and $m^{\prime} : E(G^{\prime}) \rightarrow \boldsymbol{Z}_{\ge 0}, e^{\prime} \mapsto |K_{e}|$, where $e$ is an edge of $G_1$ whose image by $\phi_{\it F}$ is $e^{\prime}$. 
For any $x^{\prime} \in V(G_{\circ}^{\prime}) \setminus \phi_{\it F}(V(G_{\circ}))$, since $\phi_{\it F}$ is $K$-injective, there exists a unique orbit $Kx$ in $\Gamma$ whose image by $\phi_{\it F}$ is $x^{\prime}$.
$x$ is smooth.
Let $e$ be the edge of $G_{\circ}$ containing $x$.
If there exist no elements of $K$ which inverse $e$, then $\phi_{\it F}(x)$ is smooth and this is a contradiction.
Hence there exists an element of $K$ which inverse $e$.
As $x^{\prime}$ is not smooth, by the proof of Lemma \ref{$V_1$ is finite}, $x$ is the midpoint of $e$ and $x^{\prime}$ has valence one.
Thus, let $V(G_1) := V(G) \cup \bigcup_{x^{\prime} \in V(G_{\circ}^{\prime}) \setminus V(G_{\circ})} \phi_{\it F}^{-1}(x^{\prime})$ and $V(G^{\prime}) := \phi_{\it F}(V(G_1))$.
Then $\phi_{\it F}$ induces a finite harmonic morphism from $\Gamma$ to ${\rm Im}(\phi_{\it F})$ of degree $|K|$ with the edge-multiplicities ${\bold 1}$ and $m^{\prime}$.
By the definition of the action of $K$ on $\Gamma$ and by the assumption, the induced finite harmonic morphism is a $K$-Galois covering on ${\rm Im}(\phi_{\it F})$.
\end{proof}

\begin{cor}
If $\phi_{\it F}$ is $K$-very ample, then $\phi_{\it F}$ induces a $K$-Galois covering on ${\rm Im}(\phi)$.
\end{cor}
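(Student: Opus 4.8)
The plan is to derive this corollary directly from Theorem~\ref{If $K$-injective, then $K$-Galois} together with the lemma characterizing $K$-very ampleness, with essentially no new work required. Reading the hypothesis as the assertion that the defining divisor $D$ is $K$-very ample, I would first invoke that lemma, which states that $D$ is $K$-very ample if and only if the rational map associated to \emph{any} finite generating set of $R(D)^K$ is $K$-injective. Applied to the generating set ${\it F}$ at hand, this immediately yields that $\phi_{\it F}$ is $K$-injective.

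Once $K$-injectivity is in hand, the conclusion follows verbatim from Theorem~\ref{If $K$-injective, then $K$-Galois}: a $K$-injective $\phi_{\it F}$ induces a $K$-Galois covering on ${\rm Im}(\phi_{\it F})$ with the edge-multiplicities constructed in that proof. Thus the two steps---translate $K$-very ampleness into $K$-injectivity via the lemma, then apply the theorem---suffice. I expect no substantive obstacle here, since the hard content (the local-isometry computation on each edge, the choice of compatible loopless models of $\Gamma$ and ${\rm Im}(\phi_{\it F})$, and the verification that $K$ acts transitively on every fibre) has already been carried out inside Theorem~\ref{If $K$-injective, then $K$-Galois}. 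The only point deserving a word of care is the independence of the conclusion from the chosen generating set: because the characterizing lemma is phrased for any finite generating set, and Remark~\ref{independent remark} shows that $K$-injectivity is unchanged when passing between minimal generating sets, the statement holds for the particular $\phi_{\it F}$ regardless of which generating set of $R(D)^K$ is used to define it.
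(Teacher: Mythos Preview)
Your proposal is correct and follows exactly the intended approach: the paper gives no explicit proof of this corollary, since it is an immediate consequence of the preceding lemma (translating $K$-very ampleness into $K$-injectivity) and Theorem~\ref{If $K$-injective, then $K$-Galois}. Your observation that the hypothesis should be read as ``$D$ is $K$-very ample'' is the right way to parse the statement.
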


\begin{lemma}
	\label{$K$-Galois is $K$-injective} 
If $\phi_{\it F}$ induces a $K$-Galois covering (with the edge-multiplicities in Theorem \ref{If $K$-injective, then $K$-Galois}), then $\phi_{\it F}$ is $K$-injective.
\end{lemma}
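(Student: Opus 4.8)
The plan is to argue by contraposition directly from Definition \ref{branched $K$-Galois1}, exploiting the one defining property of a $K$-Galois covering that has not yet been used in the ``if'' direction: that $K$ acts transitively on every fibre. The essential observation is that this transitivity says precisely that each fibre $\phi_{\it F}^{-1}(y)$ is a single $K$-orbit, and this is exactly the contrapositive of $K$-injectivity. So the proof will be a short unwinding of definitions rather than a geometric argument; the harmonic-morphism and edge-multiplicity structure inherited from Theorem \ref{If $K$-injective, then $K$-Galois} plays no role beyond guaranteeing that $\phi_{\it F}$ is genuinely a $K$-Galois covering in the sense of Definition \ref{branched $K$-Galois1}.

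Concretely, I would begin with two arbitrary points $x, x' \in \Gamma$ satisfying $\phi_{\it F}(x) = \phi_{\it F}(x') =: y$, so that both lie in the fibre $\phi_{\it F}^{-1}(y)$. Since $\phi_{\it F}$ is assumed to be a $K$-Galois covering (with the edge-multiplicities of Theorem \ref{If $K$-injective, then $K$-Galois}), the induced action of $K$ on this fibre is transitive, and hence there exists $\sigma \in K$ with $\sigma(x) = x'$. Therefore $x' \in Kx$, i.e.\ $Kx = Kx'$. Taking the contrapositive, whenever $Kx \neq Kx'$ we must have $\phi_{\it F}(x) \neq \phi_{\it F}(x')$, which is exactly the statement that $\phi_{\it F}$ separates distinct $K$-orbits, i.e.\ that $\phi_{\it F}$ is $K$-injective.

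The only point that warrants care — and the closest thing to an obstacle — is confirming that ``fibre'' in Definition \ref{branched $K$-Galois1} is read as the full set-theoretic preimage $\phi_{\it F}^{-1}(y)$ of a point $y$ of ${\rm Im}(\phi_{\it F})$, so that the transitivity hypothesis applies to the two chosen points $x, x'$ rather than only to preimages of, say, generic or vertex points. Granting this reading, there is no substantial difficulty: the lemma is simply the converse of Theorem \ref{If $K$-injective, then $K$-Galois}, and together the two results establish the equivalence between $K$-injectivity of $\phi_{\it F}$ and its being a $K$-Galois covering on the image.
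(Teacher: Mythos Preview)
Your argument is correct and is essentially the same as the paper's: both proceed by contraposition, observing that if two distinct $K$-orbits map to the same point then the fibre over that point contains more than one $K$-orbit, contradicting the transitivity clause in the definition of a $K$-Galois covering. The paper's proof is just a terser phrasing of exactly what you wrote.
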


\begin{proof}
If there exist two $K$-orbits in $\Gamma$ whose images by $\phi_{\it F}$ consistent with each other, then the inverse image by $\phi_{\it F}$ contains at least two $K$-orbits.
Thus $K$ does not act transitively on the fibre.
\end{proof}

\begin{cor}
$\phi_{\it F}$ induces a $K$-Galois covering with the edge-multiplicities in Theorem \ref{If $K$-injective, then $K$-Galois} if and only if $\phi_{\it F}$ is $K$-injective.
\end{cor}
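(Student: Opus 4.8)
The plan is to prove the corollary as a direct biconditional assembly of the two results immediately preceding it, since each of the two implications has already been established on its own. First I would dispatch the forward implication: assuming $\phi_{\it F}$ is $K$-injective, Theorem \ref{If $K$-injective, then $K$-Galois} gives directly that $\phi_{\it F}$ induces a $K$-Galois covering on ${\rm Im}(\phi_{\it F})$, and moreover the edge-multiplicities it produces are exactly the pair $\mathbf{1}$ on $E(G_1)$ and $m^{\prime}$ on $E(G^{\prime})$ (with $m^{\prime}(e^{\prime}) = |K_e|$) specified there. So nothing beyond citing that theorem is needed for this direction.

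For the reverse implication, I would assume $\phi_{\it F}$ induces a $K$-Galois covering carrying those same edge-multiplicities and invoke Lemma \ref{$K$-Galois is $K$-injective}. That lemma derives $K$-injectivity from the transitivity of the $K$-action on fibres: if two distinct $K$-orbits in $\Gamma$ had the same image under $\phi_{\it F}$, the fibre over that image point would contain at least two orbits, so $K$ could not act transitively on it, contradicting the definition of a $K$-Galois covering. Hence $\phi_{\it F}$ is $K$-injective, and again the conclusion is immediate from the cited lemma.

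The only point I would take care to flag is that both statements must refer to the \emph{same} edge-multiplicity data, so that the biconditional concerns a single fixed notion of $K$-Galois covering rather than two inequivalent ones. Since Lemma \ref{$K$-Galois is $K$-injective} is phrased precisely with the edge-multiplicities of Theorem \ref{If $K$-injective, then $K$-Galois}, this matching is automatic and there is no genuine obstacle: combining the two directions yields the corollary at once.
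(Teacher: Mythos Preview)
Your proposal is correct and matches the paper's approach exactly: the paper states this corollary immediately after Lemma \ref{$K$-Galois is $K$-injective} with no separate proof, treating it as the obvious combination of that lemma with Theorem \ref{If $K$-injective, then $K$-Galois}. Your observation about the edge-multiplicity data needing to match is apt and is precisely why the paper phrases Lemma \ref{$K$-Galois is $K$-injective} with the parenthetical reference to those specific multiplicities.
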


\begin{rem}
	\label{$K$-Galois is $K$-injective1}
	\upshape{
By the same proof of Lemma \ref{$K$-Galois is $K$-injective}, we have the statement ``Every $K$-Galois covering on a metric graph (with edge-multiplicities) maps distinct $K$-orbits to distinct points.'', which is more general than Lemma \ref{$K$-Galois is $K$-injective}. 
}
\end{rem}

We then have an answer for the question ``when does $\phi_{\it F}$ induce a $K$-Galois covering on ${\rm Im}(\phi_{\it F})$?''.

Next, we pose a question ``whether there exists a divisor which induces a $K$-Galois covering induced by $K$-invariant linear system or not''.

\begin{rem}[{\cite[Corollary $46$]{Haase=Musiker=Yu}}]
{\upshape
Every divisor of positive degree is ample.
}
\end{rem}

\begin{thm}
	\label{$K$-ample1}
Every effective $K$-invariant divisor of positive degree is $K$-ample.
\end{thm}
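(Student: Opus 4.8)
The plan is to reduce the assertion to the ordinary ampleness of divisors of positive degree (the Remark citing \cite[Corollary $46$]{Haase=Musiker=Yu}) by passing to the quotient metric graph $\Gamma^{\prime} := \Gamma / K$ together with its quotient map $\pi : \Gamma \rightarrow \Gamma^{\prime}$, which is a finite harmonic morphism of degree $|K|$. First I would record the divisor identity $\pi^{\ast} \pi_{\ast} D = |K| \cdot D$, valid for any $K$-invariant $D$: for a point $x$ one has $(\pi_{\ast} D)(\pi(x)) = \sum_{y \in Kx} D(y) = (|K|/|K_x|)\,D(x)$ by $K$-invariance, while ${\rm deg}_x(\pi) = |K_x|$, so that $(\pi^{\ast} \pi_{\ast} D)(x) = {\rm deg}_x(\pi) \cdot (\pi_{\ast} D)(\pi(x)) = |K| \cdot D(x)$. (Even without the precise value of ${\rm deg}_x(\pi)$, the bound ${\rm deg}_x(\pi) \le {\rm deg}(\pi) = |K|$ yields $\pi^{\ast}\pi_{\ast} D \le |K|^2 \cdot D$, which is all the argument really needs.) Set $D^{\prime} := \pi_{\ast} D$; since $\pi$ is a finite harmonic morphism, ${\rm deg}(D^{\prime}) = {\rm deg}(D) > 0$.

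Next I would invoke ordinary ampleness on $\Gamma^{\prime}$: as $D^{\prime}$ has positive degree it is ample, so $m D^{\prime}$ is very ample for some positive integer $m$, i.e. the rational map $\psi = (h_1 : \cdots : h_N) : \Gamma^{\prime} \rightarrow \boldsymbol{TP}^{N-1}$ attached to a finite generating set $\{ h_1, \ldots, h_N \}$ of $R(m D^{\prime})$ is injective. I would then pull these functions back along $\pi$. Each $\pi^{\ast} h_j = h_j \circ \pi$ is $K$-invariant because $\pi \circ \sigma = \pi$ for every $\sigma \in K$, and it lies in $R(m|K| D)$: from $m D^{\prime} + {\rm div}(h_j) \ge 0$ and $\pi^{\ast}({\rm div}(h_j)) = {\rm div}(\pi^{\ast} h_j)$ one gets $m \pi^{\ast} D^{\prime} + {\rm div}(\pi^{\ast} h_j) = \pi^{\ast}(m D^{\prime} + {\rm div}(h_j)) \ge 0$, and $m \pi^{\ast} D^{\prime} = m \pi^{\ast} \pi_{\ast} D = m |K| \cdot D$ by the identity above. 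Hence $\pi^{\ast} h_1, \ldots, \pi^{\ast} h_N \in R(m |K| D)^K$.

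Finally I would check that these invariant functions separate distinct $K$-orbits. The map they define is exactly the composite $\psi \circ \pi : \Gamma \rightarrow \boldsymbol{TP}^{N-1}$, and since $\psi$ is injective while $\pi$ identifies precisely the points of each $K$-orbit, one has $(\psi \circ \pi)(x) = (\psi \circ \pi)(x^{\prime})$ if and only if $\pi(x) = \pi(x^{\prime})$, that is, if and only if $Kx = Kx^{\prime}$. Thus for $Kx \neq Kx^{\prime}$ the images differ in $\boldsymbol{TP}^{N-1}$, so some pair $\pi^{\ast} h_j, \pi^{\ast} h_{j^{\prime}}$ satisfies $(\pi^{\ast} h_j)(x) - (\pi^{\ast} h_j)(x^{\prime}) \neq (\pi^{\ast} h_{j^{\prime}})(x) - (\pi^{\ast} h_{j^{\prime}})(x^{\prime})$; as both lie in $R(m|K| D)^K$, this is precisely the definition of $K$-very ampleness of $m |K| D$. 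Therefore $D$ is $K$-ample. The singleton cases, where there is at most one $K$-orbit, are vacuous and I would dispose of them separately.

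The step I expect to be the main obstacle is the passage back to the \emph{given} divisor $D$: the quotient construction by itself only shows that the pulled-back divisor $\pi^{\ast} D^{\prime}$ is $K$-ample, and it is the identity $\pi^{\ast} \pi_{\ast} D = |K| \cdot D$ (resting on ${\rm deg}_x(\pi) = |K_x|$) that pins the pulled-back functions into $R(m|K| D)^K$ rather than into $R$ of some unrelated divisor. Verifying ${\rm deg}_x(\pi) = |K_x|$ requires knowing that the half-edges at $x$ lying over a fixed half-edge at $\pi(x)$ form a single $K_x$-orbit, which follows from the Galois property of $\pi$; if one prefers to avoid this, the coarser bound $\pi^{\ast}\pi_{\ast} D \le |K|^2 \cdot D$ lets the same argument run with $m|K|^2 D$ in place of $m|K| D$.
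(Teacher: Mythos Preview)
Your proposal is correct and follows essentially the same route as the paper: push $D$ down to the quotient $\Gamma' = \Gamma/K$ via $\pi_\ast$, invoke ordinary ampleness of $\pi_\ast D$ there, pull back separating functions through $\pi^\ast$ into $R(k|K|D)^K$ using the identity $\pi^\ast\pi_\ast D = |K|\,D$ (which rests on $\deg_x(\pi)=|K_x|$), and conclude that $k|K|D$ is $K$-very ample. The paper carries out exactly this computation; your write-up is in fact slightly more explicit about the $K$-invariance of the pullbacks and about why $\deg_x(\pi)=|K_x|$ holds.
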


\begin{proof}
Let $\pi : \Gamma \rightarrow \Gamma^{\prime} := \Gamma / K$ be the natural surjection.
By the construction, $\pi$ is $K$-Galois.
Thus $\pi$ is $K$-injective.
Let $x$ and $y$ be points on $\Gamma$ whose $K$-orbits are different from each other and let $x^{\prime}:= \pi(x)$ and $y^{\prime}:=\pi(y)$.
Let $D$ be an effective $K$-invariant divisor on $\Gamma$ of positive degree.
$\pi_{\ast}(D)$ is ample since ${\rm deg}(\pi_{\ast}(D))={\rm deg}(D) \ge 1$.
Therefore there exists a positive integer $k$ such that $k\pi_{\ast}(D)$ is very ample.
Let $f_1^{\prime}$ and $f_2^{\prime}$ be in $R(k\pi_{\ast}(D))$ such that $f_1^{\prime}(x^{\prime})-f_1^{\prime}(y^{\prime}) \not= f_2^{\prime}(x^{\prime})-f_2^{\prime}(y^{\prime})$.
As $D$ is $K$-invariant and $\pi$ is $K$-injective,
\begin{eqnarray*}
\pi^{\ast} \left( \pi_{\ast}(D) \right) &=& \pi^{\ast}\left( \sum_{x \in \Gamma}D(x)\cdot \pi(x) \right)\\
&=& \sum_{x \in \Gamma}{\rm deg}_x(\pi) \cdot \left\{ \left( \sum_{y \in \Gamma}D(y)\cdot \pi(y) \right) (\pi(x)) \right\} \cdot x\\
&=& \sum_{x \in \Gamma}{\rm deg}_x(\pi) \cdot \left( \sum_{y \in \pi^{-1}(\pi(x))}D(y) \right) \cdot x\\
&=& \sum_{x \in \Gamma}{\rm deg}_x(\pi) ( |Kx| \cdot D(x) ) \cdot x = \sum_{x \in \Gamma}(|K_x| \cdot |Kx| \cdot D(x))\cdot  x\\
&=& \sum_{x \in \Gamma} |K| D(x) \cdot x = |K| D.
\end{eqnarray*}
Since $k\pi_{\ast}(D) + {\rm div}(f_i^{\prime})$ is effective,
\[
\pi^{\ast}(k\pi_{\ast}(D) + {\rm div}(f_i^{\prime})) = k\pi^{\ast}(\pi_{\ast}(D))+\pi^{\ast}({\rm div}(f_i^{\prime})) = k|K|D+{\rm div}(\pi^{\ast}f_i^{\prime})
\]
is also effective.
This means $\pi^{\ast}f_i^{\prime} \in R(k|K|D)$.
As
\begin{eqnarray*}
\pi^{\ast}f_1^{\prime}(x)-\pi^{\ast}f_1^{\prime}(y) &=& f_1^{\prime}(\pi(x))-f_1^{\prime}(\pi(y)) = f_1^{\prime}(x^{\prime})-f_1^{\prime}(y^{\prime})\\
&\not=& f_2^{\prime}(x^{\prime})-f_2^{\prime}(y^{\prime}) = f_2^{\prime}(\pi(x))-f_2^{\prime}(\pi(y))\\
&=& \pi^{\ast}f_2^{\prime}(x)-\pi^{\ast}f_2^{\prime}(y),\end{eqnarray*}
$k|K|D$ is $K$-very ample.
\end{proof}

Therefore, the answer is ``always''.

In conclusion, we have the following theorem.

\begin{thm}
	\label{main theorem2}
Let $\Gamma$ be a metric graph and $K$ a finite group acting on $\Gamma$.
Then, there exists a rational map, from $\Gamma$ to a tropical projective space, which induces a $K$-Galois covering on the image with edge-multiplicities.
\end{thm}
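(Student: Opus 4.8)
The plan is to assemble the results already established, so that the theorem reduces to producing one suitable divisor and then invoking the preceding machinery. First I would dispose of the singleton case: if $\Gamma$ is a single point, any finite generating set of $R(D)^K$ gives a rational map whose image is a singleton, and $\phi_{\it F}$ is then a finite harmonic morphism between singletons, hence trivially a $K$-Galois covering. Assuming $\Gamma$ is not a singleton, I would exhibit a $K$-invariant effective divisor of positive degree by taking the orbit sum $D := \sum_{x^{\prime} \in Kx} x^{\prime}$ for an arbitrary point $x$ of $\Gamma$; this is manifestly $K$-invariant, effective, and of degree $|Kx| \geq 1$.

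Next I would upgrade $D$ to a $K$-very ample divisor and pass to a rational map. By Theorem \ref{$K$-ample1}, $D$ is $K$-ample, so there is a positive integer $k$ for which $kD$ is $K$-very ample; note that $kD$ is again $K$-invariant and effective. By Theorem \ref{main theorem'} and Theorem \ref{main theorem''}, the tropical semimodule $R(kD)^K$ is finitely generated, so I may fix a finite generating set ${\it F} = \{ f_1, \ldots, f_n \}$ and form the rational map $\phi_{\it F} : \Gamma \rightarrow \boldsymbol{TP}^{n-1}$, whose image is a metric graph in $\boldsymbol{TP}^{n-1}$ by Proposition \ref{image is a metric graph}.

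Finally I would run the $K$-very ample $\Rightarrow K$-injective $\Rightarrow K$-Galois chain. Since $kD$ is $K$-very ample, the lemma characterizing $K$-very ampleness shows that the rational map associated to \emph{any} finite generating set of $R(kD)^K$, in particular $\phi_{\it F}$, is $K$-injective. Then Theorem \ref{If $K$-injective, then $K$-Galois} applies directly: a $K$-injective $\phi_{\it F}$ induces a $K$-Galois covering on ${\rm Im}(\phi_{\it F})$ with the edge-multiplicities constructed there, which is exactly the asserted rational map into a tropical projective space. Because every substantive step is carried by a previously proved statement, there is no genuine obstacle remaining at this stage; the only points requiring care are the separate treatment of the singleton case and the verification that a $K$-invariant effective divisor of positive degree always exists, both of which I have addressed above so that Theorem \ref{$K$-ample1} and Theorem \ref{If $K$-injective, then $K$-Galois} can be invoked.
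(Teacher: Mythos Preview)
Your proposal is correct and follows essentially the same route as the paper, which presents Theorem \ref{main theorem2} as an immediate conclusion of Theorem \ref{$K$-ample1} and Theorem \ref{If $K$-injective, then $K$-Galois} without writing out an explicit proof. The only detail you supply that the paper leaves implicit is the existence of a $K$-invariant effective divisor of positive degree (your orbit sum $D=\sum_{x'\in Kx}x'$), and your separate treatment of the singleton case is harmless but unnecessary since Theorem \ref{If $K$-injective, then $K$-Galois} already covers it.
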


Especially when the group $K$ is trivial, we have the following corollary.

\begin{cor}
	\label{embedded in a tropical projective space1}
A metric graph is embedded in a tropical projective space by a rational map.
\end{cor}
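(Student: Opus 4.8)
The plan is to specialize Theorem \ref{main theorem2} to the trivial group. Let $\Gamma$ be a metric graph and let $K = \{ e \}$ be the trivial group acting on $\Gamma$ by the identity. Because $K$ is trivial, the $K$-orbit of a point is the singleton containing that point, so two points of $\Gamma$ lie in distinct $K$-orbits precisely when they are distinct. Applying Theorem \ref{main theorem2} to this (trivial) action produces a rational map $\phi : \Gamma \rightarrow \boldsymbol{TP}^{n-1}$ which induces a $K$-Galois covering on ${\rm Im}(\phi)$.

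Next I would unwind Definition \ref{branched $K$-Galois1} in this case. By definition the degree of $\phi$ equals the order of $K$, which is one, and $K$ acts transitively on every fibre, so each fibre consists of a single $K$-orbit. Since $K$ is trivial, every $K$-orbit is a single point, and hence each fibre of $\phi$ is a single point; that is, $\phi$ is injective. Equivalently, this is the statement that $\phi$ separates distinct points of $\Gamma$, which is the triviality-of-$K$ specialization of the ``$K$-injective'' condition appearing in Theorem \ref{If $K$-injective, then $K$-Galois}.

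It remains to promote the injective, degree-one harmonic morphism $\phi$ to an embedding. Here I would invoke the local-isometry property established inside the proof of Theorem \ref{If $K$-injective, then $K$-Galois}: for the finite generating set $F$ of $R(D)^K$ defining $\phi$, each edge $e$ of the chosen model of $\Gamma$ is carried by $\phi$ to a segment whose lattice length equals $l(e)$, so $\phi$ preserves edge lengths and is a local isometry onto its image. Alternatively, Lemma \ref{There exists an isomorphism.} applied with $K$ trivial gives $\Gamma / K = \Gamma$, and the resulting degree-one isomorphism $\psi : \Gamma / K \rightarrow {\rm Im}(\phi)$ identifies $\Gamma$ with ${\rm Im}(\phi)$. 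An injective local isometry of degree one is an isomorphism onto its image, so $\phi$ realizes $\Gamma$ as an embedded metric subgraph of $\boldsymbol{TP}^{n-1}$.

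The argument is a direct specialization and presents essentially no obstacle beyond bookkeeping. The one point that genuinely requires care, which I expect to be the crux, is confirming that injectivity together with degree one and harmonicity yields a true isometric embedding rather than merely a continuous injection; this is exactly the content of the local-isometry computation in the proof of Theorem \ref{If $K$-injective, then $K$-Galois}, which guarantees that $\phi$ has dilation factor one on every edge and hence is an isometry onto its image.
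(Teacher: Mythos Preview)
Your proposal is correct and follows exactly the paper's intended approach: the paper presents this corollary without a separate proof, merely noting ``Especially when the group $K$ is trivial, we have the following corollary,'' so specializing Theorem \ref{main theorem2} to the trivial group is precisely what is meant. Your additional unpacking of why a degree-one $K$-Galois covering with trivial $K$ is an isometric embedding (via the local-isometry computation in the proof of Theorem \ref{If $K$-injective, then $K$-Galois} or via Lemma \ref{There exists an isomorphism.}) simply makes explicit what the paper leaves implicit.
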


\begin{prop}
	\label{pull-back of rational functions2}
If $\phi_{\it F}$ induces a $K$-Galois covering $\phi$, then $\phi^{\ast}({\rm Rat}({\rm Im}(\phi_{\it F})) = {\rm Rat}(\Gamma)^K$ holds.
\end{prop}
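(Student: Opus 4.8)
The plan is to prove the two inclusions separately, using throughout the two structural consequences of $\phi$ being a $K$-Galois covering (Definition \ref{branched $K$-Galois1}): first, that $K$ preserves each fibre, so that $\phi \circ \sigma = \phi$ for every $\sigma \in K$; and second, that $K$ acts transitively on each fibre, so that every fibre $\phi^{-1}(y)$ is a single $K$-orbit. I write $\Gamma^{\prime} := {\rm Im}(\phi_{\it F})$, and recall that $\phi$ is finite since it is $K$-Galois.

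First I would treat the inclusion $\phi^{\ast}({\rm Rat}(\Gamma^{\prime})) \subseteq {\rm Rat}(\Gamma)^K$. For $f^{\prime} \in {\rm Rat}(\Gamma^{\prime})$ the pull-back $\phi^{\ast}f^{\prime} = f^{\prime} \circ \phi$ is $K$-invariant at once, since $(f^{\prime} \circ \phi) \circ \sigma = f^{\prime} \circ (\phi \circ \sigma) = f^{\prime} \circ \phi$ for every $\sigma \in K$. That $\phi^{\ast} f^{\prime}$ again lies in ${\rm Rat}(\Gamma)$ follows because $\phi$ is a finite harmonic morphism, so that the pull-back of a rational function is rational (Subsection $2.4$); concretely, $\phi$ is a local isometry by the proof of Theorem \ref{If $K$-injective, then $K$-Galois}, whence $f^{\prime} \circ \phi$ is piecewise linear with the same integer slopes as $f^{\prime}$ and has the correct values at the points at infinity.

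The substantive inclusion is ${\rm Rat}(\Gamma)^K \subseteq \phi^{\ast}({\rm Rat}(\Gamma^{\prime}))$. Given $f \in {\rm Rat}(\Gamma)^K$, the function $f$ is constant on every $K$-orbit, hence constant on every fibre of $\phi$; therefore it factors uniquely as $f = \bar{f} \circ \phi$ for a function $\bar{f} : \Gamma^{\prime} \rightarrow \boldsymbol{R} \cup \{ \pm \infty \}$, and the only remaining task is to verify $\bar{f} \in {\rm Rat}(\Gamma^{\prime})$, for then $f = \phi^{\ast} \bar{f}$. Here I would again invoke that $\phi$ is a local isometry: for each edge $e^{\prime}$ of $\Gamma^{\prime}$ I choose an edge $e$ of $\Gamma$ with $\phi(e) = e^{\prime}$ on which $\phi$ restricts to an isometry, and transport $f|_{e}$ along $(\phi|_{e})^{-1}$ to obtain $\bar{f}|_{e^{\prime}}$; this is piecewise linear with exactly the integer slopes of $f|_{e}$, and continuity together with the behaviour at points at infinity is inherited from $f$.

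The main obstacle is the well-definedness and integrality in this last step. Independence of the choice of preimage edge $e$ is what makes $\bar{f}$ a genuine function: two such edges $e$ and $\tilde{e}$ lie in one $K$-orbit, say $\tilde{e} = \sigma(e)$, and the identities $f \circ \sigma = f$ and $\phi \circ \sigma = \phi$ give $f|_{\tilde{e}} \circ (\phi|_{\tilde{e}})^{-1} = f|_{e} \circ (\phi|_{e})^{-1}$, so the two transports agree. It is exactly here that the local-isometry property of $\phi_{\it F}$ is indispensable: an arbitrary harmonic morphism would rescale each edge by its dilation factor and could destroy the integrality of the descended slopes, whereas for $\phi_{\it F}$ the ramification $|K_{e}|$ is recorded separately by the edge-multiplicities of $\Gamma^{\prime}$ and the edge lengths, hence the slopes of $\bar{f}$, are left unchanged. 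Once well-definedness is secured, the two inclusions combine to give the asserted equality.
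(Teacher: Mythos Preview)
Your proof is correct and follows essentially the same approach as the paper: prove both inclusions, with the substantive one handled by factoring a $K$-invariant $f$ through $\phi$ to get a function on the image (the paper calls it $g$, defined by $g(x^{\prime}) := f(\phi^{-1}(x^{\prime}))$). Your write-up is in fact more careful than the paper's, which asserts $g$ is well-defined and that $\phi^{\ast}(g)=f$ but does not explicitly verify that $g$ lies in ${\rm Rat}({\rm Im}(\phi_{\it F}))$; you supply that step by invoking the local-isometry property established in the proof of Theorem~\ref{If $K$-injective, then $K$-Galois}.
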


\begin{proof}
For  any $f^{\prime} \in {\rm Rat}({\rm Im}(\phi_{\it F}))$, obviously $\phi^{\ast}(f^{\prime}) = f^{\prime} \circ \phi \in {\rm Rat}(\Gamma)^K$ holds.

Let $(G, l)$ (resp. $(G^{\prime}, l^{\prime}))$ be a model of $\Gamma$ (resp. ${\rm Im}(\phi_{\it F})$) corresponding to $\phi$.
Let $f$ be an element of ${\rm Rat}(\Gamma)^K$.
Since $\varphi$ is $K$-injective, there exists a one-to-one mapping between $K$-orbits of $\Gamma$ and ${\rm Im}(\phi_{\it F})$.
Let $g(x^{\prime}) := f(\phi^{-1}(x^{\prime})), x^{\prime} \in {\rm Im}(\phi_{\it F})$ and $g$ is well-defined.
By the definition of $g$, for any $x \in \Gamma$, $\phi^{\ast}(g) (x) = g \circ \phi (x) = g(\phi(x)) = f(x)$ holds.
Thus, $f = \phi^{\ast}(g) \in \phi^{\ast}({\rm Rat}({\rm Im}(\phi_{\it F})))$.
\end{proof}

\begin{rem}
	\label{pull-back of rational functions}
Let $\Gamma$ be a metric graph, $K$ a finite group acting on $\Gamma$ and $\varphi : \Gamma \rightarrow \Gamma^{\prime} := \Gamma /K$ the natural surjection.
Let $(G_1, l_1)$ (resp. $(G^{\prime}, l^{\prime})$) be the model of $\Gamma$ (resp. $\Gamma^{\prime}$) in Section $3$.
${\rm Rat}(\Gamma)_K$ denotes the set consisting of $K$-invariant rational functions $f$ on $Gamma$ whose each slope on $e$ is a multiple of $|K_e|$, where $e$ is a connected component of $\Gamma \setminus ({\rm supp}({\rm div}(f)) \cup V(G_1))$.
Then, $\varphi^{\ast}({\rm Rat}(\Gamma^{\prime})) = {\rm Rat}(\Gamma)_K$.
\end{rem}

\begin{proof}
Let $f^{\prime} \in {\rm Rat}(\Gamma^{\prime})$.
By the definition of pull-back of a function, $\varphi^{\ast}(f^{\prime}) = f^{\prime} \circ \varphi \in {\rm Rat}(\Gamma)^K$.
Let $e$ be a connected component of $\Gamma \setminus ({\rm supp}({\rm div}(\varphi^{\ast}(f^{\prime}))) \cup V(G_1))$.
By the construction of $\Gamma^{\prime}$, $l^{\prime}(\varphi(e)) = l^{\prime}([e]) = |K_e| l(e)$.
$\varphi^{\ast}(f^{\prime})$ has the slope which is a multiple by $K_e$ of the one of $f^{\prime}$ on $\varphi(e)$.
Therefore, $\varphi^{\ast}(f^{\prime})$ is in ${\rm Rat}(\Gamma)_K$.

Let $f$ be an element of ${\rm Rat}(\Gamma)_K$.
Let $g$ be the rational function on $\Gamma^{\prime}$ defined by the following $(1)$ and $(2)$.
\begin{itemize}
\item[$(1)$] Fix a point $x_0$ on $\Gamma$.
$g(\varphi(x_0)) := f(x_0)$.
\item[$(2)$] For a connected component $e$ of $\Gamma \setminus ({\rm supp}({\rm div}(f)) \cup V(G_1))$, $g$ has the slope $\frac{(\text{the slope of } f \text{ on } e)}{|K_e|}$.
\end{itemize}
Then, $g$ is well-defined and $f = \varphi^{\ast}(g) \in \varphi^{\ast}({\rm Rat}(\Gamma^{\prime})$.
In fact, the following fold.
Let $x_1, x_2$ be any two point on $\Gamma$ and $P_1 = e_{11} \cdots e_{1n_1}$ and $P_2 = e_{21} \cdots e_{2n_2}$ any two paths from $x_1$ to $x_2$.
Let $s_{ij}$ be the slope of $f$ on $e_{ij}$.
As
\[
f(x_2) = f(x_1) + \sum_{j=1}^{n_1} l_1(e_{1j})s_{1j} = f(x_1) + \sum_{j=1}^{n_2} l_1(e_{2j})s_{2j}
\]
holds, then we have
\[
l_1(e_{1j})s_{1j} = \sum_{j=1}^{n_2} l_1(e_{2j})s_{2j}.
\]
Therefore,
\[
\sum_{j=1}^{n_1} \frac{ |K_{e_{1j}}| \cdot l_1(e_{1j}) \cdot s_{1j}}{|K_{e_{1j}}|} = \sum_{j=1}^{n_1} \frac{ |K_{e_{2j}}| \cdot l_1(e_{1j}) \cdot s_{2j}}{|K_{e_{2j}}|}
\]
and then $g$ is well-defined.
Let $x_1$ be $x_0$.
For any $x_2$,
\begin{eqnarray*}
f(x_2) &=& f(x_0) + \sum_{j=1}^{n_1} l_1 (e_{1j}) s_{1j}\\
&=& g(\varphi(x_0)) + \sum_{j=1}^{n_1} \frac{ |K_{e_{ij}}| \cdot l_1 (e_{1j}) \cdot s_{1j}}{|K_{e_{ij}}|} = g(\varphi(x_2)).
\end{eqnarray*}
Then, $f = \varphi^{\ast}(g)$.
\end{proof}

\subsection{Applications}

In \cite{Haase=Musiker=Yu}, Haase, Musiker and Yu give a problem ``give a characterization of metric graphs whose canonical divisors are not very ample'' (see {\cite[Problem $51$]{Haase=Musiker=Yu}}).
In this subsection, we give an answer to this problem and at the same time, we consider an analogy of the fact the canonical map of a hyperelliptic compact Riemann surface is a double covering.

Let $\Gamma$ be a metric graph and $D$ a divisor on $\Gamma$.
$\phi_{|D|}$ denotes the rational map induced by $|D|$, {\it {\it i.e.}} for a minimal generating set $\{ f_1, \ldots, f_n \}$ of $R(D)$, $\phi_{|D|} := (f_1 : \cdots : f_n) : \Gamma \rightarrow \boldsymbol{TP}^{n-1}, x \mapsto (f_1(x) : \cdots : f_n(x))$.

\begin{rem}[{\cite[Proposition $48$]{Haase=Musiker=Yu}}]
	\label{hyperelliptic proposition}
\upshape{
If ${\rm deg}(D)=2$, then $\phi_{|D|}(\Gamma)$ is a tree.
If in addition $r(D)=1$, then the fibre $\phi_{|D|}^{-1}(x)=\{y \in \Gamma \,|\, \phi_{|D|}(y)=x \}$ has size one or two for all $x$ in the image.
}
\end{rem}

By Remark \ref{hyperelliptic proposition}, we have the following lemma.

\begin{lemma}
	\label{hyperelliptic lemma}
Let $\Gamma$ be a hyperelliptic metric graph without one valent points and $D$ a divisor on $\Gamma$ whose degree is two and whose rank is one.
Then, the complete linear system $|D|$ is invariant by the hyperelliptic involution $\iota$ and the rational map associated to $|D|$ induces a $\langle \iota \rangle$-Galois covering on a tree.
\end{lemma}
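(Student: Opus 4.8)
The plan is to identify $\phi_{|D|}$ with the quotient map $\pi\colon\Gamma\to T:=\Gamma/\langle\iota\rangle$ (up to an isometry of the target) and then to conclude by Theorem \ref{If $K$-injective, then $K$-Galois}, where $K=\langle\iota\rangle$. Here $\pi$ is the degree-two harmonic morphism onto the tree $T$. First I would pin down the divisor class: for any $p\in T$ the pull-back $\pi^{\ast}p$ is an $\iota$-invariant effective divisor of degree two, and since $T$ is a tree all such pull-backs are linearly equivalent, giving a degree-two class of rank one (the hyperelliptic pencil). As $\Gamma$ is hyperelliptic of genus at least two without one-valent points, the hyperelliptic involution is unique (\cite{Kawaguchi=Yamaki}), and so the degree-two rank-one class is unique as well; hence $D\sim\pi^{\ast}p$, whence $\iota_{\ast}D\sim D$ and $|D|=\iota(|D|)$. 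Replacing $D$ by $\pi^{\ast}p$, which changes neither $|D|$ nor $\phi_{|D|}$, I may assume $D$ is itself $\iota$-invariant.

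The second step, which is what lets the full system play the role of the invariant one, is to show that \emph{every} effective divisor in $|D|$ is $\iota$-invariant. Given a member $a+b\in|D|$, the relation $a+\iota a=\pi^{\ast}\pi(a)\sim D$ gives $b\sim D-a\sim\iota a$; since the genus is at least one the Abel--Jacobi map is injective on points, so $b=\iota a$ and $a+b=a+\iota a$ is $\iota$-invariant. Consequently $D$ and $D+{\rm div}(f)$ are both $\iota$-invariant for every $f\in R(D)$, and the Liouville-type argument recorded in Subsection 3.1 forces each such $f$ to be $\iota$-invariant. Thus $R(D)=R(D)^{\langle\iota\rangle}$, so a minimal generating set of $R(D)$ is a generating set $F$ of $R(D)^{\langle\iota\rangle}$ and $\phi_{|D|}=\phi_{F}$ is precisely the kind of map to which Theorem \ref{If $K$-injective, then $K$-Galois} applies.

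It then remains to verify that $\phi_{|D|}=\phi_{F}$ is $\langle\iota\rangle$-injective. Since the chosen generators are $\iota$-invariant we have $\phi_{|D|}\circ\iota=\phi_{|D|}$, so each $\iota$-orbit lies in a single fibre; by Remark \ref{hyperelliptic proposition} the image ${\rm Im}(\phi_{|D|})$ is a tree and every fibre has at most two points. Hence a fibre containing a point $x$ with $\iota x\neq x$ equals the orbit $\{x,\iota x\}$, and the only way $\langle\iota\rangle$-injectivity could fail is that two distinct fixed points of $\iota$ share an image. To exclude this I would identify the fibre of $\phi_{|D|}$ over $\phi_{|D|}(x)$ with the support of the unique member of the pencil $|D|$ passing through $x$, namely $x+\iota x$ (equal to $2x$, of support $\{x\}$, when $x$ is fixed): this is the content underlying Remark \ref{hyperelliptic proposition} for a rank-one degree-two system, and combined with the uniqueness $b=\iota a$ from the second step it forces any two points of a common fibre to form one $\iota$-orbit. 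Granting $\langle\iota\rangle$-injectivity, Theorem \ref{If $K$-injective, then $K$-Galois} shows that $\phi_{|D|}$ induces a $\langle\iota\rangle$-Galois covering onto ${\rm Im}(\phi_{|D|})$, which is a tree, as required.

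I expect this last step to be the main obstacle. The reductions in the first two paragraphs are essentially formal once uniqueness of the $g^{1}_{2}$ is in hand, but ruling out a two-point fibre that is not a genuine $\iota$-orbit---equivalently, that $\phi_{|D|}$ does not fold $T$ onto itself at a branch point---is the crux, and it is exactly the separation statement packaged as $\langle\iota\rangle$-injectivity. The cleanest route is to extract from the proof of Remark \ref{hyperelliptic proposition} that the fibres of a degree-two rank-one map are precisely the supports of the pencil members, after which the equality $b=\iota a$ established above closes the argument.
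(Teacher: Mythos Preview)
Your approach is essentially the paper's: both reduce to showing $\langle\iota\rangle$-injectivity of $\phi_{|D|}$ via the fibre description coming from the proof of \cite[Proposition~48]{Haase=Musiker=Yu} (Remark~\ref{hyperelliptic proposition}) and then invoke Theorem~\ref{If $K$-injective, then $K$-Galois}. You are more careful than the paper in justifying $R(D)=R(D)^{\langle\iota\rangle}$ (so that the theorem genuinely applies to $\phi_{|D|}$, which the paper leaves as ``obviously $|D|$ is invariant''), and the step you flag as the crux---identifying fibres with supports of pencil members---is exactly what the paper reads off directly from the bridge/non-bridge dichotomy in that cited proof.
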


\begin{proof}
Obviously $|D|$ is invariant by $\langle \iota \rangle$.
By Remark \ref{hyperelliptic proposition}, ${\rm Im}(\phi_{|D|})$ is a tree.
By the proof of Remark \ref{hyperelliptic proposition}, for any point $x$ on a bridge of $\Gamma$, $|\phi_{|D|}^{-1}(\phi_{|D|}(x))| = 1$ and any point $y$ not on a bridge but on a cycle of $\Gamma$,$|\phi_{|D|}^{-1}(\phi_{|D|}(y))| = 2$ and $\phi_{|D|}^{-1}(\phi_{|D|}(y)) = \{y, \iota(y) \}$.
Therefore $\phi_{|D|}$ is $\langle \iota \rangle$-injecive.
Thus $\phi_{|D|}$ induces a $\langle \iota \rangle$-Galois covering.
\end{proof}

The {\it canonical map} is the rational map induced by the canonical linear system $|K_{\Gamma}|$ on a metric graph $\Gamma$.

\begin{thm}
	\label{canonical map1}
Let $\Gamma$ be a metric graph without one valent points and $\phi_{|K_{\Gamma}|}$ the canonical map of $\Gamma$.
Then $\phi_{|K_{\Gamma}|}$ induces a $\boldsymbol{Z} / 2 \boldsymbol{Z}$-Galois covering on the image of $\phi_{|K_{\Gamma}|}$ if and only if the genus $g$ of $\Gamma$ is two.
\end{thm}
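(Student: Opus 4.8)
The plan is to treat the two implications separately: the ``if'' direction reduces quickly to Lemma \ref{hyperelliptic lemma}, while the ``only if'' direction combines Theorem \ref{HMY2} with the correspondence between $K$-Galois coverings and $K$-injectivity (Theorem \ref{If $K$-injective, then $K$-Galois} and Lemma \ref{$K$-Galois is $K$-injective}), the Riemann--Roch theorem being used throughout to control $\deg K_{\Gamma}$ and $r(K_{\Gamma})$.

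For the ``if'' part, assume $g = 2$. Then $\deg K_{\Gamma} = 2g - 2 = 2$ and the Riemann--Roch theorem gives $r(K_{\Gamma}) = g - 1 = 1$, so $K_{\Gamma}$ is a divisor of degree two and rank one. In particular $\Gamma$ is hyperelliptic, and since it has genus two and no one valent points it carries a unique hyperelliptic involution $\iota$. I would then apply Lemma \ref{hyperelliptic lemma} directly with $D = K_{\Gamma}$: it gives that $|K_{\Gamma}|$ is invariant under $\iota$ and that $\phi_{|K_{\Gamma}|}$ induces an $\langle \iota \rangle$-Galois covering on a tree. Since $\langle \iota \rangle \cong \boldsymbol{Z}/2\boldsymbol{Z}$, this is precisely a $\boldsymbol{Z}/2\boldsymbol{Z}$-Galois covering, finishing this direction.

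For the ``only if'' part, suppose $\phi_{|K_{\Gamma}|}$ is a $\boldsymbol{Z}/2\boldsymbol{Z}$-Galois covering with covering involution $\sigma$. Its degree is two, so a generic fibre consists of two points and $\phi_{|K_{\Gamma}|}$ is not injective; by Theorem \ref{HMY2} this forces $\Gamma$ to be hyperelliptic, so $g \ge 2$. As a point moves along an edge of the image, the fibres $\{x, \sigma x\}$ form a one-parameter family of linearly equivalent effective divisors of degree two, a rank-one pencil which (as $g \ge 2$) must be the hyperelliptic one; its sheet-exchange is therefore $\iota$, so $\sigma = \iota$. Transitivity of $\langle \sigma \rangle$ on fibres then reads $\phi_{|K_{\Gamma}|} \circ \iota = \phi_{|K_{\Gamma}|}$, so the canonical map is $\iota$-invariant. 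Because $K_{\Gamma}$ is preserved by every isometry, $R(K_{\Gamma})$ is an $\iota$-invariant semimodule and $\iota$ permutes its extremals; comparing $\phi_{|K_{\Gamma}|} \circ \iota$ with $\phi_{|K_{\Gamma}|}$ coordinatewise rules out a nontrivial permutation, so each generator $f_i$ satisfies $f_i \circ \iota = f_i + c_i$, and applying $\iota$ twice forces $c_i = 0$. Hence every generator, and therefore every divisor of $|K_{\Gamma}|$, is $\iota$-invariant.

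The remaining task is to rule out $g \ge 3$, and this is the step I expect to be the main obstacle. I would derive a contradiction by exhibiting, when $g \ge 3$, a divisor in $|K_{\Gamma}|$ that is \emph{not} $\iota$-invariant, which is incompatible with the conclusion just reached. The model case is the genus-$g$ banana graph on two vertices $u, v$ joined by $g+1$ parallel edges, where $K_{\Gamma} = (g-1)u + (g-1)v$; here the surplus $2g - 2 \ge 4$ of the canonical degree lets one build a rational function $f$ that descends with different total slopes out of $u$ and out of $v$ along a single edge, so that $K_{\Gamma} + {\rm div}(f)$ is effective but has unequal coefficients at two $\iota$-conjugate points, hence is not $\iota$-invariant. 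The genuinely hard point is to carry out this construction uniformly: for a general hyperelliptic metric graph of genus at least three one must locate, from the structure of the double cover $\Gamma \to \Gamma/\langle \iota \rangle$ supplied by the proof of Theorem \ref{HMY2}, a vertex or edge at which $K_{\Gamma}$ leaves enough slack on a pair of $\iota$-conjugate half-edges to unbalance a chip-firing function in the same way. Failing a uniform argument, the safe fallback is to run this check directly on the finite list of hyperelliptic graphs with non-injective canonical map produced in the proof of Theorem \ref{HMY2}, verifying in each genus-$\ge 3$ case that the canonical map separates some $\iota$-orbit and so cannot be a degree-two covering.
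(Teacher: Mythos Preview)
Your ``if'' direction is correct and matches the paper: for $g=2$ one has $\deg K_\Gamma = 2$, $r(K_\Gamma)=1$, and Lemma~\ref{hyperelliptic lemma} applies directly.

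Your primary ``only if'' strategy, however, has a real gap at the step $\sigma = \iota$. You assert that the fibres $\{x,\sigma x\}$ form a pencil of linearly equivalent degree-two divisors, but nothing in the hypotheses guarantees this: linear equivalence of $\phi^{\ast}(p')$ for varying $p'$ would follow if the image were a tree, which you have not established (and which is in fact what one is trying to get at). Without $\sigma=\iota$, the subsequent argument that every extremal of $R(K_\Gamma)$ is $\iota$-invariant, and hence that $|K_\Gamma|=|K_\Gamma|^{\langle\iota\rangle}$, does not get off the ground. Even granting $\sigma=\iota$, your passage from $\phi\circ\iota=\phi$ to ``each $f_i$ is $\iota$-invariant'' is too quick: the equality in $\boldsymbol{TP}^{n-1}$ only gives $f_i(\iota x)=f_i(x)+c(x)$ with $c$ a priori depending on $x$ (one checks $c\circ\iota=-c$, not $c=0$).

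The paper avoids all of this. For $g\ge 3$ it does not try to identify the covering involution; instead it notes that if $K_\Gamma$ is very ample the canonical map is injective and there is nothing to prove, while if $K_\Gamma$ is not very ample then by \cite[Theorem~49]{Haase=Musiker=Yu} the graph is one of two explicit hyperelliptic types (the banana graph, and a variant with an extra bigon). For each type the paper writes down concrete extremals of $R(K_\Gamma)$ and reads off that $\phi_{|K_\Gamma|}$ is \emph{injective on all of $\Gamma\setminus\{x,y\}$} while $\phi(x)=\phi(y)$. This immediately rules out any $\boldsymbol Z/2\boldsymbol Z$-Galois structure: whatever the involution $\sigma$ is, transitivity on fibres would force $\sigma$ to fix every point of the dense set $\Gamma\setminus\{x,y\}$, hence $\sigma=\mathrm{id}$, contradiction. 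Your fallback is essentially this case analysis, but your phrasing ``separates some $\iota$-orbit'' still leans on the unproved $\sigma=\iota$; the cleaner observation is that generic injectivity alone kills every order-two action compatible with $\phi$.
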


\begin{proof}
Since ${\rm deg}(K_{\Gamma}) = 2g - 2$ and $r(K_{\Gamma}) = g-1$ by Riemann--Roch theorem, when $g = 0$, $\phi_{|K_{\Gamma}|}$ is not induced and when $g = 1$, $\phi_{|K_{\Gamma}|}$ is a constant map.
When $g = 2$, $K_{\Gamma}$ has degree two and rank one and then by Lemma \ref{hyperelliptic lemma}, $\phi_{|K_{\Gamma}|}$ is a $\boldsymbol{Z} / 2 \boldsymbol{Z}$-Galois covering on a tree.
When $g \ge 3$, for $K_{\Gamma}$ is not very ample, $\Gamma$ must be one of the following two type of hyperelliptic metric graphs by \cite[Theorem 49]{Haase=Musiker=Yu}.

(type $1$)
$\Gamma$ is a metric graph consisting two vertices $x, y$ and $g + 1$ multiple edges between them.
See Figure \ref{canonicalmap-1}.

\begin{figure}[htbp]
 \begin{center}
  \includegraphics[width=80mm]{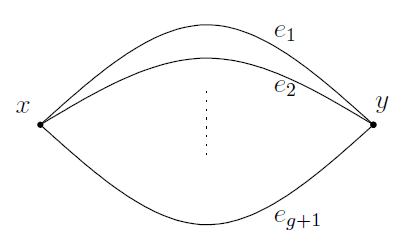}
 \end{center}
 \caption{type $1$}
 \label{canonicalmap-1}
\end{figure}

The rational functions $f_{i1}, f_{i2}$ and $f_{i3}$ in Figure \ref{canonicalmap-3} are extremals of $R(K_{\Gamma})$ and the rational map $e_i^{\circ} \rightarrow \boldsymbol{TP}^{2}, t \mapsto (f_{i1}(t) : f_{i2}(t) : f_{i3}(t))$ is injective.

\begin{figure}[h]
 \begin{center}
  \includegraphics[width=160mm]{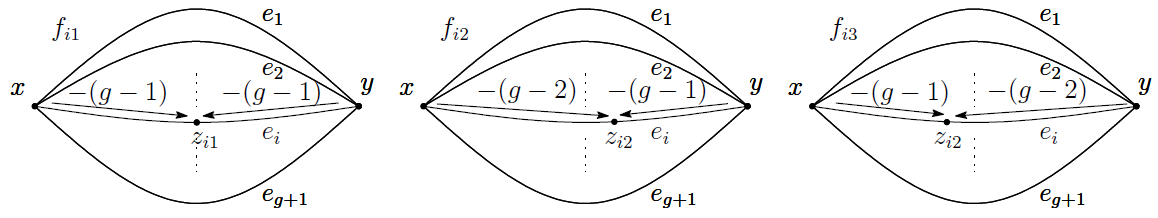}
 \end{center}
 \caption{$z_{i1}$ is the midpoint of $e_i$.
 $z_{i2}$ and $z_{i3}$ are the internally dividing points obtained by internally dividing $e_i$ by $(g-1) : (g-2)$, where $z_{i2}$ is further than $z_{i3}$ from $x$.
 $f_{i1}, f_{i2}$ and $f_{i3}$ define principal divisors such that $D + {\rm div}(f_{i1}) = (2g - 2)z_{i1}$, $D + {\rm div}(f_{i2}) = x + (2g - 3)z_{i2}$ and $D + {\rm div}(f_{i2}) = y + (2g - 3)z_{i3}$, respectively.}
 \label{canonicalmap-3}
\end{figure}

On the other hand, obviously all extremals of $R(K_{\Gamma})$ attain maximal only at $x$ and $y$ by Lemma \ref{condition to be extremal} (in this case, $K$ is trivial).
Hence $\phi_{|K_{\Gamma}|}|_{\Gamma \setminus \{ x, y \}}$ is injective and $\phi_{|K_{\Gamma}|}(x) = \phi_{|K_{\Gamma}|}(y)$.
Thus $\phi_{|K_{\Gamma}|}$ is not a $\boldsymbol{Z} / 2 \boldsymbol{Z}$-Galois covering.

(type $2$)
$\Gamma$ is a metric graph of the form in Figure \ref{canonicalmap-2}.
$e_{g + 2}$ and $e_{g + 3}$ have a same length.
\begin{figure}[htbp]
 \begin{center}
  \includegraphics[width=80mm]{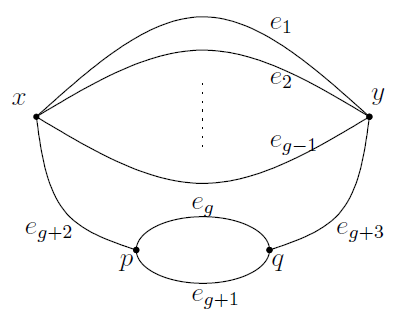}
 \end{center}
 \caption{type $2$}
 \label{canonicalmap-2}
\end{figure}

Since $K_{\Gamma}$ is linearly equivalent to $D := (g-1)(x+y)$, $\phi_{|K_{\Gamma}|} = \phi_{|D|}$ holds.
Similarly to the proof of type $1$, we have three extremals $f_{i1}, f_{i2}$ and $f_{i3}$ of $R(D)$ which induce an injective rational map on $e_i^{\circ}, i=1, \ldots, g$.
The rational functions $h_1, \ldots, h_5$ and $h_6$ in Figure \ref{canonicalmap-4} are extremals of $R(D)$ and the map $(e_{g} \cup e_{g+1}) \setminus \{ p,q \} \rightarrow \boldsymbol{TP}^{5}, t \mapsto (h_1(t) : h_2(t) : h_3(t) : h_4(t) : h_5(t) : h_6(t))$ is injective.
The rational functions $h_7$ and $h_8$ in Figure \ref{canonicalmap-5} are extremals of $R(D)$ and the map $(e_{g + 2} \cup e_{g + 3}) \setminus \{ x,y \} \rightarrow \boldsymbol{TP}^{2}, t \mapsto (f_{11}(t) : h_7(t) : h_8(t))$ is injective.
In particular, when $g = 3$, see Figure \ref{canonicalmap-7}.
Hence $\phi_{|D|}|_{\Gamma \setminus \{ x, y \}}$ is injective.
On the other hand, by the same reason, $\phi_{|D|}(x) = \phi_{|D|}(y)$.
In conclusion, $\phi_{|D|}$ is not a $\boldsymbol{Z} / 2 \boldsymbol{Z}$-Galois covering.

\begin{figure}[htbp]
 \begin{center}
  \includegraphics[width=160mm]{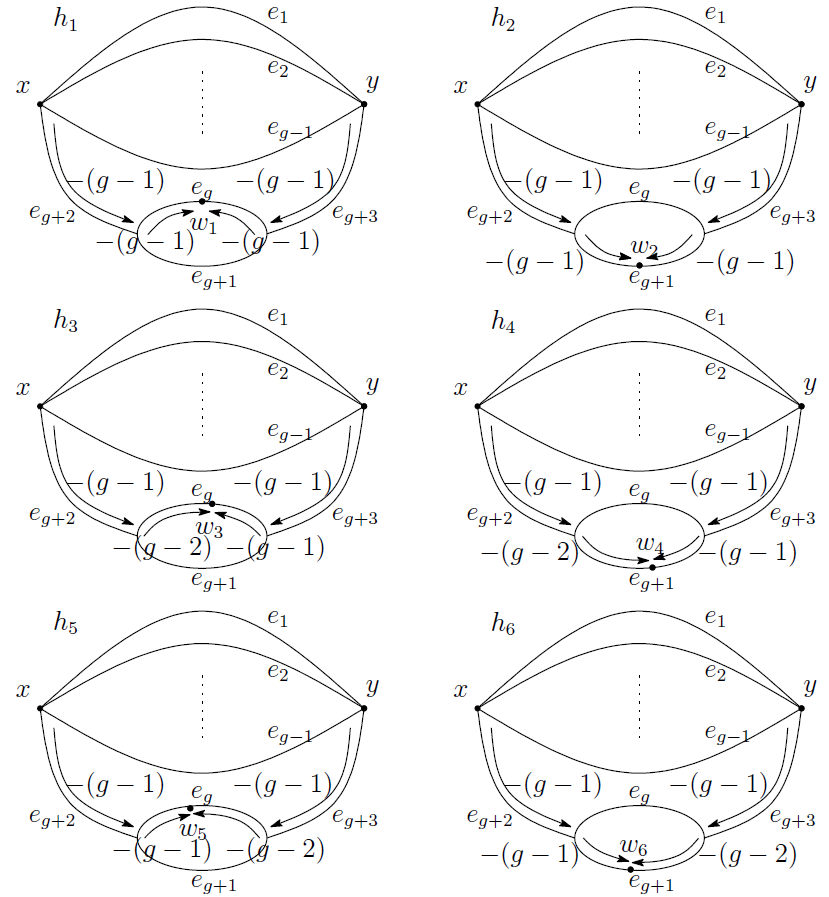}
 \end{center}
 \caption{$w_1$(resp.~$w_2$) is the midpoint of $e_g$(resp.~$e_{g+1}$).
$w_3$ and $w_5$(resp.~$w_4$ and $w_6$) are the internally dividing points obtained by internally dividing $e_g$(resp.~$e_{g+1}$) by $(g-2) : (g-1)$, where $w_3$(resp.~$w_4$) is further than $w_5$(resp.~$w_6$) from $p$(resp.~$q$).
$h_{1}, h_{2}, h_{3}, h_{4}, h_{5}$ and $h_{6}$ define principal divisors such that $D + {\rm div}(h_1) = (2g-2)w_1$, $D + {\rm div}(h_2) = (2g-2)w_2$, $D + {\rm div}(h_3) = p + (2g-3)w_3$, $D + {\rm div}(h_4) = p + (2g-3)w_4$, $D + {\rm div}(h_5) = q + (2g-3)w_5$ and $D + {\rm div}(h_6) = q + (2g-3)w_6$, respectively.}
 \label{canonicalmap-4}
\end{figure}

\begin{figure}[htbp]
 \begin{center}
  \includegraphics[width=160mm]{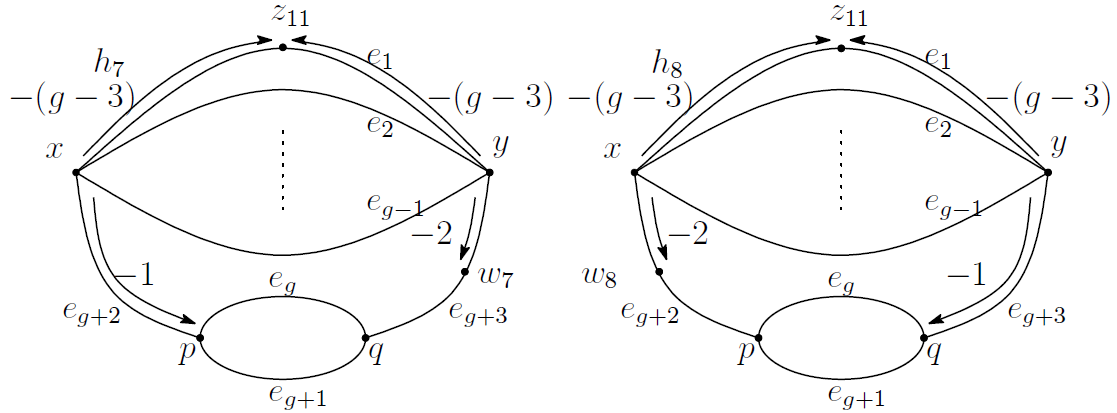}
 \end{center}
 \caption{(case $g \ge 4$) $z_{11}, w_7$ and $w_8$ are the midpoints of $e_1, e_{g + 2}$ and $e_{g + 3}$, respectively.
$h_{7}$ and $h_{8}$ define principal divisors such that $D + {\rm div}(h_7) = (2g-6)z_{11} + p + 2w_7$ and $D + {\rm div}(h_8) = (2g-6)z_{11} + q + 2w_8$, respectively.}
 \label{canonicalmap-5}
\end{figure}

\begin{figure}[htbp]
 \begin{center}
  \includegraphics[width=160mm]{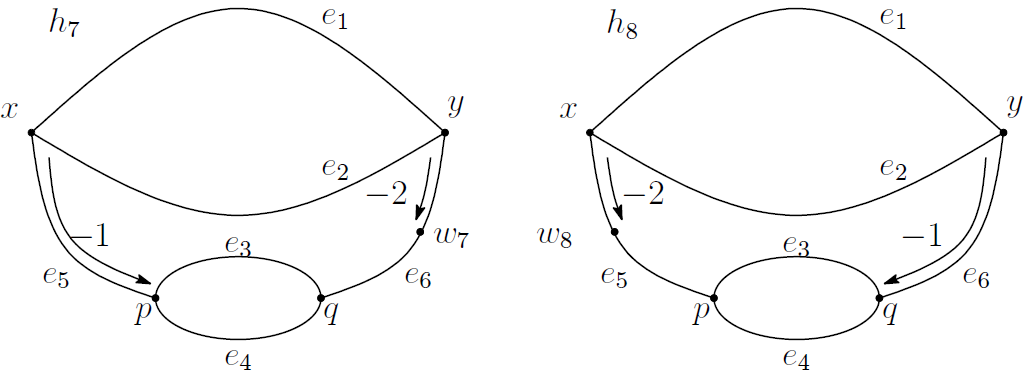}
 \end{center}
 \caption{(case $g = 3$) $w_7$ and $w_8$ are the midpoints of $e_{g + 2}$ and $e_{g + 3}$, respectively.
 $h_{7}$ and $h_{8}$ define principal divisors such that $D + {\rm div}(h_7) = p + 2w_7$ and $D + {\rm div}(h_8) = q + 2w_8$, respectively.}
 \label{canonicalmap-7}
\end{figure}

\end{proof}

\begin{cor}
	\label{canonical map2}
Let $\Gamma$ be a metric graph of genus $\ge 3$ without one valent points.
$K_{\Gamma}$ is not very ample if and only if the canonical map is not harmonic.
In particular, $\Gamma$ is hyperelliptic and $g({\rm Im}(\phi_{|K_{\Gamma}|})=g(\Gamma)+1$.
\end{cor}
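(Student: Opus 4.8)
The plan is to reduce the corollary to the equivalence ``$\phi_{|K_\Gamma|}$ is injective $\iff$ $\phi_{|K_\Gamma|}$ is harmonic'', since, taking the group to be trivial, the lemma characterizing very ampleness by injectivity shows that $K_\Gamma$ is very ample precisely when the canonical map $\phi_{|K_\Gamma|}$ is injective. The direction ``injective $\Rightarrow$ harmonic'' (the contrapositive of ``not harmonic $\Rightarrow$ not very ample'') is immediate from the machinery already built: regarding the trivial group $\{{\rm id}\}$ as acting isometrically on $\Gamma$, injectivity of $\phi_{|K_\Gamma|}$ is exactly $\{{\rm id}\}$-injectivity, so by Theorem \ref{If $K$-injective, then $K$-Galois} the map induces a $\{{\rm id}\}$-Galois covering on its image, and by Definition \ref{branched $K$-Galois1} such a covering is in particular a harmonic morphism. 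Hence $\phi_{|K_\Gamma|}$ is harmonic.

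For the converse ``not very ample $\Rightarrow$ not harmonic'', I would assume $\phi_{|K_\Gamma|}$ is not injective. By Theorem \ref{HMY2} the graph $\Gamma$ is hyperelliptic, and by the explicit classification produced in its proof, together with the hypothesis $g\ge 3$, the graph $\Gamma$ is of type $1$ or type $2$ as in the proof of Theorem \ref{canonical map1}. In both types that proof establishes the crucial structural fact that $\phi_{|K_\Gamma|}$ is injective on $\Gamma\setminus\{x,y\}$ while $\phi_{|K_\Gamma|}(x)=\phi_{|K_\Gamma|}(y)$ for the two distinct vertices $x\neq y$. Writing $P:=\phi_{|K_\Gamma|}(x)$, I would then analyse the local behaviour of $\phi_{|K_\Gamma|}$ at $P$.

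The key point, which I expect to be the main obstacle, is to show that the half-edges at $x$ and the half-edges at $y$ are carried to \emph{disjoint} sets of half-edges at $P$. If a half-edge at $x$ and a half-edge at $y$ had the same image direction at $P$, then, since no edge is contracted (injectivity off $\{x,y\}$ forces every dilation factor to be positive), points arbitrarily close to $x$ and to $y$ along these half-edges would have the same image, contradicting injectivity on $\Gamma\setminus\{x,y\}$. Granting this disjointness, there is a half-edge $h'$ at $P$ lying in the image of a half-edge at $x$, so the count ${\rm deg}_x(\phi_{|K_\Gamma|})$ taken through $h'$ is at least $1$, and there is a half-edge $h''$ at $P$ in the image of a half-edge at $y$ but of no half-edge at $x$, so the count through $h''$ is $0$. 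Since $x$ and $y$ are not one-valent, both kinds of half-edge exist, so ${\rm deg}_x(\phi_{|K_\Gamma|})$ depends on the chosen half-edge at $P$ and $\phi_{|K_\Gamma|}$ is not harmonic at $x$.

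Finally, for the ``in particular'' clause, under these equivalent conditions $\Gamma$ is hyperelliptic directly by Theorem \ref{HMY2}. For the genus, I would note that $\phi_{|K_\Gamma|}$ is a continuous bijection off the single pair $\{x,y\}$ that it glues together; as $\Gamma$ is compact and the image Hausdorff, ${\rm Im}(\phi_{|K_\Gamma|})$ is homeomorphic to the quotient of $\Gamma$ obtained by identifying $x$ with $y$. Identifying two distinct points of a connected graph lowers the Euler characteristic by one and hence raises the first Betti number by one, giving $g({\rm Im}(\phi_{|K_\Gamma|}))=g(\Gamma)+1$. All steps other than the harmonicity failure are bookkeeping on top of the cited results; the disjointness argument at $P$ is the substantive part.
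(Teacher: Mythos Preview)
Your argument is correct and ultimately rests on the same structural input as the paper's: both directions use the machinery already in place (Theorem \ref{If $K$-injective, then $K$-Galois} with trivial $K$ for ``injective $\Rightarrow$ harmonic'', and the type~1/type~2 analysis from the proof of Theorem \ref{canonical map1} for the converse). The paper's own proof of this corollary is a single sentence: it simply asserts that from the explicit extremals exhibited in the proof of Theorem \ref{canonical map1} one can \emph{directly check} that ${\rm deg}_x(\phi_{|K_\Gamma|})$ and ${\rm deg}_y(\phi_{|K_\Gamma|})$ disagree with the degree at every other point, and leaves the genus claim implicit.

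Your route differs in that you do not compute with the explicit extremals at all; instead you extract the non-harmonicity purely from the qualitative facts ``$\phi$ is injective on $\Gamma\setminus\{x,y\}$'' and ``$\phi(x)=\phi(y)$'' established in that proof. The disjointness-of-directions step is the substantive addition, and your justification is sound: no edge is contracted (any interior segment mapping to a point would already violate injectivity off $\{x,y\}$), so positive dilation forces coincident image-directions at $P$ to produce coincident images of nearby interior points. This yields the harmonicity failure at $x$ without ever naming $f_{ij}$ or $h_k$. You also supply the Euler-characteristic argument for $g({\rm Im}\,\phi_{|K_\Gamma|})=g(\Gamma)+1$, which the paper does not spell out. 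In short: same skeleton, but you replace ``directly check'' by a clean topological argument, at the cost of invoking Theorem \ref{If $K$-injective, then $K$-Galois} and the compact--Hausdorff quotient identification explicitly.
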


\begin{proof}
In the proof of Theorem \ref{canonical map1}, we can directly check that the degrees of $\phi_{|K_{\Gamma}|}$ (resp. $\phi_{|D|}$) at $x$ and $y$ are different from the degree of $\phi_{|K_{\Gamma}|}$ (resp. $\phi_{|D|}$) at any other point.
\end{proof}


Theorem \ref{canonical map1} and Corollary \ref{canonical map2} mean that an analogy of the fact the canonical map of a hyperelliptic compact Riemann surface is a double covering on a projective line $\boldsymbol{P}^1(\boldsymbol{C})$ with non-zero degree does not hold for a metric graph and in stead of this, we have the following by Lemma \ref{hyperelliptic lemma}.

\begin{prop}
	\label{double covering1}
Let $\Gamma$ be a hyperelliptic metric graph with genus at least two without one valent points.
Then, an invariant linear subsystem of the hyperelliptic involution $\iota$ of the canonical linear system induces a rational map whose image is a tree and which is a $\langle \iota \rangle$-Galois covering on the image.
\end{prop}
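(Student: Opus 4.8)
The plan is to verify the single hypothesis of Theorem \ref{If $K$-injective, then $K$-Galois} for the map $\phi:=\phi_{|K_\Gamma|^{\langle\iota\rangle}}$ attached to the $\langle\iota\rangle$-invariant subsystem of $|K_\Gamma|$, namely that $\phi$ is $\langle\iota\rangle$-injective; by Lemma \ref{$K$-Galois is $K$-injective} this condition is also necessary, so it is exactly the right target. First I would fix the (unique) hyperelliptic involution $\iota$ and the quotient $\pi\colon\Gamma\to T:=\Gamma/\langle\iota\rangle$, which is a tree and, by the construction of Section $3$, a $\langle\iota\rangle$-Galois covering. Since $\iota$ is an isometry it preserves valences, so $K_\Gamma$ is $\iota$-invariant and $|K_\Gamma|^{\langle\iota\rangle}$ is defined. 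Every function in $R(K_\Gamma)^{\langle\iota\rangle}$ is constant on $\langle\iota\rangle$-orbits, hence $\phi$ factors as $\phi=\bar\phi\circ\pi$ for a continuous $\bar\phi\colon T\to\boldsymbol{TP}^{N}$. Consequently, once $\phi$ is shown to be $\langle\iota\rangle$-injective, $\bar\phi$ is a continuous injection of the compact tree $T$, hence a homeomorphism onto its image; combined with Proposition \ref{image is a metric graph}, which guarantees ${\rm Im}(\phi)$ is a metric graph, this forces ${\rm Im}(\phi)=\bar\phi(T)$ to have genus zero, i.e.\ to be a tree. Thus the whole statement reduces to the single claim that $\phi$ separates distinct $\langle\iota\rangle$-orbits.

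To prove that separation I would pass to the tree. Recall that on a hyperelliptic metric graph $K_\Gamma\sim(g-1)D$ for a divisor $D$ with $\deg D=2$, $r(D)=1$, where $g=g(\Gamma)$; by Lemma \ref{hyperelliptic lemma} and Remark \ref{hyperelliptic proposition} the fibres of $\phi_{|D|}$ are precisely the $\langle\iota\rangle$-orbits, which pins down both the orbit structure and the tree $T$. Now, by Remark \ref{pull-back of rational functions} and Proposition \ref{pull-back of rational functions2}, the $\iota$-invariant rational functions descend to genuine rational functions on $T$, and the effectivity condition $K_\Gamma+{\rm div}(f)\ge0$ descends to an effectivity condition $\bar D+{\rm div}(\bar f)\ge0$ for a divisor $\bar D$ on $T$ obtained from $K_\Gamma$ by ``halving along $\pi$''. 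Its degree is $(\deg K_\Gamma)/2=g-1\ge1$, so $\bar D$ has positive degree. Under this dictionary, separating $\langle\iota\rangle$-orbits by $R(K_\Gamma)^{\langle\iota\rangle}$ is exactly separating points of $T$ by the module $R(\bar D)$ formed on $T$; that is, the $\langle\iota\rangle$-injectivity of $\phi$ is equivalent to the very ampleness of the positive-degree divisor $\bar D$ on the tree $T$.

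The heart of the argument, and the step I expect to be the main obstacle, is therefore twofold. First, the descent dictionary is delicate at the fixed points of $\iota$, i.e.\ the ramification of $\pi$: an $\iota$-invariant function has equal slopes on $\iota$-paired half-edges, so its order at a fixed point is computed with a factor two on swapped directions and a factor one on pointwise-fixed directions, and I must check that this yields an honest effective divisor $\bar D$ of positive degree on $T$ rather than a half-integral condition. Second, I must prove that a positive-degree divisor on a tree is very ample; since $T$ has genus zero this is the tropical analogue of $\mathcal{O}(1)$ being very ample on $\boldsymbol{P}^1$, and I expect to establish it from the chip-firing description of $R(\bar D)$: along the unique path between any two points of $T$ a chip can be routed toward one point and away from the other, so the extremals already take distinct projective values at distinct points. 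This is consistent with the embedding statement of Corollary \ref{embedded in a tropical projective space1} and with the tree images produced in Lemma \ref{hyperelliptic lemma} and Remark \ref{hyperelliptic proposition}. Granting very ampleness, $\bar\phi$ embeds $T$, so $\phi$ is $\langle\iota\rangle$-injective, and Theorem \ref{If $K$-injective, then $K$-Galois} then yields that $\phi$ induces a $\langle\iota\rangle$-Galois covering (with edge-multiplicities) onto the tree ${\rm Im}(\phi)$, which completes the proof.
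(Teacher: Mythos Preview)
Your route is genuinely different from the paper's, and it is worth contrasting the two.

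The paper does \emph{not} work with the full invariant system $|K_\Gamma|^{\langle\iota\rangle}$ at this stage. It writes $K_\Gamma=D+E$ with $D$ effective of degree~$2$ and rank~$1$ and $E$ effective and $\iota$-invariant, and takes the subsystem $\Lambda=\{D_1+E: D_1\in|D|\}\subset|K_\Gamma|$. One checks in two lines that the tropical semimodule attached to $\Lambda$ is literally $R(D)$, so $\phi_\Lambda=\phi_{|D|}$, and Lemma~\ref{hyperelliptic lemma} gives the $\langle\iota\rangle$-Galois covering onto a tree immediately. No descent to the quotient, no very ampleness on trees, no edge-multiplicity bookkeeping is needed. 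The passage from this specific $\Lambda$ to the full $|K_\Gamma|^{\langle\iota\rangle}$ is then handled separately by Lemma~\ref{containment relation lemma}, yielding Theorem~\ref{canonical map3}.

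Your plan instead attacks $|K_\Gamma|^{\langle\iota\rangle}$ directly, which if it worked would prove Theorem~\ref{canonical map3} in one shot and make Lemma~\ref{containment relation lemma} superfluous. The cost is the two steps you yourself flag as obstacles, and they are real. First, your descent dictionary leans on Proposition~\ref{pull-back of rational functions2} and Remark~\ref{pull-back of rational functions} simultaneously, but these refer to different quotients: Proposition~\ref{pull-back of rational functions2} presupposes that $\phi_F$ is already $K$-Galois (circular here), while Remark~\ref{pull-back of rational functions} only identifies $\pi^\ast\operatorname{Rat}(\Gamma')$ with $\operatorname{Rat}(\Gamma)_K$, the functions whose slopes are multiples of $|K_e|$, which is strictly smaller than $\operatorname{Rat}(\Gamma)^K$. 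So the identification $R(K_\Gamma)^{\langle\iota\rangle}\cong R(\bar D)$ for an integral $\bar D$ on $T$ is not yet justified by the results you cite and would need an independent argument tracking the ramification carefully. Second, the very ampleness of every positive-degree divisor on a tree, while true, is not proved anywhere in the paper (Corollary~46 of \cite{Haase=Musiker=Yu} gives only ampleness), so you would have to supply that lemma as well. In short, your strategy is more ambitious and could be made to work, but the paper's proof sidesteps all of this by choosing the subsystem so that the map is $\phi_{|D|}$ on the nose.
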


\begin{proof}
As $\iota$ is an isometry, $K_{\Gamma}$ is of the form $D + E$, where both $D$ and $E$ are effective divisors on $\Gamma$ and ${\rm deg}(D) = 2$, $r(D)=1$.
Since $K_{\Gamma}$ and $D$ are invariant by $\iota$, so is $E$.
Thus the canonical linear system $|K_{\Gamma}| = |D + E|$ contains the invariant linear subsystem $\Lambda$ of the hyperelliptic involution whose elements are of the form $D_1 + E$, where $D_1$ is effective and linearly equivalent to $D$.
Let $R$ be the subsemimodule of $R(K_{\Gamma}) = R(D + E)$ corresponding to $\Lambda$.
Then $R = R(D)$.
In fact, for any $f \in R$, there exists $D_1 + E \in \Lambda$ such that $(D + E) + {\rm div}(f) = D_1 + E \ge 0$ and $D_1$ is  effective.
Thus $D + {\rm div}(f) = D_1 \ge 0$, {\it i.e.} $f \in R(D)$.
Conversely, for any $g \in R(D)$, there exists $D_1 \in \Lambda^{\prime}$ such that $D + {\rm div}(g) = D_1 \ge 0$, where $\Lambda^{\prime}$ is the linear system corresponding to $R(D)$.
Hence $D + E + {\rm div}(g) = D_1 + E \ge 0$ and then $g \in R$.
Therefore, by Lemma \ref{hyperelliptic lemma}, $\Lambda$ induces a rational map which is a $\langle \iota \rangle$-Galois covering on a tree.
\end{proof}

Moreover, we have the following lemma.

\begin{lemma}
	\label{containment relation lemma}
Let $\Gamma$ be a metric graph, $K$ a finite group and $D$ a divisor on $\Gamma$.
For finitely generated $K$-invariant linear subsystems $\Lambda_1 \subset \Lambda_2 \subset |D|$, let $\phi_{\Lambda_1} = (f_1 : \cdots : f_n)$ (resp. $\phi_{\Lambda_2} = (g_1 : \cdots : g_m)$) be the rational map induced by $\Lambda_1$ (resp. $\Lambda_2$).
If $\phi_{\Lambda_1}$ induces a $K$-Galois covering, then $\phi_{\Lambda_1}$ induces a $K$-Galois covering.
\end{lemma}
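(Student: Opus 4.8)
The plan is to reduce the assertion to a statement about $K$-injectivity and then exploit the fact that enlarging a linear subsystem can only refine the separation of points; as stated the conclusion should read that $\phi_{\Lambda_2}$ (not $\phi_{\Lambda_1}$) induces a $K$-Galois covering, and it is this that I prove. By Theorem \ref{If $K$-injective, then $K$-Galois} together with Lemma \ref{$K$-Galois is $K$-injective}, for the rational map attached to a finite generating set of a $K$-invariant linear system, inducing a $K$-Galois covering (with the canonical edge-multiplicities) is equivalent to being $K$-injective. Hence it suffices to show that $K$-injectivity of $\phi_{\Lambda_1}$ forces $K$-injectivity of $\phi_{\Lambda_2}$.

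First I would fix the correspondence between linear subsystems and tropical subsemimodules. Let $R_1$ and $R_2$ denote the tropical subsemimodules of $R(D)$ arising as the full preimages of $\Lambda_1$ and $\Lambda_2$ under the projection $R(D) \to |D|$. Then $\Lambda_1 \subset \Lambda_2$ yields $R_1 \subset R_2$, so in particular the coordinate functions $f_1, \ldots, f_n$ defining $\phi_{\Lambda_1}$ all lie in $R_2$.

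The key step is to verify that $\phi_{\Lambda_2}$ identifies no more points than $\phi_{\Lambda_1}$. Suppose $\phi_{\Lambda_2}(x) = \phi_{\Lambda_2}(x')$. In tropical projective coordinates this means there is a real number $c$ with $g_j(x) = g_j(x') + c$ for every generator $g_j$ of $R_2$. Since each element of $R_2$ is a tropical linear combination $\bigoplus_j (a_j \odot g_j)$ of these generators, and the operations $\oplus = \max$ and $\odot = +$ are unchanged by the common translation by $c$, one obtains $h(x) = h(x') + c$ for all $h \in R_2$. Applying this to $f_i \in R_1 \subset R_2$ gives $\phi_{\Lambda_1}(x) = \phi_{\Lambda_1}(x')$.

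Combining these, if $Kx \ne Kx'$ then $\phi_{\Lambda_1}(x) \ne \phi_{\Lambda_1}(x')$ by $K$-injectivity of $\phi_{\Lambda_1}$, and hence $\phi_{\Lambda_2}(x) \ne \phi_{\Lambda_2}(x')$ by the contrapositive of the previous step; thus $\phi_{\Lambda_2}$ is $K$-injective, and Theorem \ref{If $K$-injective, then $K$-Galois} produces the desired $K$-Galois covering. The structural content, namely monotonicity of separation under enlargement of the subsystem, is immediate; the only place I expect to exercise care is the bookkeeping at points at infinity, where some coordinates take the value $-\infty$. There one must interpret the equality $\phi_{\Lambda_2}(x) = \phi_{\Lambda_2}(x')$ through a fixed representative in $\boldsymbol{TP}^{m-1}$ and check that the common-translation argument remains valid under the $\pm\infty$ conventions of Section 2.
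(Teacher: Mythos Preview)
Your proof is correct and follows the same overall strategy as the paper: both reduce the claim to the monotonicity of $K$-injectivity under enlargement of the linear subsystem, invoking Theorem~\ref{If $K$-injective, then $K$-Galois} and Lemma~\ref{$K$-Galois is $K$-injective} to pass between $K$-Galois and $K$-injective. The difference lies in how the monotonicity step is carried out. The paper argues edge-by-edge: it writes each $f_i$ as a tropical combination $\text{``}\sum_j a_{ij} g_j\text{''}$, refines the model so that on each edge $e$ one has $f_i|_e = a_{ii_e} + g_{i_e}|_e$ for some index $i_e$, and observes that on $e$ the coordinates of $\phi_{\Lambda_1}$ (up to translation) already appear among those of $\phi_{\Lambda_2}$. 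Your argument is global: from $\phi_{\Lambda_2}(x)=\phi_{\Lambda_2}(x')$ you deduce a uniform shift $g_j(x)=g_j(x')+c$, propagate this through $\oplus,\odot$ to all of $R_2$, and specialise to the $f_i$. Your route is shorter and avoids the bookkeeping of choosing a model and edge-dependent indices; the paper's route is more hands-on but requires care when passing from ``$K$-injective on every edge'' to global $K$-injectivity. Both rely on the same extension of Theorem~\ref{If $K$-injective, then $K$-Galois} from $R(D)^K$ to arbitrary finitely generated $K$-invariant subsemimodules, which the paper also uses tacitly.
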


\begin{proof}
Let $f_i = \text{``}\sum_{j=1}^{m} a_{ij}g_j\text{''}$.
Since $m$ is finite and each $g_i$ is a rational function on a metric graph, we can choose a model $(G, l)$ of $\Gamma$ satisfying the following condition: for any $i$ and edge $e$ of $G$, there exists a number $i_e$ such that $f_i |_e = a_{ii_e} + g_{i_e}$.
Let $f^{\prime}_i := f_i - a_{ii_e}$.
Then, ${\it F}^{\prime} := \{ f^{\prime}_1, \ldots, f^{\prime}_n \}$ is a minimal generating set of $R_1$, where $R_1$ is the tropical subsemimodule of $R(D)$ corresponding to $\Lambda_1$.
As $\phi_{\Lambda_1}$ is $K$-Galois, by Theorem \ref{If $K$-injective, then $K$-Galois}, it is $K$-injective and then $\phi_{{\it F}^{\prime}}$ is also $K$-injective by Remark \ref{independent remark}.
By the definition of ${\it F}^{\prime}$, $\phi_{\Lambda_2}|e = \phi_{{\it F}^{\prime}}|e$ holds.
Hence $\phi_{\Lambda_2}$ is $K$-injective on $e$.
Since $e$ is arbitrary, $\phi_{\Lambda_2}$ is $K$-injective.
In conclusion, by Theorem \ref{If $K$-injective, then $K$-Galois} again, $\phi_{\Lambda_2}$ induces a $K$-injective.
\end{proof}

Consequently, by Proposition \ref{double covering1} and Lemma \ref{containment relation lemma}, the following holds.

\begin{thm}
	\label{canonical map3}
For a hyperelliptic metric graph with genus at least two without one valent points, the invariant linear system of the hyperelliptic involution $\iota$ of the canonical linear system induces a rational map whose image is a tree and which is a $\langle \iota \rangle$-Galois covering on the image.
\end{thm}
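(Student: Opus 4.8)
The plan is to deduce the theorem from Proposition~\ref{double covering1} and the monotonicity statement of Lemma~\ref{containment relation lemma}, as announced just before the statement. Throughout write $K = \langle\iota\rangle$ and let $|K_\Gamma|^{\langle\iota\rangle}$ denote the invariant linear system of $\iota$ inside the canonical linear system, that is, the set of $\iota$-fixed effective divisors linearly equivalent to $K_\Gamma$; this is the system whose rational map $\phi_{|K_\Gamma|^{\langle\iota\rangle}}$ must be shown to induce an $\langle\iota\rangle$-Galois covering onto a tree.

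First I would recall the concrete subsystem supplied by Proposition~\ref{double covering1}: decompose $K_\Gamma = D + E$ with $\deg(D) = 2$, $r(D) = 1$ and $E$ effective and $\iota$-invariant, and set $\Lambda = \{\, D_1 + E \mid D_1 \in |D| \,\}$; by Lemma~\ref{hyperelliptic lemma} the map $\phi_\Lambda$, which coincides with $\phi_{|D|}$, induces an $\langle\iota\rangle$-Galois covering onto a tree. The step I expect to be the main obstacle is the inclusion $\Lambda \subseteq |K_\Gamma|^{\langle\iota\rangle}$. Since $E$ is $\iota$-fixed, this amounts to showing that every member $D_1$ of the degree-two rank-one system $|D|$ is $\iota$-invariant. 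I would argue this as follows: writing $D_1 = y_1 + y_2$, the fibre divisor $\phi_{|D|}^{\ast}(\phi_{|D|}(y_1)) = y_1 + \iota(y_1)$ of the degree-two covering also lies in $|D|$, so $y_2 - \iota(y_1)$ is principal; because $g(\Gamma) \ge 2$ forces distinct points of $\Gamma$ to be linearly inequivalent, this gives $y_2 = \iota(y_1)$, whence $D_1 = y_1 + \iota(y_1)$ is $\iota$-invariant and $D_1 + E \in |K_\Gamma|^{\langle\iota\rangle}$.

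With the chain $\Lambda \subseteq |K_\Gamma|^{\langle\iota\rangle} \subseteq |K_\Gamma|$ established, both subsystems are finitely generated: $\Lambda$ corresponds to $R(D)$, finitely generated by Theorem~\ref{HMY}, while $|K_\Gamma|^{\langle\iota\rangle}$ is finitely generated by Theorem~\ref{$R(D)^K$ is finitely generated1.} applied to $K = \langle\iota\rangle$ and the $\iota$-invariant divisor $K_\Gamma$. I would then apply Lemma~\ref{containment relation lemma} with $\Lambda_1 = \Lambda$ and $\Lambda_2 = |K_\Gamma|^{\langle\iota\rangle}$: since $\phi_{\Lambda_1}$ induces an $\langle\iota\rangle$-Galois covering, the lemma yields that $\phi_{\Lambda_2} = \phi_{|K_\Gamma|^{\langle\iota\rangle}}$ is $\langle\iota\rangle$-injective and hence, by Theorem~\ref{If $K$-injective, then $K$-Galois}, induces an $\langle\iota\rangle$-Galois covering as well.

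Finally, to see that the image is a tree, I would observe that $\phi_{|K_\Gamma|^{\langle\iota\rangle}}$ is an $\langle\iota\rangle$-Galois covering of degree $|\langle\iota\rangle| = 2$, so by Lemma~\ref{There exists an isomorphism.} its image is isomorphic to the quotient metric graph $\Gamma/\langle\iota\rangle$; as $\iota$ is a hyperelliptic involution, this quotient is by definition a tree, which completes the plan.
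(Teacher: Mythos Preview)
Your proposal is correct and follows essentially the same route as the paper: the paper's proof is the single sentence ``by Proposition~\ref{double covering1} and Lemma~\ref{containment relation lemma}'', and you apply exactly these two results with $\Lambda_1=\Lambda$ and $\Lambda_2=|K_\Gamma|^{\langle\iota\rangle}$. Your write-up supplies details the paper omits---namely the verification that every $D_1\in|D|$ is $\iota$-invariant (so that $\Lambda\subseteq|K_\Gamma|^{\langle\iota\rangle}$ genuinely holds), the finite-generation hypotheses, and the separate argument via Lemma~\ref{There exists an isomorphism.} that the image is a tree---none of which the paper spells out.
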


\section{Metric graphs with edge-multiplicities}

In Section $3$, we prove that the induced rational map by $|D|^K$ which is $K$-injective is a finite harmonic morphism (and then a $K$-Galois covering) of metric graphs with an edge-multiplicity.
We define in this section metric graphs with edge-multiplicities and harmonic morphisms between them.
Compare Subsections $2.2, 2.3$ and $2.4$.
Note that all of them are original definitions of the author and we may need more improvements.

\subsection{Metric graphs with edge-multiplicities}

\begin{dfn}
{\upshape
Let $\Gamma$ be a metric graph, and $(G, l)$ a model of $\Gamma$.
We call a function $m : E(G) \rightarrow \Z_{>0}$ an {\it edge-multiplicity function} on $G$.
${\bold 1}$ is the edge-multiplicity function assigning multiplicity one to all edges and called a {\it trivial} edge-multiplicity function.
Two triplets $(G, l, m)$ and $(G^{\prime}, l^{\prime}, m^{\prime})$ are said to be {\it isomorphic} if there exists an isomorphism between $G$ and $G^{\prime}$ keeping the length and the multiplicity of each edge.
We define ${\rm Isom}_{(G, l)}(\Gamma)$ as the subset of the isometry transformation group ${\rm Isom}(\Gamma)$ of $\Gamma$ whose element keeps the length of each edge of $G$.
We set ${\rm Isom}_{(G, l, m)}$ as the subset of ${\rm Isom}_{(G, l)}(\Gamma)$ whose each element keeps the multiplicity of each edge of $G$.
}
\end{dfn}

\begin{dfn}[Subdivision of models]
{\upshape
Let $\Gamma$ be a metric graph, and $(G, l)$, $(G^{\prime}, l^{\prime})$ models of $\Gamma$.
$(G, l)$ is said to be a {\it subdivision} of $(G^{\prime}, l^{\prime})$ and written as $(G, l) \succ (G^{\prime}, l^{\prime})$ if $V(G^{\prime})$ is a subset of $V(G)$.
}
\end{dfn}

\begin{dfn}
{\upshape
Let $\Gamma$ be a metric graph, and $(G, l) \succ (G^{\prime}, l^{\prime})$ models of $\Gamma$.
A triplet $(G, l, m)$ is said to be a {\it subdivision} of a triplet $(G^{\prime}, l^{\prime}, m^{\prime})$ and written as $(G, l, m) \succ (G^{\prime}, l^{\prime}, m^{\prime})$ if for any $e^{\prime} \in E(G^{\prime})$ and $e_i \in E(G)$ such that $e^{\prime} = e_1 \sqcup \cdots \sqcup e_n$, $m^{\prime}(e^{\prime})$ divides all $m(e_i)$.
In particular, if $m^{\prime}(e^{\prime})$ and all $m(e_i)$ equals, then $(G^{\prime}, l^{\prime}, m^{\prime})$ is said to be a {\it trivial} subdivision of $(G, l, m)$ and then $(G^{\prime}, l^{\prime}, m^{\prime})$ is denoted by $(G^{\prime}, l^{\prime}, m)$.
}


\end{dfn}

\begin{dfn}
\upshape{
For a quadruplet $(\Gamma, G, l, m)$, the {\it metric graph with an edge-multiplicity}, denoted by $\Gamma_{m}$, is defined by the pair of metric graph $\Gamma$ and $m$ such that we can choose only models $(G^{\prime}, l^{\prime}) \prec (G, l)$ of $\Gamma$.
The word ``a {\it point} $x$ on $\Gamma_{m}$'' means that $x \in \Gamma$.
The {\it genus} of $\Gamma_{m}$ is the genus of $\Gamma$.
}
\end{dfn}


\begin{dfn}
\upshape{
Let $\Gamma_{m}$ be a metric graph with an edge-multiplicity.
${\rm Div}(\Gamma_{m})$ is defined by ${\rm Div}(\Gamma)$ and an element of ${\rm Div}(\Gamma_{m})$ is called a {\it divisor} on $\Gamma_{m}$.
The {\it canonical divisor} on $\Gamma_{m}$ is the canonical divisor on $\Gamma$.
We define ${\rm Rat}(\Gamma_{m})$ as ${\rm Rat}(\Gamma)$.
We call an element of ${\rm Rat}(\Gamma_{m})$ a {\it rational function} on $\Gamma_{m}$.
}
\end{dfn}



Note that for an edge $e$ of $G$ and $f \in {\rm Rat}(\Gamma_{m})$, $f$ has different finite slopes on $e$ since $f$ may have plural pieces.

For a metric graph with an edge-multiplicity, we use same terms and notations for the underlying metric graph.

\subsection{Harmonic morphisms with edge-multiplicities}

\begin{dfn}
\upshape{
Let $\Gamma_m, \Gamma^{\prime}_{m^{\prime}}$ be metric graphs with edge-multiplicities $m, m^{\prime}$, respectively, and $\varphi_{m,\, m^{\prime}} : \Gamma_m \rightarrow \Gamma^{\prime}_{m^{\prime}}$ be a continuous map.
The map $\varphi_{m,\, m^{\prime}}$ is called a {\it morphism} if $\varphi_{m,\, m^{\prime}}$ is a morphism as loopless models $(G, l)$ and $(G^{\prime}, l^{\prime})$.
For an edge $e$ of $G$, if $\varphi_{m,\, m^{\prime}}(e)$ is a vertex of $G^{\prime}$, let $m^{\prime}(\varphi_{m,\, m^{\prime}}(e)) := 0$ formally.
The morphism $\varphi_{m,\, m^{\prime}}$ is said to be {\it finite} if $\varphi_{m,\, m^{\prime}}$ is finite as a morphism of loopless models.
}
\end{dfn}

\begin{dfn}
\upshape{
Let $\varphi_{m,\, m^{\prime}} : \Gamma_m \rightarrow \Gamma^{\prime}_{m^{\prime}}$ be a morphism of metric graphs with edge-multiplicities.

Let $\Gamma^{\prime}_{m^{\prime}}$ be not a singleton and $x$ a point on $\Gamma_m$.
The morphism $\varphi_{m,\, m^{\prime}}$ is {\it harmonic at} $x$ if for any edge $e_1$ of $G$ adjacent to $x$, $m(e_1)$ devides $m^{\prime}(\varphi_{m,\, m^{\prime}}(e_1))$ and the number
\[
{\rm deg}^{m,\, m^{\prime}}_x(\varphi_{m,\, m^{\prime}}) := \sum_{x \in h \mapsto h^{\prime}} \frac{m^{\prime}(\varphi_{m,\, m^{\prime}}(e))}{m(e)} \cdot  {\rm deg}_h(\varphi_{m,\, m^{\prime}})
\]
is independent of the choice of half-edge $h^{\prime}$ emanating from $\varphi_{m,\, m^{\prime}}(x)$, where $h$ is a connected component of the inverse image of $h^{\prime}$ by $\varphi_{m,\, m^{\prime}}$ containing $x$ and $e$ is the edge of $G$ containing $h$.
The morphism $\varphi_{m,\, m^{\prime}}$ is {\it harmonic} if it is harmonic at all points on $\Gamma_m$.
For a point $x^{\prime}$ on $\Gamma^{\prime}_{m^{\prime}}$,
\[
{\rm deg}^{m,\, m^{\prime}}(\varphi_{m,\, m^{\prime}}) := \sum_{x \mapsto x^{\prime}}{\rm deg}^{m,\, m^{\prime}}_x(\varphi_{m,\, m^{\prime}})
\]
is said the {\it degree} of $\varphi_{m,\, m^{\prime}}$, where $x$ is an element of the inverse image of $x^{\prime}$ by $\varphi_{m,\, m^{\prime}}$.
If $\Gamma^{\prime}_{m^{\prime}}$ is a singleton and $\Gamma_m$ is not a singleton, for any point $x$ on $\Gamma_m$, 
we define ${\rm deg}^{m,\, m^{\prime}}_x(\varphi_{m,\, m^{\prime}})$ as zero so that we regard $\varphi_{m,\, m^{\prime}}$ as a harmonic morphism of degree zero.
If both $\Gamma_m$ and $\Gamma^{\prime}_{m^{\prime}}$ are singletons, we regard $\varphi_{m,\, m^{\prime}}$ as a harmonic morphism which can have any number of degree.
}
\end{dfn}

\begin{lemma}
$\sum_{x \mapsto x^{\prime}}{\rm deg}^{m,\, m^{\prime}}_x(\varphi_{m,\, m^{\prime}})$is independent of the choice of a point $x^{\prime}$ on $\Gamma^{\prime}$.
\end{lemma}


\begin{proof}
It is sufficient to check that for any vertex of $G^{\prime}$, the sum is same.
Let $x_1^{\prime}$ and $x_2^{\prime}$ be vertices of $G^{\prime}$ both adjacent to an edge $e^{\prime}$ of $G^{\prime}$.
Let $h_1^{\prime}$ be the half-edge of $x_1^{\prime}$ contained in $e^{\prime}$.
Then
\begin{eqnarray*}
\sum_{x_1 \mapsto x_1^{\prime}}{\rm deg}^{m,\, m^{\prime}}_{x_1}(\varphi_{m,\, m^{\prime}}) &=& \sum_{x_1 \mapsto x_1^{\prime}} \left( \sum_{x_1 \in h_1 \mapsto h_1^{\prime}}{\rm deg}^{m,\, m^{\prime}}_{h_1}(\varphi_{m,\, m^{\prime}}) \right)\\
&=& \sum_{x_1 \mapsto x_1^{\prime}} \left( \sum_{x_1 \in e_1 \mapsto e^{\prime}}{\rm deg}^{m,\, m^{\prime}}_{e_1}(\varphi_{m,\, m^{\prime}}) \right)\\
&=& \sum_{e \mapsto e^{\prime}}{\rm deg}^{m,\, m^{\prime}}_e(\varphi_{m,\, m^{\prime}}).
\end{eqnarray*}
Similarly,
\[
\sum_{x_2 \mapsto x_2^{\prime}}{\rm deg}^{m,\, m^{\prime}}_{x_2}(\varphi_{m,\, m^{\prime}}) = \sum_{e \mapsto e^{\prime}}{\rm deg}^{m,\, m^{\prime}}_e(\varphi_{m,\, m^{\prime}}).\qedhere
\]
\end{proof}


The collection of metric graphs with edge-multiplicities together with harmonic morphisms between them forms a category.

\begin{dfn}
\upshape{
Let $\varphi_{m,\, m^{\prime}} : \Gamma_m \rightarrow \Gamma^{\prime}_{m^{\prime}}$ be a finite harmonic morphism of metric graphs with edge-multiplicities.
For $f$ in ${\rm Rat}(\Gamma_m)$, the {\it push-forward} of $f$ is the function $(\varphi_{m,\, m^{\prime}})_\ast f: \Gamma^{\prime}_{m^{\prime}} \rightarrow \boldsymbol{R} \cup \{ \pm \infty \}$ defined by 
\[
(\varphi_{m,\, m^{\prime}})_\ast f(x^{\prime}) := \sum_{\substack{x \in \Gamma_{m} \\ \varphi_{m,\, m^{\prime}}(x) = x^{\prime}}} {\rm deg}^{m,\, m^{\prime}}_x(\varphi_{m,\, m^{\prime}}) \cdot f(x).
\]
The {\it pull-back} of $f^{\prime}$ in ${\rm Rat}(\Gamma^{\prime}_{m^{\prime}})$ is the function $(\varphi_{m,\, m^{\prime}})^{\ast}f^{\prime} : \Gamma_m \rightarrow \boldsymbol{R} ~\cup \{ \pm \infty \}$ defined by $(\varphi_{m,\, m^{\prime}})^{\ast}f^{\prime} := f^{\prime} \circ \varphi_{m,\, m^{\prime}}$.
We define the {\it push-forward homomorphism} on divisors $(\varphi_{m,\, m^{\prime}})_\ast : {\rm Div}(\Gamma_m) \rightarrow {\rm Div}(\Gamma^{\prime}_{m^{\prime}})$ by 
 homomorphism
\[
(\varphi_{m,\, m^{\prime}})_\ast (D) := \sum_{x \in \Gamma_m}D(x) \cdot \varphi_{m,\, m^{\prime}}(x).
\]
The {\it pull-back homomorphism} on divisors $(\varphi_{m,\, m^{\prime}})^{\ast}:{\rm Div}(\Gamma^{\prime}_{m^{\prime}}) \rightarrow {\rm Div}(\Gamma_m)$ is defined to be
\[
(\varphi_{m,\, m^{\prime}})^{\ast} (D^{\prime}) := \sum_{x \in \Gamma_m}{\rm deg}^{m,\, m^{\prime}}_x(\varphi_{m,\, m^{\prime}}) \cdot D^{\prime}(\varphi_{m,\, m^{\prime}}(x)) \cdot x.
\]
}
\end{dfn}

\begin{rem}
We need not to assume that $\varphi_{m,\, m^{\prime}}$ is finite to define pull-backs of rational functions and divisors.
\end{rem}

\begin{prop}
For any divisors $D$ on $\Gamma_m$ and $D^{\prime}$ on $\Gamma^{\prime}_{m^{\prime}}$, ${\rm deg}((\varphi_{m,\, m^{\prime}})_{\ast}(D)) = {\rm deg}(D)$ and ${\rm deg}((\varphi_{m,\, m^{\prime}})^{\ast}(D^{\prime})) = {\rm deg}^{m,\, m^{\prime}}(\varphi_{m,\, m^{\prime}})$ hold.
\end{prop}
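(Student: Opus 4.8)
The plan is to establish the two equalities independently, each by a direct computation from the definitions of the push-forward and pull-back homomorphisms on divisors, reorganizing the relevant sums according to the fibres of $\varphi_{m,\, m^{\prime}}$. Throughout I assume, as in the ambient definition, that $\varphi_{m,\, m^{\prime}}$ is a finite harmonic morphism, so that the local degrees ${\rm deg}^{m,\, m^{\prime}}_x(\varphi_{m,\, m^{\prime}})$ are defined and the fibre-sum identity of the preceding lemma is available.

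For the first equality I would start from the definition $(\varphi_{m,\, m^{\prime}})_{\ast}(D) = \sum_{x \in \Gamma_m} D(x) \cdot \varphi_{m,\, m^{\prime}}(x)$ and observe that passing to the degree merely sums all the coefficients $D(x)$, independently of where the points $x$ are sent. Since $D$ has finite support, this sum is exactly ${\rm deg}(D)$. No harmonicity is needed here: the push-forward only relocates the chips of $D$ and never rescales their multiplicities, so degree is preserved for purely combinatorial reasons.

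For the second equality the key input is the fibre-sum identity established just above, namely that $\sum_{x \mapsto x^{\prime}} {\rm deg}^{m,\, m^{\prime}}_x(\varphi_{m,\, m^{\prime}})$ is independent of $x^{\prime}$ and hence equals ${\rm deg}^{m,\, m^{\prime}}(\varphi_{m,\, m^{\prime}})$. I would group the defining sum
\[
(\varphi_{m,\, m^{\prime}})^{\ast}(D^{\prime}) = \sum_{x \in \Gamma_m} {\rm deg}^{m,\, m^{\prime}}_x(\varphi_{m,\, m^{\prime}}) \cdot D^{\prime}(\varphi_{m,\, m^{\prime}}(x)) \cdot x
\]
by the image point $x^{\prime} = \varphi_{m,\, m^{\prime}}(x)$, factor out the common value $D^{\prime}(x^{\prime})$ on each fibre, and apply the lemma to replace the inner sum by ${\rm deg}^{m,\, m^{\prime}}(\varphi_{m,\, m^{\prime}})$. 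Summing over the finite support of $D^{\prime}$ then yields ${\rm deg}^{m,\, m^{\prime}}(\varphi_{m,\, m^{\prime}}) \cdot {\rm deg}(D^{\prime})$; thus the right-hand side should carry the factor ${\rm deg}(D^{\prime})$, reducing to the bare degree of $\varphi_{m,\, m^{\prime}}$ exactly when $D^{\prime}$ is a single point.

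The computation is otherwise routine, so I expect no substantial obstacle; the only delicate points are the degenerate cases built into the definition of degree. When $\Gamma^{\prime}_{m^{\prime}}$ is a singleton while $\Gamma_m$ is not, every ${\rm deg}^{m,\, m^{\prime}}_x(\varphi_{m,\, m^{\prime}})$ is zero, so both ${\rm deg}^{m,\, m^{\prime}}(\varphi_{m,\, m^{\prime}})$ and the degree of the pulled-back divisor vanish and the identity holds trivially; when both are singletons, the convention assigns $\varphi_{m,\, m^{\prime}}$ an arbitrary degree $d$, and the pulled-back divisor then has degree $d \cdot {\rm deg}(D^{\prime})$, again matching the right-hand side. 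The remaining care is purely bookkeeping: confirming that all sums are finite, which follows since $D$ and $D^{\prime}$ have finite support and $\varphi_{m,\, m^{\prime}}$ has finite fibres.
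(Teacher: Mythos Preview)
Your approach is essentially identical to the paper's: the first equality is dismissed as immediate from the definition, and the second is obtained by grouping the defining sum for the pull-back over fibres and invoking the constancy of $\sum_{x \mapsto x'} {\rm deg}^{m,\, m'}_x(\varphi_{m,\, m'})$. Your observation that the second identity should read ${\rm deg}((\varphi_{m,\, m'})^{\ast}(D')) = {\rm deg}^{m,\, m'}(\varphi_{m,\, m'}) \cdot {\rm deg}(D')$ is correct, and in fact the paper's own one-line computation yields exactly this product; the factor ${\rm deg}(D')$ is simply missing from the statement as printed.
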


\begin{proof}
The first equation holds obviously.

Let $x^{\prime}$ be a point on $\Gamma^{\prime}_{m^{\prime}}$.
Since $\sum_{x \mapsto x^{\prime}}((\varphi_{m,\, m^{\prime}})^{\ast}(D^{\prime}))(x) = \sum_{x \mapsto x^{\prime}}{\rm deg}_x^{m,\, m^{\prime}}(\varphi_{m,\, m^{\prime}}) \cdot D^{\prime}(x^{\prime}) = {\rm deg}^{m,\, m^{\prime}}(\varphi_{m,\, m^{\prime}}) \cdot D^{\prime}(x^{\prime})$, we have the second equation.
\end{proof}

\begin{dfn}
\upshape{
Let $\varphi_{m,\, m^{\prime}} : \Gamma_m \rightarrow \Gamma^{\prime}_{m^{\prime}}$ be a finite harmonic morphism of metric graphs with edge-multiplicities.
For a rational function $f$ on $\Gamma_m$ other that $- \infty$, we define the number
\[
{\rm div}^{m,\, m^{\prime}}(f) := \sum_{x \in \Gamma_m} \left( \sum_{x \in e \in E(G)} \frac{m^{\prime}(\varphi_{m,\, m^{\prime}}(e))}{m(e)} \cdot \left(\text{the outgoing slope of } f \text{ on } e \text{ at } x \right) \cdot x \right)
\]
and call it the {\it principal divisor} with edge-multiplicities $m$ and $m^{\prime}$ defined by $f$.
}
\end{dfn}

\begin{prop}
For any rational functions $f$ on $\Gamma_m$ and $f^{\prime}$ on $\Gamma^{\prime}_{m^{\prime}}$ both other than $- \infty$, $(\varphi_{m,\, m^{\prime}})_{\ast}({\rm div}^{m,\, m^{\prime}} f ) = {\rm div}((\varphi_{m,\, m^{\prime}})_{\ast}(f))$ and $(\varphi_{m,\, m^{\prime}})({\rm div}(f^{\prime})) = {\rm div}^{m,\, m^{\prime}}((\varphi_{m,\, m^{\prime}})^{\ast}f^{\prime})$ hold.
\end{prop}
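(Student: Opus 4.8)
Throughout write $\varphi := \varphi_{m,\,m^{\prime}}$. The plan is to establish both identities by comparing the coefficients of the two divisors point by point, reducing each comparison to the harmonicity identity
\[
{\rm deg}^{m,\,m^{\prime}}_x(\varphi) = \sum_{x \in h \mapsto h^{\prime}} \frac{m^{\prime}(\varphi(e))}{m(e)} \cdot {\rm deg}_h(\varphi),
\]
which is the same for every half-edge $h^{\prime}$ emanating from $\varphi(x)$. For a half-edge $h$ at a point $x$ lying in an edge $e$, I abbreviate $c_h := m^{\prime}(\varphi(e))/m(e)$, a positive integer by the divisibility built into harmonicity, and I let $s_{f,h}(x)$ denote the outgoing slope of $f$ along $h$. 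I read the coefficient of $x$ in ${\rm div}^{m,\,m^{\prime}}(f)$ as $\sum_{h \text{ at } x} c_h\, s_{f,h}(x)$, the half-edge refinement of the usual ${\rm ord}_x$; this vanishes at smooth interior points, so refining the model to make every non-smooth point a vertex is harmless.

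For the pull-back identity I start from $\varphi^{\ast}f^{\prime} = f^{\prime}\circ\varphi$. Since $\varphi$ dilates each half-edge $h$ by the integer ${\rm deg}_h(\varphi)$ onto its image $h^{\prime}$, one has $s_{\varphi^{\ast}f^{\prime},\,h}(x) = {\rm deg}_h(\varphi)\cdot s_{f^{\prime},\,h^{\prime}}(\varphi(x))$. Substituting this into the coefficient of $x$ in ${\rm div}^{m,\,m^{\prime}}(\varphi^{\ast}f^{\prime})$ and grouping the half-edges $h$ at $x$ according to their image $h^{\prime}$, the inner sum $\sum_{h \mapsto h^{\prime}} c_h\,{\rm deg}_h(\varphi)$ is exactly ${\rm deg}^{m,\,m^{\prime}}_x(\varphi)$ and independent of $h^{\prime}$; hence the coefficient equals ${\rm deg}^{m,\,m^{\prime}}_x(\varphi)\cdot\sum_{h^{\prime}} s_{f^{\prime},\,h^{\prime}}(\varphi(x)) = {\rm deg}^{m,\,m^{\prime}}_x(\varphi)\cdot {\rm ord}_{\varphi(x)}(f^{\prime})$, which is precisely the coefficient of $x$ in $\varphi^{\ast}({\rm div}(f^{\prime}))$.

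For the push-forward identity the work is in computing the outgoing slope of $\varphi_{\ast}f$ along a half-edge $h^{\prime}$ at a point $x^{\prime}$. I parametrize the fibre over a point at parameter $\varepsilon$ along $h^{\prime}$: for each preimage $x$ of $x^{\prime}$ and each half-edge $h$ at $x$ with $\varphi(h)=h^{\prime}$ there is exactly one nearby preimage, at distance $\varepsilon/{\rm deg}_h(\varphi)$ along $h$, carrying local degree $c_h\,{\rm deg}_h(\varphi)$. Forming the difference quotient of $\varphi_{\ast}f$, the factor ${\rm deg}_h(\varphi)$ from the local degree cancels the $1/{\rm deg}_h(\varphi)$ from the displacement, leaving $s_{\varphi_{\ast}f,\,h^{\prime}}(x^{\prime}) = \sum_{x \mapsto x^{\prime}}\sum_{h \mapsto h^{\prime}} c_h\,s_{f,h}(x)$. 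Summing over all half-edges $h^{\prime}$ at $x^{\prime}$ and using that every half-edge at a preimage $x$ maps to some $h^{\prime}$, the double sum reorganizes into $\sum_{x \mapsto x^{\prime}}\sum_{h \text{ at } x} c_h\,s_{f,h}(x) = \sum_{x \mapsto x^{\prime}}({\rm div}^{m,\,m^{\prime}}f)(x)$, which is the coefficient of $x^{\prime}$ in $\varphi_{\ast}({\rm div}^{m,\,m^{\prime}}f)$.

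I expect the push-forward identity to be the main obstacle, since, unlike the pull-back case, it requires controlling how the whole fibre moves and in particular the exact cancellation between the dilation factor ${\rm deg}_h(\varphi)={\rm deg}_e(\varphi)$ entering the local degree and the reciprocal displacement rate along $h$; this is where I genuinely use that $\varphi$ is a morphism (an integer dilation on each edge) and is finite, so that every target half-edge is actually reached. The remaining care is bookkeeping at preimages that are vertices of $G$, where several half-edges may share the image $h^{\prime}$ and must all be counted, together with the treatment of points at infinity, which is handled by the slope-at-infinity convention of Subsection $2.3$ exactly as in the multiplicity-free case.
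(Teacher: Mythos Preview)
Your argument is correct and follows essentially the same route as the paper's proof: a direct point-by-point comparison of divisor coefficients, carried out by the paper via a common refinement into linear segments and the difference quotients $(f(y)-f(x))/l(s)$, and by you via half-edges and outgoing slopes. The only substantive remark is that in your push-forward computation harmonicity is used just as essentially as in the pull-back (it is what lets you rewrite ${\rm deg}^{m,m'}_x(\varphi)\,f(x)$ as $\sum_{h\mapsto h'} c_h\,{\rm deg}_h(\varphi)\,f(x)$ so that the difference quotient telescopes term by term); you invoke it implicitly in ``forming the difference quotient'' but single out only the morphism and finiteness hypotheses in your commentary.
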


\begin{proof}
Let us write $\varphi_{m,\, m^{\prime}}$ as $\varphi$ simply.
We may break $\Gamma_m$ and $\Gamma^{\prime}_{m^{\prime}}$ into sets $S$ and $S^{\prime}$ of segments along which $f$ and $\varphi_{\ast}f$, respectively, are linear and such that each segment $s \in S$ is mapped linearly to some $s^{\prime} \in S^{\prime}$.
Then at any point $x^{\prime}$ on $\Gamma^{\prime}_m$, we have
\begin{eqnarray*}
\varphi_{\ast}({\rm div}^{m,\, m^{\prime}} (f) ) (x^{\prime}) = \sum_{\substack{ x \in \Gamma_m \\ x \mapsto x^{\prime}}} {\rm div}^{m,\, m^{\prime}}(f) (x) = \sum_{x \in \varphi^{-1}(x^{\prime})} \sum_{s=xy \in S} \frac{m^{\prime}(\varphi(s))}{m(s)} \cdot \frac{f(y) - f(x)}{l(s)}
\end{eqnarray*}
and
\begin{eqnarray*}
&&{\rm div}(\varphi_{\ast} f)(x^{\prime})\\
&=& \sum_{s^{\prime} = x^{\prime} y^{\prime} \in S^{\prime}} \frac{(\varphi_{\ast} f) (y^{\prime}) - (\varphi_{\ast} f) (x^{\prime})} {l^{\prime}(s^{\prime})}\\
&=& \sum_{s^{\prime} = x^{\prime} y^{\prime} \in S^{\prime}} \left\{ \sum_{\substack{ y \in \Gamma_m \\ \varphi(y) = y^{\prime}}} \left( \sum_{ y \in s \mapsto s^{\prime}} \frac{m^{\prime}(s^{\prime})}{m(s)} \cdot \frac{l^{\prime}(s^{\prime})}{l(s)} \right) f(y) - \sum_{\substack{ x \in \Gamma_m \\ \varphi(x) = x^{\prime}}} \left( \sum_{ x \in s \mapsto s^{\prime}} \frac{m^{\prime}(s^{\prime})}{m(s)} \cdot \frac{l^{\prime}(s^{\prime})}{l(s)} \right) f(x) \right\} \cdot \frac{1}{ l^{\prime}(s^{\prime})}\\
&=& \sum_{s^{\prime} = x^{\prime} y^{\prime} \in S^{\prime}} \left\{ \sum_{\substack{ y \in \Gamma_m \\ \varphi(y) = y^{\prime}}} \left( \sum_{ y \in s \mapsto s^{\prime}} \frac{m^{\prime}(s^{\prime})}{m(s)} \cdot \frac{1}{l(s)} \right) f(y) - \sum_{\substack{ x \in \Gamma_m \\ \varphi(x) = x^{\prime}}} \left( \sum_{ x \in s \mapsto s^{\prime}} \frac{m^{\prime}(s^{\prime})}{m(s)} \cdot \frac{1}{l(s)} \right) f(x) \right\}\\
&=& \sum_{s^{\prime} = x^{\prime} y^{\prime} \in S^{\prime}} \left\{ \sum_{\substack{ s = xy \in S \\ \varphi(s) = s^{\prime}}} \left( \frac{m^{\prime}(s^{\prime})}{m(s)} \cdot \frac{f(y)}{l(s)} - \frac{m^{\prime}(s^{\prime})}{m(s)} \cdot \frac{f(x)}{l(s)} \right) \right\}\\
&=& \sum_{x \in \varphi^{-1}(x^{\prime})} \sum_{s=xy \in S} \frac{m^{\prime}(\varphi(s))}{m(s)} \cdot \frac{f(y) - f(x)}{l(s)}.
\end{eqnarray*}

Let us assume that $\Gamma_m$ and $\Gamma^{\prime}_{m^{\prime}}$ are broken into $S_1$ and $S^{\prime}_1$ of segments along which $\varphi^{\ast}f^{\prime}$ and $f^{\prime}$, respectively, have the same conditions as that of $S$ and $S^{\prime}$.
Then for any point $x$ on $\Gamma_m$, we have
\begin{eqnarray*}
\left( \varphi^{\ast}({\rm div}(f^{\prime})) \right)(x) &=& {\rm deg}^{m,\, m^{\prime}}_x (\varphi_{m,\, m^{\prime}}) \cdot ({\rm div}(f^{\prime})(\varphi(x)))\\
&=& {\rm deg}^{m,\, m^{\prime}}_x (\varphi_{m,\, m^{\prime}}) \cdot \left( \sum_{s^{\prime} = \varphi(x) y^{\prime} \in S^{\prime}} \frac{f^{\prime}(y^{\prime}) - f^{\prime}(\varphi(x))} {l^{\prime}(s^{\prime})} \right)\\
&=& \sum_{s^{\prime} = \varphi(x) y^{\prime} \in S^{\prime}} {\rm deg}^{m,\, m^{\prime}}_x (\varphi_{m,\, m^{\prime}}) \cdot \frac{f^{\prime}(y^{\prime}) - f^{\prime}(\varphi(x))} {l^{\prime}(s^{\prime})}\\
&=& \sum_{s^{\prime} = \varphi(x) y^{\prime} \in S^{\prime}} \sum_{s = xy \mapsto s^{\prime}}\frac{m^{\prime}(s^{\prime})}{m(s)} \cdot \frac{l^{\prime}(s^{\prime})}{l(s)} \cdot \frac{f^{\prime}(y^{\prime}) - f^{\prime}(\varphi(x))} {l^{\prime}(s^{\prime})}\\
&=& \sum_{s^{\prime} = \varphi(x) y^{\prime} \in S^{\prime}} \sum_{s = xy \mapsto s^{\prime}}\frac{m^{\prime}(s^{\prime})}{m(s)} \cdot \frac{f^{\prime}(y^{\prime}) - f^{\prime}(\varphi(x))} {l(s)}\\
&=& \sum_{s = xy \in S}\frac{m^{\prime}(s^{\prime})}{m(s)} \cdot \frac{f^{\prime}(\varphi(y)) - f^{\prime}(\varphi(x))} {l(s)}\\
&=& \sum_{s = xy \in S}\frac{m^{\prime}(s^{\prime})}{m(s)} \cdot \frac{(\varphi^{\ast}f^{\prime})(y) - (\varphi^{\ast}f^{\prime})(x)} {l(s)}\\
&=& ({\rm div}^{m,\, m^{\prime}}(\varphi^{\ast}(f^{\prime})))(x). \qedhere
\end{eqnarray*}
\end{proof}







\begin{dfn}
\upshape{
Let $\varphi_{m,\, m^{\prime}} : \Gamma_m \rightarrow \Gamma^{\prime}_{m^{\prime}}$ be a map between metric graphs with edge-multiplicities $m$ and $m^{\prime}$ and let $K$ be a finite group.
$\varphi_{m,\, m^{\prime}}$ is a {\it $K$-Galois covering} on $\Gamma^{\prime}_{m^{\prime}}$ if $\varphi_{m,\, m^{\prime}}$ is a finite harmonic morphism of metric graphs with edge-multiplicities, $|K| = {\rm deg}^{m,\, m^{\prime}}(\varphi_{m,\, m^{\prime}})$ and $K$ acts on transitively on every fibre and $K$ keeps edge-multiplicities.
}
\end{dfn}

\begin{rem}
\upshape{
If $\varphi_{m,\, m^{\prime}} : \Gamma_m \rightarrow \Gamma^{\prime}_{m^{\prime}}$ is $K$-Galois, then there exists a group homomorphism $K \rightarrow {\rm Isom}_{(G, l, m)}$ for a model $(G, l)$ of $\Gamma_m$.
}
\end{rem}

\end{document}